\documentclass[12pt]{amsart}
\usepackage[utf8]{inputenc}
\usepackage{amsmath}
\usepackage{mathtools}
\usepackage{amsthm}
\usepackage{euscript, mathrsfs}
\usepackage{fullpage}
\usepackage{xcolor}
\usepackage{array}
\usepackage{url}
\numberwithin{equation}{section}
\usepackage{marginnote}
\usepackage[shortlabels]{enumitem}
\usepackage{appendix}
\usepackage{lscape}
\usepackage{afterpage}

\renewcommand{\arraystretch}{0.8}

\newtheorem{thm}{Theorem}[section]
\newtheorem{cor}[thm]{Corollary}
\newtheorem{lem}[thm]{Lemma}
\newtheorem{prop}[thm]{Proposition}
\theoremstyle{definition}

\newtheorem{defn}[thm]{Definition}
\newtheorem{rmk}[thm]{Remark}

\newtheorem{conj}[thm]{Conjecture}

\def\nvol{\operatorname{nvol}}

\def\tes{\operatorname{Tes}}

\def\ptes{\operatorname{PTes}}

\def\fcone{\operatorname{fcone}}

\def\ncone{\operatorname{ncone}}

\def\lin{\operatorname{lin}}
\def\diag{\operatorname{diag}}

\def\1{{\boldsymbol{1}}}
\def\0{{\boldsymbol{0}}}
\def\ba{{\boldsymbol{a}}}
\def\bb{{\boldsymbol{b}}}

\def\be{{\boldsymbol{e}}}

\def\bv{{\boldsymbol{v}}}
\def\bw{{\boldsymbol{w}}}

\def\bp{\boldsymbol{p}}
\def\bd{\boldsymbol{d}}
\def\br{\boldsymbol{r}}

\def\bx{\boldsymbol{x}}

\def\bp{\boldsymbol{p}}
\def\bn{\boldsymbol{n}}
\def\by{\boldsymbol{y}}

\def\bu{\boldsymbol{u}}
\def\bp{\boldsymbol{p}}

\def\bvalpha{\operatorname{\alpha^{BV}}}
\def\Hs{\boldsymbol{\eta}}
\def\hs{\eta}

\def\sm{\boldsymbol{\operatorname{\mathit{s}}}}
\def\MM{{\rm{M}}}
\def\XX{{\rm{X}}}
\def\YY{{\rm{Y}}}

\def\U{\operatorname{\mathbb{U}}}
\def\Z{\operatorname{\mathbb{Z}}}
\def\R{\operatorname{\mathbb{R}}}

\def\H{\mathbb{H}}

\makeatletter
\renewcommand*\env@matrix[1][\arraystretch]{%
  \edef\arraystretch{#1}%
  \hskip -\arraycolsep
  \let\@ifnextchar\new@ifnextchar
  \array{*\c@MaxMatrixCols c}}
\makeatother

\title{Ehrhart positivity of Tesler polytopes and Berline-Vergne's valuation}
\author{Yonggyu Lee and Fu Liu}
\date{\today}

\address{Yonggyu Lee, Department of Mathematics, University of California, Davis, One Shields Avenue, Davis, CA 95616 USA.}
\email{ygulee@ucdavis.edu}
\address{Fu Liu, Department of Mathematics, University of California, Davis, One Shields Avenue, Davis, CA 95616 USA.}
\email{fuliu@math.ucdavis.edu}
\begin{document}

\maketitle
\begin{abstract}

  For $\ba \in \R_{\geq 0}^{n}$, the Tesler polytope $\tes_{n}(\ba)$ is the set of upper triangular matrices with non-negative entries whose hook sum vector is $\ba$. 
Motivated by a conjecture of Morales', we study the questions of whether the coefficients of the Ehrhart polynomial of $\tes_n(1,1,\dots,1)$ are positive. 
We attack this problem by studying a certain function constructed by Berline-Vergne and its values on faces of a unimodularly equivalent copy of $\tes_n(1,1,\dots,1).$ 
We develop a method of obtaining the dot products appeared in formulas for computing Berline-Vergne's function directly from facet normal vectors. Using this method together with known formulas, we are able to show Berline-Vergne's function has positive values on codimension $2$ and $3$ faces of the polytopes we consider. As a consequence, we prove that the $3$rd and $4$th coefficients of the Ehrhart polynomial of $\tes_{n}(1,\dots,1)$ are positive. Using the Reduction Theorem by Castillo and the second author, we generalize the above result to all deformations of $\tes_{n}(1,\dots,1)$ including all the integral Tesler polytopes.
\end{abstract}

\section{Introduction}
A subset $P$ of $\R^{m}$ is a \emph{polyhedron} if it is the intersection of finitely many half spaces, usually defined by linear inequalities. A \emph{polytope} is a bounded polyhedron. Equivalently, a polytope $P \subset \R^{m}$ may be defined as the convex hull of finitely many points in $\R^{m}$. We assume that readers are familiar with the basic concepts related to polytopes, such as \emph{face} and \emph{dimension}, presented in \cite{barvinok2008integer, ziegler2012lectures}. 

A polytope $P \subset \R^{m}$ is called \emph{integral} if all of its vertices are integer points, i.e., points in $\Z^m$. In 1962, Ehrhart discovered that for any integral polytope $P$ of dimension $d$, the function $E_{P}$ which maps any non-negative integer $t \in \Z_{\geq 0}$ to the number of integer points in $tP$ (the $t$-th dilate of $P$) is a polynomial in $t$ of degree $d$. We call $E_{P}$ the \emph{Ehrhart polynomial} of $P$. For each $1 \le i \le d,$ let $e_i(P)$ be the coefficient of $t^i$ in $E_P(t),$ so 
\[E_P(t)=e_{d}(P)t^{d}+e_{d-1}(P)t^{d-1}+\cdots+ e_0(P).\]
In \cite{ehrhart1967probleme}, Ehrhart showed that for any $d$-dimensional integral polytope $P,$ the leading coefficient $e_{d}(P)$ of the Ehrhart polynomial $E_P(t)$ is the normalized volume of $P$, the second coefficient $e_{d-1}(P)$ is one half of the sum of the normalized volumes of the facets of $P$, and the constant term $e_0(P)$ is $1$. Thus, these three coefficients are always positive. However, the remaining coefficients of $E_P(t)$ are not always positive. We say an integral polytope $P$ is \emph{Ehrhart positive}, if all the coefficients of $E_P(t)$ are positive. (See \cite{liu2019positivity} for a survey on Ehrhart positivity.)

An affine transformation from an affine space $A \subseteq \R^{m}$ to an affine space $B \subseteq \R^{l}$ is a \emph{unimodular transformation} if it induces a bijection from integer points in $A$ to integer points in $B$. Two polytopes $P \subset \R^{m}$ and $Q \subset \R^{l}$ are said to be \emph{unimodularly equivalent} if there exists a unimodular transformation $\phi$ from 
an affine space containing $P$ to an affine space containing $Q$ such that $\phi(P)=Q$. Clearly, if two polytopes are unimodularly equivalent, they have the same Ehrhart polynomial.

\subsection{Tesler polytopes and Morales' conjecture} 

For any $\ba=(a_1,\dots,a_n) \in \R_{\geq 0}^{n}$, the \emph{Tesler polytope of hook sum $\ba$}, denoted by $\tes_{n}(\ba)$, is the set of all $n \times n$ upper triangular matrices with non-negative entries such that its ``hook sum vector'' is $\ba.$ (See Definition \ref{defn:tesler} for a complete definition.)
When $\ba = \boldsymbol{1}:=(1,\dots,1) \in \R^n,$ the integer points of $\tes_{n}(\ba)$ are called Tesler matrices, which were initially introduced by Tesler,
and then rediscovered by Haglund in his work of expressing the diagonal Hilbert series as a weighted sum over these matrices \cite{haglund2011polynomial}. Consequently, Tesler matrices play an important role in the field of diagonal harmonics \cite{armstrong2012combinatorics, garsia2014constant, gorsky2015refined, haglund2018delta, wilson2017weighted}.
Intrigued by these work, M\'{e}sz\'{a}ros, Morales and Rhoades \cite{Meszaros2017} defined and studied the Tesler polytopes of hook sum $\ba$ defined as above for $\ba \in \Z_{>0}^n$. In this paper, we extend the domain of $\ba$ to $\R_{\geq 0}^{n}$, noticing that important features remain the same.
The work in this paper was initially motivated by a conjecture of Morales':

\begin{conj} \cite{Moralesconjecture} \label{conj}
Tesler polytopes $\tes_{n}(\boldsymbol{1})$ and $\tes_{n}(1,0,\dots, 0)$ are both Ehrhart positive for any positive integer $n$.
\end{conj}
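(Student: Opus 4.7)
The plan is to attack Conjecture~\ref{conj} via Berline--Vergne's construction, which furnishes, for any integral polytope $P\subset\R^m$, a function $\alpha(\cdot,P)$ on the face lattice of $P$ satisfying
\[
 e_i(P)=\sum_{\substack{F\text{ face of }P\\ \dim F=i}}\alpha(F,P)\,\nvol(F).
\]
Since $\nvol(F)>0$, it suffices to show $\alpha(F,\tes_n(\1))>0$ for every face $F$ in order to conclude Ehrhart positivity, and the conjecture is thereby reduced to a pointwise positivity statement about the BV valuation on the face lattice of $\tes_n(\1)$ (and similarly of $\tes_n(1,0,\dots,0)$).

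The evaluation of $\alpha(F,P)$ is governed by data attached to the tangent cone of $F$, and the known formulas become tractable only when the ambient lattice, the facet normals, and the affine spans of faces admit a uniform integer description. The natural realization of $\tes_n(\1)$ as a slice of the space of upper triangular matrices lies in an affine subspace with a somewhat awkward induced lattice, so my first step is to replace $\tes_n(\1)$ by a unimodularly equivalent copy in $\R^{d}$ whose facet-defining inequalities have a combinatorially transparent form. With such a presentation in hand, I would develop a systematic procedure for reading the dot products that appear in the Berline--Vergne formulas directly off the facet normals, bypassing the need to redo the linear algebra for every face individually.

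I would then proceed by codimension. In codimensions $0$ and $1$ (and for the constant term) the positivity of $\alpha(F,P)$ is classical, recovering the well-known positivity of $e_d$, $e_{d-1}$, and $e_0$. For codimensions $2$ and $3$, explicit closed-form expressions for $\alpha(F,P)$ are available in the literature; substituting my facet normals into these formulas, I expect the positivity of $\alpha(F,\tes_n(\1))$ on each such face to reduce to a combinatorial inequality that can be verified from the hook-sum structure. This would establish positivity of $e_3$ and $e_4$ for $\tes_n(\1)$ and yield the first substantive partial evidence for Conjecture~\ref{conj}.

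Finally, to transfer the conclusion from $\tes_n(\1)$ to its deformations, including $\tes_n(1,0,\dots,0)$ and every integral Tesler polytope $\tes_n(\ba)$, I would invoke the Reduction Theorem of Castillo and the second author: the value $\alpha(F,P)$ depends only on the normal cone of $F$, so positivity propagates through the shared normal fan. The main obstacle I anticipate is the codimension-$3$ case: the existing BV formula involves a sum of rational terms that is not manifestly positive, and I expect to have to exploit finer symmetry and sign properties of the Tesler facet normals to force the necessary cancellations. Pushing this analysis to higher codimension, which would be required for the full conjecture, appears to be beyond the reach of currently available formulas and constitutes the genuine barrier to a complete resolution.
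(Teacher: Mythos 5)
Your proposal is not a proof of Conjecture~\ref{conj}, and you concede as much in your final sentence; the same is true of the paper, which states this as an open conjecture and proves only partial results toward it. Your roadmap coincides with the paper's actual strategy: pass to a unimodularly equivalent, full-dimensional ``projected'' copy of $\tes_n(\1)$ with transparent facet normals, read off the dot products in the Berline--Vergne formulas directly from those normals (the paper implements this via the fact that the matrix of dot products of the normal-cone generators and that of the pointed-feasible-cone generators are inverse to one another, Corollary~\ref{CMinv}), substitute into the closed-form codimension-$2$ and codimension-$3$ formulas of Lemmas~\ref{cd2} and~\ref{cd3}, and transfer positivity to $\tes_n(1,0,\dots,0)$ and all integral $\tes_n(\ba)$ via the Reduction Theorem. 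One caveat on that last step: $\tes_n(1,0,\dots,0)$ does \emph{not} share the normal fan of $\tes_n(\1)$ (its normal fan is only a coarsening), so the transfer genuinely relies on the deformation hypothesis of Theorem~\ref{reduction} rather than on an identity of normal fans as your phrasing suggests.

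The genuine gap, which you correctly identify yourself, is that this machinery only reaches faces of codimension $2$ and $3$, hence only the coefficients $e_{d-2}$ and $e_{d-3}$ (in the paper's indexing $e_i(P)=[t^i]E_P(t)$ these are not $e_3$ and $e_4$ as you write, but the third and fourth coefficients counted from the top), alongside the classical positivity of $e_d$, $e_{d-1}$ and $e_0$. No usable formula for $\bvalpha$ on faces of codimension at least $4$ is currently available, and since BV-$\alpha$-positivity is only a sufficient condition for Ehrhart positivity, the approach cannot even in principle be pushed to a disproof. The full conjecture therefore remains open after both your argument and the paper's; what you have outlined is a correct derivation of Theorem~\ref{ptes-positive} and Corollaries~\ref{teslerdef-positiv} and~\ref{cor:tesler}, not of the statement as posed.
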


We remark that both families of polytopes in Morales' conjecture are fascinating objects both for their own interesting combinatorial properties and for their connection to other fields of mathematics.
We have already discussed above the importance of $\tes_{n}(\boldsymbol{1})$ in the study of diagonal harmonics. 
Meanwhile, it is known \cite{corteel2017volumes} that $\tes_{n}(1,0,\dots,0)$ is unimodularly equivalent to the Chan-Robbins-Yuen (CRY) polytope, whose volume is the product of the first $n-2$ Catalan numbers \cite{chan2000volume, zeilberger1999proof}. As of today, still no simple proofs for this surprising result is known.
Furthermore, the CRY polytope is a face of the Birkhoff polyope which is also a well-studied subject. In particular, computing volumes of Birkhoff polytopes is an extremely hard problem and has attracted a lot of recent research \cite{canfield2007asymptotic, de2009generating, pak2000four}. 

\subsection{McMullen's formula and $\alpha$-positivity} In this paper, we will use a technique developed by Castillo and the 2nd author \cite{castillo2018berline} to attack Morales' positivity conjecture and its generalizations. The technique is based on the existence of ``McMullen's formula''. 
In 1975, Danilov questioned, in the context of toric varieties, whether it is possible to construct a function $\alpha$ such that for any integral polytope $P \subset \R^{m}$, the following equation holds 
\[|P\cap \mathbb{Z}^{m}|=\sum_{F~:\text{ a non-empty face of }P} \alpha(F,P) \ \nvol(F),\]
where $\nvol(F)$ is the normalized volume of $F$ and $\alpha(F,P)$ only depends on the normal cone of $P$ at $F$ \cite{danilov1978geometry}. 
McMullen was the first to confirm that it is possible to construct such a function $\alpha$ (in a non-constructive way). Hence, we refer to the above formula as \emph{McMullen's formula} \cite{mcmullen1993valuations}. Since $\alpha$ only depends on normal cones and normal cones are invariant under dilations, we obtain the following expression for $e_i(P)$, the coefficient of $t_i$ in $E_P(t),$ as a weighted sum of $\alpha(F,P):$ 
\begin{equation} \label{maccoeffi}
    e_{i}(P)=\sum_{F~:~i\text{-dimensional face of }P}\alpha(F,P)\nvol(F), \text{ for any }0 \leq i \leq \dim(P).
\end{equation}
One sees that, as a consequence of the above formula, if $\alpha(F,P)>0$ for every $i$-dimensional face $F$ of $P$, then $e_{i}(P)$ is positive. Moreover, if $\alpha(F,P)>0$ for every face of $P$, then $P$ is Ehrhart positive. We say a polytope $P$ is \emph{$\alpha$-positive} if $\alpha(F,P)>0$ for every face $F$ of $P$. 

One cannot discuss $\alpha$-positivity without fixing a construction of the $\alpha$ function for McMullen's formula.
Currently, at least three different constructions for $\alpha$ are known; they are given by Pommersheim and Thomas \cite{pommersheim2004cycles}, by Berline and Vergne \cite{berline2007local}, and by Ring and Sch{\"u}rmann \cite{ring2017local}. Following \cite{castillo2018berline}, we will use Berline-Vergne's construction for the $\alpha$ function, and for simplicity we refer to their construction as the \emph{BV-$\alpha$ function}, denoted by $\bvalpha$.

Based on the discussion above, one sees that Ehrhart positivity can be studied through $\alpha$-positivity or more specifically through BV-$\alpha$-positivity, which is the approach we will take. To simplify the calculation of BV-$\alpha$ values, we work on the ``projected Tesler polytope'' $\ptes_{n}(\ba)$, which is unimodularly equivalent to $\tes_{n}(\ba)$ and thus has the same Ehrhart polynomial as that of $\tes_n(\ba).$ (See \S \ref{subsec:ptes} for the definition of $\ptes_{n}(\ba)$.) Therefore, if $\ptes_{n}(\ba)$ is BV-$\alpha$-positive then $\tes_{n}(\ba)$ is Ehrhart positive. The following is one of our main results: 

\begin{thm} \label{ptes-positive}
  Let $n \ge 3$ be a positive integer. Then $\bvalpha(F, \ptes_n(\1))$ is positive for all the codimension $2$ and $3$ faces $F$ of $\ptes_{n}(\boldsymbol{1})$. Therefore, $e_{d-2}(\ptes_{n}(\boldsymbol{1}))$ and $e_{d-3}(\ptes_{n}(\boldsymbol{1}))$ are positive, where $d=\binom{n}{2}$ is the dimension of $\ptes_{n}(\boldsymbol{1})$.
\end{thm}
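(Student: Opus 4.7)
The plan is to exploit \eqref{maccoeffi} and establish $\bvalpha$-positivity face by face. Since $\bvalpha(F, P)$ depends only on the normal cone of $P$ at $F$, and a codimension $k$ face of $P$ has a $k$-dimensional normal cone, the problem reduces to showing that $\bvalpha$ takes positive values on a finite list of $2$- and $3$-dimensional cones, namely those arising as normal cones at codimension $2$ and $3$ faces of $\ptes_n(\1)$.

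The first step is to set up the facet data of $\ptes_n(\1)$. Starting from its defining inequalities one enumerates all facets and records a primitive outer normal vector $u_i$ for each. With the facet normals in hand, the method advertised in the abstract should yield the pairwise dot products $\langle u_i, u_j\rangle$ that feed into the BV-$\alpha$ formulas in a form adapted to the combinatorics of $\ptes_n(\1)$. Next one classifies up to combinatorial type the codimension $2$ and $3$ faces, equivalently the subsets of facets whose intersection is a face of the corresponding codimension.

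For codimension $2$, the normal cone is always a $2$-dimensional simplicial cone spanned by two facet normals $u_i, u_j$, and there is an explicit closed formula for $\bvalpha$ on such a cone (due to Berline-Vergne) depending only on $\langle u_i, u_i\rangle$, $\langle u_j, u_j\rangle$, and $\langle u_i, u_j\rangle$. Substituting the dot product data into this formula reduces the positivity claim, in each combinatorial type, to an elementary inequality that can be verified directly.

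For codimension $3$ the situation is more delicate. When exactly three facets meet at the face, its normal cone is a $3$-dimensional simplicial cone and one applies the known Berline-Vergne formula for such cones. When more than three facets meet, one must first reduce to the simplicial case, for instance by triangulating the normal cone and using the valuation property of $\bvalpha$ on cones. In either case the value of $\bvalpha$ at the face is eventually expressed as an explicit rational function of pairwise dot products, which is then verified to be positive. I expect this last step to be the main obstacle: the $3$-dimensional BV-$\alpha$ formula is considerably more intricate than the $2$-dimensional one, and the list of combinatorial types of codimension $3$ faces is larger, so the case analysis together with the bookkeeping of dot products forms the principal technical challenge. Once $\bvalpha(F, \ptes_n(\1))>0$ is established on all codimension $2$ and $3$ faces, the positivity of $e_{d-2}(\ptes_n(\1))$ and $e_{d-3}(\ptes_n(\1))$ follows immediately from \eqref{maccoeffi} since $\nvol(F)>0$ for every face $F$.
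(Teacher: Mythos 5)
Your overall architecture matches the paper's: classify the codimension $2$ and $3$ faces of $\ptes_n(\1)$ by combinatorial type, feed dot products into the known Berline--Vergne formulas for $2$- and $3$-dimensional cones, and conclude via \eqref{maccoeffi}. But there is a genuine gap at the central computational step. The formulas in Lemmas \ref{cd2} and \ref{cd3} take as input the primitive generators of the \emph{pointed feasible cone} $\fcone^p(F,P)$, which must be unimodular with respect to the quotient lattice $\Z^m/\lin(F)$ --- not the facet normals generating $\ncone(F,P)$. These two cones are polar to one another (Lemma \ref{lem:nfcone}), and their dot products are genuinely different: e.g.\ for the face in case (1)/(i) of Lemma \ref{alphacd2}, the normal cone has Gram matrix $\left(\begin{smallmatrix} n-1 & -1 \\ -1 & 1\end{smallmatrix}\right)$ while the pointed feasible cone has Gram matrix $\frac{1}{n-2}\left(\begin{smallmatrix} 1 & 1 \\ 1 & n-1\end{smallmatrix}\right)$; plugging the former into the codimension-$2$ formula yields $\frac14-\frac{1}{12}\cdot\frac{n}{n-1}$ instead of the correct $\frac14+\frac{1}{12}\cdot\frac{n}{n-1}$. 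Your proposal asserts the formula ``depend[s] only on'' the dot products of the facet normals spanning the normal cone, which conflates the two cones; and the gesture toward ``the method advertised in the abstract'' does not supply the missing bridge. That bridge is precisely the paper's main technical contribution: for a totally unimodular, full-dimensional polytope, the matrix of dot products of the primitive normal-cone generators and that of the primitive feasible-cone generators are inverse to one another (Lemma \ref{innerprod} and Corollary \ref{CMinv}). Without this (or an explicit, careful computation of primitive edge directions projected modulo $\lin(F)$ and verified to be primitive in the quotient lattice), the case-by-case verification cannot be carried out correctly.

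Two smaller points. First, your worry about codimension $3$ faces where ``more than three facets meet'' is moot: $\ptes_n(\1)$ is totally unimodular (Proposition \ref{prop:ptes}), hence simple, so every codimension $k$ face has exactly $k$ supporting facets and every relevant cone is simplicial. Second, the hypothesis in Lemmas \ref{cd2} and \ref{cd3} that $\fcone^p(F,P)$ is \emph{unimodular} with respect to $\Z^m/\lin(F)$ is not automatic and must be justified; in the paper it again follows from total unimodularity (Remark \ref{rmk:innerprod}). Your proposal never invokes this property, so even the applicability of the formulas is left unaddressed.
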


One benefit of discussing BV-$\alpha$-positivity is that via the Reduction Theorem (Theorem \ref{reduction}) established by Castillo and the second author, one can prove Ehrhart positivity of all deformations of a polytope $P$ by proving that it is BV-$\alpha$-positive. (See Definition \ref{combisoeq} for the definition of deformations.) 
Applying the Reduction theorem together with the result that any invertible affine transformation preserves deformations (Lemma \ref{unimodulardef}), we obtain the following result:

\begin{cor} \label{teslerdef-positiv} 
Let $P$ be a $d$-dimensional integral polytope that is a deformation of $\tes_{n}(\boldsymbol{1})$. Then the following statements are true:
\begin{enumerate}[label={\rm (\arabic*)}]
  \item If $d = \binom{n}{2},$ i.e., $\dim(P)=\dim(\tes_{n}(\boldsymbol{1}))$, then $e_{d-2}(P)>0$ and $e_{d-3}(P)>0$. 
  \item If $d = \binom{n}{2}-1,$ i.e., $\dim(P)=\dim(\tes_{n}(\boldsymbol{1}))-1$, then $e_{d-2}(P)>0$.
\end{enumerate}
\end{cor}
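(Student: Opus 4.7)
The plan is to deduce Corollary~\ref{teslerdef-positiv} from Theorem~\ref{ptes-positive} by combining the Reduction Theorem (Theorem~\ref{reduction}) with the fact that invertible affine transformations preserve deformations (Lemma~\ref{unimodulardef}).

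First, I would reduce the statement for $\tes_n(\1)$ to one for $\ptes_n(\1)$. Since $\ptes_n(\1)$ is unimodularly equivalent to $\tes_n(\1)$, Lemma~\ref{unimodulardef} gives a bijection between the deformations of these two polytopes via the same unimodular map. Because unimodular equivalence preserves Ehrhart polynomials and hence every coefficient $e_i$, it suffices to prove both statements (1) and (2) for integral polytopes $P$ that are deformations of $\ptes_n(\1)$.

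Next, I would apply the Reduction Theorem. Roughly, for a polytope $R$ and a deformation $Q$ of $R$, this theorem expresses $e_i(Q)$ as a non-negative linear combination of the BV-$\alpha$ values $\bvalpha(F, R)$, where $F$ ranges over faces of $R$ of an appropriate codimension determined by $i$ and $\dim Q$. For case (1), $\dim P = \dim \ptes_n(\1) = d = \binom{n}{2}$, and the coefficients $e_{d-2}(P)$ and $e_{d-3}(P)$ are expressible as sums involving only $\bvalpha(F, \ptes_n(\1))$ with $F$ of codimension $2$ and $3$ in $\ptes_n(\1)$, respectively. These values are positive by Theorem~\ref{ptes-positive}, and since the normalized volume weights contain at least one strictly positive contribution (coming from $P$ itself having positive volume in the top face direction), both coefficients are strictly positive.

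For case (2), where $\dim P = \binom{n}{2} - 1 = d$, the polytope $P$ lies in a facet-direction of $\ptes_n(\1)$, so the relevant indexing in the Reduction Theorem shifts by one in codimension: the coefficient $e_{d-2}(P) = e_{\binom{n}{2}-3}(P)$ is expressible through $\bvalpha(F, \ptes_n(\1))$ with $F$ of codimension $3$, which is again covered by Theorem~\ref{ptes-positive}. The main obstacle I anticipate is bookkeeping for case (2): one must verify carefully that the codimension-$3$ faces of $\ptes_n(\1)$ are indeed the ones picked out by the Reduction Theorem when $\dim P$ drops by one, and that the positive-weight structure of the Reduction Theorem still yields a strictly (not merely non-negatively) positive value of $e_{d-2}(P)$. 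Once these bookkeeping points are settled, the argument becomes a direct invocation of Theorem~\ref{ptes-positive}.
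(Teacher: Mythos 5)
Your proposal is correct and follows essentially the same route as the paper: pass from $\tes_n(\1)$ to $\ptes_n(\1)$ via the unimodular map $\psi_{\diag}$ and Lemma \ref{unimodulardef}, then combine Theorem \ref{ptes-positive} with the Reduction Theorem and Equation \eqref{maccoeffi}, keeping track of the fact that the Reduction Theorem transfers positivity by \emph{dimension} of faces, which produces exactly the codimension shift you identify in case (2). The only cosmetic difference is that the paper's Theorem \ref{reduction} is stated as a face-by-face positivity transfer of $\bvalpha$ rather than as a direct expression of $e_i(Q)$ in terms of $\bvalpha(F,R)$, but applying \eqref{maccoeffi} to $Q$ afterwards gives the same conclusion you reach.
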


It is known that $\tes_{n}(\ba)$ is a deformation of $\tes_{n}(\boldsymbol{1})$ for all $\ba \in \R_{\geq 0}^{n}$, and $\tes_n(\ba)$ is integral for $\ba \in \Z_{\ge 0}^n.$ Therefore, we are able to generalize the positivity result stated above to all Tesler polytopes, including $\tes_{n}(1,0,\dots,0)$ - the other polytope in Conjecture \ref{conj}.
\begin{cor}\label{cor:tesler}
  Let $\ba \in \Z_{\ge 0}^n$. Then $e_{d-2}(\tes_n(\ba))$ and $e_{d-3}(\tes_n(\ba))$ are both positive (assuming these coefficients exist), where $d = \dim(\tes_n(\ba)).$
\end{cor}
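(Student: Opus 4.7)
The plan is to deduce Corollary~\ref{cor:tesler} from Corollary~\ref{teslerdef-positiv}(1) after a simple unimodular reduction that trims leading zero entries of $\ba$. First I would establish the following \emph{leading-zero reduction}: if $a_1=0$, the hook-sum equation at $i=1$ reads $M_{11}+M_{12}+\cdots+M_{1n}=0$, which together with non-negativity forces the entire first row to vanish on every matrix in $\tes_n(\ba)$. Projecting to the remaining coordinates is then a lattice-preserving bijection
\[\tes_n(0,a_2,\dots,a_n)\;\cong\;\tes_{n-1}(a_2,\dots,a_n),\]
i.e., a unimodular equivalence. Iterating, if $\ba\ne\0$ and $k$ is the smallest index with $a_k>0$, then $\tes_n(\ba)\cong\tes_m(\ba')$ where $m=n-k+1$ and $\ba'=(a_k,\dots,a_n)\in\Z_{\ge0}^m$ satisfies $a'_1>0$; since unimodular equivalence preserves the Ehrhart polynomial, it suffices to prove positivity for $\tes_m(\ba')$.

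Second, I would show $\dim\tes_m(\ba')=\binom{m}{2}$. The affine hull already has this dimension, because the $m$ hook-sum equations are linearly independent (each contains a distinct diagonal entry $M_{ii}$). For full-dimensionality in the hull, I would produce a matrix $M^\circ$ with every entry strictly positive and hook sum $\ba'$, built row by row: pick $M_{1,2},\dots,M_{1,m}$ to be small positives summing to less than $a'_1$, forcing $M_{1,1}>0$ as well. Then for $i=2,\dots,m$ in order, the $i$-th hook-sum equation rearranges to
\[M_{ii}+M_{i,i+1}+\cdots+M_{i,m}=a'_i+\sum_{k<i}M_{k,i},\]
whose right-hand side is strictly positive (already $M_{1,i}>0$), so one may distribute strictly positive values across the $m-i+1$ unknowns on the left. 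This $M^\circ$ lies in the relative interior of $\tes_m(\ba')$, confirming full-dimensionality.

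Finally, $\tes_m(\ba')$ is integral (as $\ba'\in\Z_{\ge0}^m$), is a deformation of $\tes_m(\1)$ by the fact recalled in the excerpt, and has dimension $d=\binom{m}{2}$. When $m\ge 3$, Corollary~\ref{teslerdef-positiv}(1) (applied with $n$ there replaced by $m$) immediately yields $e_{d-2}(\tes_m(\ba'))>0$ and $e_{d-3}(\tes_m(\ba'))>0$. When $m\le 2$, one has $d\le 1$, so neither coefficient exists and the statement is vacuous; the case $\ba=\0$ is likewise trivial. The step requiring genuine verification is the full-dimensionality construction in the second paragraph; the first and third paragraphs amount to bookkeeping built directly on Corollary~\ref{teslerdef-positiv}.
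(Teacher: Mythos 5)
Your proposal is correct and follows essentially the same route as the paper: reduce to a hook-sum vector with positive first entry (this reduction and the full-dimensionality construction are exactly the content of Lemma~\ref{lem:Tes-dim}, which you re-derive rather than cite), invoke the known fact that $\tes_m(\ba')$ is a deformation of $\tes_m(\1)$, and apply Corollary~\ref{teslerdef-positiv}(1). Your explicit handling of the unimodularity of the coordinate projection and of the vacuous cases $m\le 2$ is a small bonus in precision but not a different argument.
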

  
Given the results in Corollary \ref{teslerdef-positiv}, it is natural to ask which polytopes are deformations of $\tes_{n}(\boldsymbol{1})$. Please see \cite[Section 4]{teslerv1} for some discussion on this.

	\subsection{Computing BV-$\alpha$ values}
Our method of proving Theorem \ref{ptes-positive} is by computing $\bvalpha(F,\ptes_n(\1))$ for all the codimension $2$ and $3$ faces $F$ of $\ptes_n(\1)$ using known formulas for BV-$\alpha$ values given in Lemmas \ref{cd2} and \ref{cd3}. 
One difficulty of applying these formulas to find the BV-$\alpha$ values for faces $F$ of $\ptes_{n}(\1)$ (or any other polytope) is that we need to compute dot products between the generators of ``pointed feasible cones'' of $\ptes_{n}(\ba)$ at $F$. Since these formulas are provided for pointed feasible cones that are unimodular with respect to projections of standard lattices, one has to be extremely careful to determine their generating rays.

In \S \ref{subsec:simplecomp}, we develop a method for obtaining dot products appearing in formulas similar to those in Lemmas \ref{cd2} and \ref{cd3}, without calculating the generators of pointed feasible cones. Instead we obtain them via computing dot products between the generators of normal cones. (The key result is summarized in Corollary \ref{CMinv}.) This approach simplifies our verification procedure significantly. In general, our method can be applied to any totally unimodular polytope, and it is efficient if we know the inequality descriptions of these polytopes which are often helpful for figuring out their normal cones. Hence, we expect that our result will be useful in computing BV-$\alpha$ values for other families of totally unimodular polytopes.

\subsection*{Organization of the paper}

In section \ref{sec:polyhedra}, we provide background on polyhedra theory.
In Section \ref{sec:bv}, after giving a brief description for the construction of BV-$\alpha$ functions, we develop the aforementioned method that helps to calculate the BV-$\alpha$ values arising from totally unimodular polytopes.
In Section \ref{sec:positivity}, we first formally introduce Tesler polytopes and review relevant results, and then define projected Tesler polytopes $\ptes_n(\1)$ and discuss its properties. By applying the method developed in \S \ref{sec:bv} to $\ptes_{n}(\boldsymbol{1})$, we obtain a proof for Theorem \ref{ptes-positive}. Finally, we prove Corollaries \ref{teslerdef-positiv} and \ref{cor:tesler} using the Reduction Theorem (Theorem \ref{reduction}).

\subsection*{Acknowledgements} The second author is partially supported by a grant from the Simons Foundation \#426756.

\section{Basic definitions and results in polyhedra theory}\label{sec:polyhedra}
In this section, we review terminologies and results related to polytopes/polyhedra that are relevant to this article. 
We assume that $\langle \cdot, \cdot \rangle$ is the dot product on $\R^{m}$,
and let $P \subset \R^m$ be a polytope. If a facet $F$ of $P$ contains a face $G$, we call $F$ a \emph{supporting facet} of $G$. 

Given a finite set of vectors $R=\{\br_1, \br_2, \dots, \br_k\} \subset \R^m$, a \emph{(polyhedral) cone} $K \subset \R^m$ \emph{generated} by $R$ is
\[ K := \left\{ \bx \in \R^m \ : \ \bx = \sum_{i=1}^k c_i \br_i, \ c_i \ge 0\right\}.\]
A cone $K \subset \R^m$ can also be defined by homogeneous linear inequalities (thus it is polyhedral).
A \emph{pointed cone} is a cone that does not contain a line. 
The \emph{polar cone} of a cone $K \subset \R^m$ is the cone:
\[K^{\circ}=\{\by \in W~~:~~\langle \bx,\by \rangle \leq 0, \ \forall \bx \in K\},\] 
where $W$ is the subspace of $\R^m$ spanned by $K$. It is a well-known result that the polar of the polar of a pointed cone is itself.

For any subset $S$ of $\R^m$, let $\lin(S)$ be the translation of the affine span of $S$ to the origin, and let $\R^m/\lin(S)$ be the orthogonal complement of $\lin(S)$. In this paper, $\lin(S)$ is always a \emph{rational} subspace of $\R^m,$ i.e., it is a subspace that can be defined by linear equalities with integer coefficients. 
For any subset or an element $B$ of $\R^m$, we use $B/\lin(S)$ to denote the canonical projection of $B$ onto $\R^m/\lin(S)$. Note that when $\lin(S)$ is a rational subspace, $\Z^m/\lin(S)$ is a lattice in $\R^m/\lin(S).$

A cone $K \subset \R^m/\lin(S)$ is \emph{unimodular with respect to the lattice  $\Z^m/\lin(S)$} if it can be generated by a set of vectors $\{\bd_{1},\dots,\bd_{l}\}$ that can be extended to a basis for the lattice $\Z^m/\lin(S).$ In this case, we call $\bd_{1},\dots,\bd_{l}$ the \emph{primitive generators} for the unimodular cone $K.$ We say $K$ is \emph{unimodular} if it is unimodular with respect to $\Z^m.$ (Note that for a unimodular cone $K$, each of its primitive generators is a \emph{primitive} vector, i.e., an integer vector with the property that the greatest common divisor of its components is $1$.)
\begin{defn}\label{cones}
  Suppose $P \subset \R^m$ is a polytope.
\begin{enumerate}
  \item For any face $F$ of $P,$ the \emph{feasible cone of $P$ at $F$} is:
    \[\fcone(F,P):=\{\by \in \R^m~:~\bx+\epsilon \by \in P~\text{for some}~\epsilon>0\},\]
    where $\bx$ is any interior point of $F$. (It can be shown that the definition does not depend on the choice of $\bx$.)
    The \emph{pointed feasible cone} of $P$ at $F$ is:
    \[\fcone^{\bp}(F,P):=\fcone(F,P)/\lin(F).\]
    \item For any face $F$ of $P$, the \emph{normal cone} of $P$ at $F$ is:
\[\ncone(F, P) := \{\bu \in \lin(P) ~~:~~\langle \bu,\bp_{1} \rangle \geq \langle \bu,\bp_{2} \rangle, \forall \bp_{1} \in F, \forall \bp_{2} \in P \}.\]
The \emph{normal fan} of $P$ is the collection of all normal cones of $P$ at its non-empty faces.
   
\end{enumerate}
\end{defn}

The following is one important connection between normal cones and pointed feasible cones: 
\begin{lem}\cite[Lemma 2.4]{Castillo2015v1} \label{lem:nfcone}
  Suppose $P \subset \R^m$ is a polytope and $F$ is a codimension $k$ face of $P.$ Then both $\fcone^p(F,P)$ and $\ncone(F,P)$ are full-dimensional pointed cones in the $k$-dimensional space $\lin(P)/\lin(F)$. Furthermore, they are poloar to one another, that is,
\begin{equation}\label{eq:nfcone}
	\ncone(F,P)^{\circ}=\fcone^{p}(F,P) \text{ and } \fcone^{p}(F,P)^\circ = \ncone(F,P).
\end{equation}
\end{lem}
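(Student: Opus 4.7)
The plan is to fix a relative interior point $\bx$ of $F$ and treat $\bx$ as the origin, so that the natural ambient space becomes $\lin(P)$ and the face $F$ contributes the distinguished subspace $\lin(F) \subseteq \lin(P)$. I will separately verify the full-dimensional pointedness of $\fcone^p(F,P)$ and $\ncone(F,P)$, and then exhibit the polarity via a short direct calculation. The three interrelated spaces $\R^m$, $\lin(P)$, and $\lin(P)/\lin(F)$ must be kept straight throughout; doing so will be the main bookkeeping difficulty.

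For $\fcone(F,P)$, first I will check $\lin(F) \subseteq \fcone(F,P) \subseteq \lin(P)$. The upper containment is immediate from the definition; the lower one uses that $\bx$ lies in the relative interior of $F$, so for any $\bv \in \lin(F)$, $\bx + \epsilon \bv \in F \subseteq P$ for small $\epsilon > 0$. Moreover, the inclusion $P - \bx \subseteq \fcone(F,P)$ (taking $\epsilon = 1$) shows $\fcone(F,P)$ spans $\lin(P)$, so $\fcone^p(F,P)$ is full-dimensional in $\lin(P)/\lin(F)$. Pointedness of $\fcone^p(F,P)$ then reduces to checking that the lineality space of $\fcone(F,P)$ equals $\lin(F)$: if $\pm \by \in \fcone(F,P)$, there exist $\epsilon_1,\epsilon_2 > 0$ with $\bx \pm \epsilon_i \by \in P$, so $\bx$ lies in the relative interior of a segment in $P$, and the standard face property forces both endpoints to lie in $F$, giving $\by \in \lin(F)$. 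Analogously, applying the defining inequality of $\ncone(F,P)$ twice to points $\bp_1,\bp_2 \in F$ shows any $\bu \in \ncone(F,P)$ is constant on $F$ and so lies in $\lin(F)^\perp$, placing $\ncone(F,P) \subseteq \lin(P) \cap \lin(F)^\perp$, which is canonically $k$-dimensional. Pointedness of $\ncone(F,P)$ is even easier: if $\pm\bu \in \ncone(F,P)$, then $\bu$ is constant on all of $P$, hence orthogonal to $\lin(P)$, and since $\bu \in \lin(P)$ we get $\bu = 0$.

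The polarity \eqref{eq:nfcone} is the heart of the matter. The one-line computation $\langle \bu, \by \rangle = \epsilon^{-1}(\langle \bu, \bx + \epsilon \by\rangle - \langle \bu, \bx\rangle) \le 0$, valid whenever $\bu \in \ncone(F,P)$ and $\bx + \epsilon \by \in P$, shows $\ncone(F,P) \subseteq \fcone(F,P)^\circ$ in $\lin(P)$. Conversely, any $\bu \in \lin(P) \cap \lin(F)^\perp$ with $\langle \bu, \by\rangle \le 0$ for every $\by \in \fcone(F,P)$ satisfies, in particular, $\langle \bu, \bp - \bx\rangle \le 0$ for all $\bp \in P$, which is exactly $\bu \in \ncone(F,P)$. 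Passing the containments to the quotient (legitimate because both sides already lie in $\lin(F)^\perp$) yields $\ncone(F,P) = \fcone^p(F,P)^\circ$, and the second identity in \eqref{eq:nfcone} follows from the biduality $K^{\circ\circ} = K$ for closed convex pointed cones. Full-dimensionality of $\ncone(F,P)$ then comes for free as the polar of a full-dimensional pointed cone. I expect the main obstacle to be ensuring that the polar operation, which depends on the ambient inner-product space, is taken consistently in the quotient $\lin(P)/\lin(F)$ (equivalently, in $\lin(P) \cap \lin(F)^\perp$) rather than in all of $\R^m$; once that identification is made explicit, the argument is clean.
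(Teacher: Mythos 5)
Your argument is correct, and there is nothing in the paper to compare it against: the lemma is quoted from \cite[Lemma 2.4]{Castillo2015v1} and the authors give no proof of it, so your write-up is a self-contained substitute rather than an alternative to an in-paper argument. The route you take is the standard one, and every step checks out against the paper's conventions, including the slightly unusual one that the polar is taken inside the linear span of the cone (which is why your observation that both $\fcone^{p}(F,P)$ and $\ncone(F,P)$ live in, and span, $\lin(P)\cap\lin(F)^{\perp}$ is exactly the bookkeeping needed before invoking biduality). The only step you compress is the reduction of pointedness of $\fcone^{p}(F,P)$ to the computation of the lineality space of $\fcone(F,P)$: to pass from ``lineality space equals $\lin(F)$'' to ``the projection is pointed'' one should note that $\fcone(F,P)$ is convex and contains $\lin(F)$, so that $\pm\bar{\by}\in\fcone^{p}(F,P)$ lifts to $\pm\by\in\fcone(F,P)$ for a single $\by$, after which your lineality computation applies. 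That is a one-line remark, not a gap in the idea. Likewise, biduality requires $\fcone^{p}(F,P)$ to be a closed (polyhedral) cone, which holds because for a polytope $\fcone(F,P)$ is the cone generated by $P-\bx$; you implicitly use this and it is harmless, but worth stating if the proof were to be included.
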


Definition \ref{cones} gives one common definition for pointed feasible cones and normal cones. The following lemma provides another way of constructing/defining them, which we state without proof. 

\begin{lem}\label{lem:construct-cones}
  Suppose $P \subset \R^m$ is a polytope and $F$ is a codimension $k$ face of $P.$ 
  \begin{enumerate}
    \item \label{itm:construct-fcone} If $\bv$ is a vertex of $F$, then $\fcone^p(F,P) = \fcone(\bv,P)/\lin(F).$ 

    \item \label{itm:construct-ncone} Assume the supporting facets of $F$ are $\{F_1, \dots, F_k\}$ and for each $i,$ let $\bn_i \in \lin(P)$ be an outer normal vector for the facet $F_i.$ Then $\ncone(F,P)$ is generated by $\bn_1, \dots, \bn_k$.
  \end{enumerate}
\end{lem}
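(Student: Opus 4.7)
The plan is to prove the two parts separately: part (1) by directly manipulating the defining condition for the feasible cone, and part (2) by combining Lemma \ref{lem:nfcone} with an explicit facet-inequality description of $\fcone(F, P)$.

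For part (1), I will first observe that $\fcone(F, P) \supseteq \lin(F)$, since starting from an interior point of $F$ one can perturb in any direction tangent to $F$ and remain in $F \subseteq P$. It then suffices to establish the set equality $\fcone(\bv, P) + \lin(F) = \fcone(F, P) + \lin(F)$, because quotienting by $\lin(F)$ will give the desired identity. To prove this equality, I will fix an interior point $\bx$ of $F$ and write $\bw := \bx - \bv \in \lin(F)$. The key calculation is that $\bx + \epsilon \by \in P$ if and only if $\bv + \epsilon(\by + \bw/\epsilon) \in P$, so a direction $\by$ lies in $\fcone(F, P)$ precisely when some translate of it by an element of $\lin(F)$ lies in $\fcone(\bv, P)$, and vice versa.

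For part (2), the plan is to describe $\fcone(F, P)$ by facet inequalities and then dualize via Lemma \ref{lem:nfcone}. If the facets $F_i$ are defined by $\langle \bn_i, \by \rangle \leq c_i$ with $\bn_i$ an outer normal, then an interior point $\bx$ of $F$ saturates the inequalities corresponding to the supporting facets $F_1, \dots, F_k$ and strictly satisfies the rest. Thus a direction $\by \in \lin(P)$ belongs to $\fcone(F, P)$ if and only if $\langle \bn_i, \by \rangle \leq 0$ for $i = 1, \dots, k$. Since each $\bn_i$ is orthogonal to $\lin(F_i) \supseteq \lin(F)$, this description passes to the quotient and presents $\fcone^p(F, P)$ as the polar of $\cone(\bn_1, \dots, \bn_k)$ inside $\lin(P)/\lin(F)$. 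Applying the polarity statement of Lemma \ref{lem:nfcone}, together with the fact that the polar of the polar of a pointed cone is itself, yields $\ncone(F, P) = \cone(\bn_1, \dots, \bn_k)$.

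The main obstacle I expect is careful bookkeeping among the various ambient spaces and quotients. In particular, $\ncone(F, P)$ is defined inside $\lin(P)$ but automatically consists of vectors orthogonal to $\lin(F)$, so it naturally sits in $\lin(P) \cap \lin(F)^\perp$, which is canonically identified with $\lin(P)/\lin(F)$ — the ambient space of $\fcone^p(F, P)$. I will need to verify that the polarity in Lemma \ref{lem:nfcone} is taken inside this common space and that each $\bn_i$ descends unambiguously to a generator there, which should follow from the outer-normal condition $\bn_i \in \lin(F_i)^\perp \cap \lin(P)$ together with $F \subseteq F_i$.
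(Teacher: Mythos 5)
The paper states Lemma \ref{lem:construct-cones} without proof, so there is no official argument to compare against; judged on its own, your proposal is correct and follows the standard route. Part (1) is fine: the identity $\bx+\epsilon\by=\bv+\epsilon(\by+\bw/\epsilon)$ with $\bw=\bx-\bv\in\lin(F)$ gives both inclusions $\fcone(F,P)\subseteq\fcone(\bv,P)+\lin(F)$ and $\fcone(\bv,P)\subseteq\fcone(F,P)+\lin(F)$, and projecting kills $\lin(F)$. Part (2) is also sound: the H-description $\fcone(F,P)=\{\by\in\lin(P):\langle\bn_i,\by\rangle\le 0,\ i=1,\dots,k\}$ descends to the quotient because each $\bn_i\perp\lin(F_i)\supseteq\lin(F)$. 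One small point to tighten: when you invoke ``the polar of the polar of a pointed cone is itself'' for $\cone(\bn_1,\dots,\bn_k)$, you do not yet know that this cone is pointed or that it spans $\lin(P)/\lin(F)$ (the paper's polar is taken relative to the span, so this matters). Both facts follow from Lemma \ref{lem:nfcone}: if the $\bn_i$ failed to span, your inequality description would make $\fcone^p(F,P)$ contain a line, contradicting its pointedness, and if $\cone(\bn_i)$ contained a line, its polar $\fcone^p(F,P)$ would lie in a hyperplane, contradicting its full-dimensionality. Alternatively, apply the double-polar identity to $\ncone(F,P)$ (which Lemma \ref{lem:nfcone} certifies is pointed and full-dimensional) and use the standard Weyl--Farkas fact that the polar of $\{\by:\langle\bn_i,\by\rangle\le 0\ \forall i\}$ is $\cone(\bn_1,\dots,\bn_k)$.
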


\begin{defn} \label{defn:unimodular}
  We say a polytope $P$ in $\R^{m}$ is \emph{totally unimodular} if it is an integral polytope and $\fcone(\bv,P)$ is unimodular for all the vertices $\bv$ of $P$.
\end{defn}
We remark that any totally unimodular polytope $P$ is a \emph{simple} polytope, that is, each vertex of $P$ is contained in exactly $\dim(P)$ many facets, or equivalently is contained in exactly $\dim(P)$ many edges.

\begin{defn} \label{combisoeq}
  Let $P$ and $Q$ be two polytopes. Then $Q$ is a \emph{deformation} of $P$ if there exists a surjective map $\phi$ from the set of the vertices of $P$ to that of $Q$ and $r_{i,j} \in \R_{\geq0}$ such that $\phi(\bv_{i})-\phi(\bv_{j})=r_{i,j}(\bv_{i}-\bv_{j})$ whenever $\bv_{i}$ and $\bv_{j}$ are adjacent vertices of $P$.
\end{defn}
It is a classical result \cite{postnikov2008faces} that there is an alternative but equivalent way of defining deformations in terms of normal fans. However, it requires considering normal fans of $P$ and $Q$ with respect to a same underlining space, which we do not include in this paper.
We finish this part with the following lemma that will be used in proving Corollary \ref{teslerdef-positiv}.
\begin{lem} \label{unimodulardef}
Let $P$ and $Q$ be two polytopes and $\psi$ is an invertible affine transformation from the affine hull of $Q$ to the affine hull of $P$. Then $Q$ is a deformation of $P$ if and only if $\psi(Q)$ is a deformation of $\psi(P)$.
\end{lem}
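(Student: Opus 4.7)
The plan is to verify the deformation property of Definition \ref{combisoeq} directly after applying $\psi$, using the fact that an invertible affine transformation preserves both the face lattice and vector differences up to a fixed linear part. Since $\psi^{-1}$ is itself an invertible affine transformation of the same type, it suffices to prove one implication: assuming $Q$ is a deformation of $P$, I would show $\psi(Q)$ is a deformation of $\psi(P)$. The converse then follows by applying this forward direction to $\psi^{-1}$.

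First I would record the two structural facts about $\psi$ that I need. Because $\psi$ is an invertible affine bijection between the relevant affine hulls, it induces bijections $\vert(P)\to\vert(\psi(P))$ and $\vert(Q)\to\vert(\psi(Q))$, and two vertices $\bv_i,\bv_j$ of $P$ span an edge of $P$ if and only if $\psi(\bv_i),\psi(\bv_j)$ span an edge of $\psi(P)$. Next, given the surjection $\phi\colon\vert(P)\to\vert(Q)$ and the scalars $r_{i,j}\in\R_{\ge 0}$ provided by the hypothesis that $Q$ is a deformation of $P$, I would define the candidate map
\[
\phi'\colon \vert(\psi(P))\to\vert(\psi(Q)), \qquad \phi':=\psi\circ\phi\circ\psi^{-1}.
\]
Well-definedness and surjectivity of $\phi'$ are immediate from the vertex bijections above together with the surjectivity of $\phi$.

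The crux is then a single computation. Writing $\psi(\bx)=M\bx+\bt$ with $M$ the (invertible) linear part and $\bt$ the translation, for all $\ba,\bb$ one has $\psi(\ba)-\psi(\bb)=M(\ba-\bb)$. For any adjacent vertices $\psi(\bv_i),\psi(\bv_j)$ of $\psi(P)$, the pair $\bv_i,\bv_j$ is adjacent in $P$ by the first structural fact, so the deformation condition for $\phi$ yields
\[
\phi'(\psi(\bv_i))-\phi'(\psi(\bv_j))=\psi(\phi(\bv_i))-\psi(\phi(\bv_j))=M\bigl(\phi(\bv_i)-\phi(\bv_j)\bigr)=r_{i,j}\,M(\bv_i-\bv_j)=r_{i,j}\bigl(\psi(\bv_i)-\psi(\bv_j)\bigr).
\]
Thus $\phi'$ witnesses $\psi(Q)$ as a deformation of $\psi(P)$ with the \emph{same} scalars $r_{i,j}$, and applying the same argument to $\psi^{-1}$ gives the converse.

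There is no substantive obstacle: the lemma is a formal consequence of two features of invertible affine maps—preservation of the face lattice (and hence of edge adjacency) and compatibility with taking differences of points through a single linear map $M$ that is independent of the pair $(\bv_i,\bv_j)$. The only point demanding a small amount of care is verifying that $\phi'$ is well-defined on $\vert(\psi(P))$ and surjective onto $\vert(\psi(Q))$; both follow at once from $\psi$ being a bijection on vertex sets.
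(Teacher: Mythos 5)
Your proposal is correct and follows essentially the same route as the paper's proof: both verify Definition \ref{combisoeq} directly by using that an invertible affine map sends differences of points through a fixed linear part, so the relation $\phi(\bv_i)-\phi(\bv_j)=r_{i,j}(\bv_i-\bv_j)$ is preserved, and both obtain the converse by symmetry (the paper says ``the same argument,'' you apply the forward direction to $\psi^{-1}$). Your write-up is merely more explicit about the induced vertex map $\phi'=\psi\circ\phi\circ\psi^{-1}$ and the preservation of edge adjacency, details the paper leaves implicit.
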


\begin{proof}
  For the forward direction, let $\bv , \bw$ be any pair of adjacent vertices of $P$ and $\bv', \bw'$ the corresponding vertices of $Q$. Then by Definition \ref{combisoeq}, there exists $r \in \R_{\ge 0}$ such that $\bv'-\bw'=r(\bv-\bw)$. Since $\psi$ is an affine transformation, we have that $\psi(\bv')-\psi(\bw')=r(\psi(\bv)-\psi(\bw))$. Therefore, $\psi(Q)$ is a deformation of $\psi(P)$. The backward direction can be proven by the same argument.
\end{proof}

\section{Computing Berline-Vergne's $\alpha$-construction} \label{sec:bv}

In the first part of this section, we provide more details on Berline-Vergne's $\alpha$-function construction for McMullen's formula, formulas for computing it, as well as the Reduction Theorem. 
In the second part, we develop an alternative method of obtaining dot products appearing in formulas for BV-$\alpha$ values, which is one of the main results of this paper. This method and the Reduction Theorem are both key ingredients in our proofs for Theorem \ref{ptes-positive} and Corollary \ref{teslerdef-positiv} that will be presented in Section \ref{sec:positivity}.

\subsection{Berline-Vergne's construction} 
In \cite{berline2007local}, Berline and Vergne associate to every rational affine cone $c$ an analytic function $\phi(c)$ on $\R^{m}$ which is recursively defined with respect to the dimension of cones. They show that $\phi$ is a valuation and $\phi(c)$ is analytic near the origin. Then they set $\bvalpha(F,P)$ to be the residue of $\phi(\fcone^{p}(F,P))$ around $0$, and prove that it is a valid $\alpha$-construction for McMullen's formula. 

One sees that this procedure of computing $\bvalpha$ or $\phi$ is complicated. 
Using the valuation property of $\phi$, one can reduce the problem of computing $\bvalpha(F, P)$ to the cases when  $\fcone^{p}(F,P)$ is unimodular with respect to $\Z^m/\lin(F)$. 
However, even for these cases, simple formulas for $\bvalpha(F,P)$ are only known if the dimension of $\fcone^{p}(F,P)$ is at most $3$. Note that it \cite[Example 19.2]{barvinok2008integer} follows immediately from Berline-Vergne's construction that 
\begin{eqnarray*}
	\bvalpha(F,P)=1, & \quad& \text{ when } \dim(\fcone^{p}(F,P))=0, \\
	\bvalpha(F,P)=\dfrac{1}{2}, & \quad& \text{ when } \dim(\fcone^{p}(F,P))=1.
\end{eqnarray*}
We include formulas for faces of codimensions $2$ and $3$ below.
\begin{lem} \cite[Example 19.3]{barvinok2008integer} \label{cd2} 
Let $F$ be a codimension $2$ face of an integral polytope $P \subset \R^m$. Suppose $\fcone^{p}(F,P)$ is a unimodular cone with respect to the lattice $\Z^{m}/\lin(F)$, and $\bu_1$ and $\bu_2$ are its primitive generators. 
Then 
    \[\bvalpha(F,P)=\frac{1}{4}+\frac{1}{12}\biggl{(}\frac{\left \langle \bu_1,\bu_2\right \rangle}{\left \langle \bu_1,\bu_1\right \rangle}+\frac{\left \langle \bu_1,\bu_2\right \rangle}{\left \langle \bu_2,\bu_2\right \rangle}\biggl{)}.\]
\end{lem}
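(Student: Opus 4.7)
The plan is to compute $\phi(c)(0)$ directly, where $c := \fcone^p(F,P)$ is viewed as a full-dimensional simplicial cone in the $2$-dimensional space $\R^m/\lin(F)$ with primitive generators $\bu_1, \bu_2$ forming a $\Z$-basis of $\Z^m/\lin(F) \cong \Z^2$. Since $\bvalpha(F,P)$ is the residue at $0$ of $\phi(c)(\xi)$, and the transverse cone structure makes $\phi(c)$ analytic near $0$, the residue is literally the value $\phi(c)(0)$.

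My plan is to apply the defining local Euler-Maclaurin relation
\[ S(c)(\xi) = \sum_{f \preceq c} \phi(\tcone(c,f))(\pi_{f}(\xi)) \cdot I(f)(\xi), \]
where $S(c)(\xi) = \sum_{\bn \in c \cap \Z^2} e^{\langle \bn, \xi \rangle}$, $I(f)(\xi) = \int_f e^{\langle \bx, \xi \rangle} d\bx$ with measure normalized by the induced lattice, $\tcone(c,f)$ is the orthogonal projection of $c$ to $\lin(f)^{\perp}$, and $\pi_f$ is the orthogonal projection $\R^2 \to \lin(f)^{\perp}$. The three face types contribute as follows: the top face $c$ contributes $I(c)(\xi) = 1/(x_1 x_2)$ with $x_i := \langle \bu_i, \xi\rangle$; each ray $R_i = \R_{\geq 0}\bu_i$ contributes $\phi(\tcone(c,R_i))(\pi_{R_i}(\xi)) \cdot (-1/x_i)$; and the apex contributes $\phi(c)(\xi)$ itself. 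Solving for $\phi(c)(\xi)$ gives
\[ \phi(c)(\xi) = S(c)(\xi) - \frac{1}{x_1 x_2} + \sum_{i=1,2} \frac{\phi(\tcone(c,R_i))(\pi_{R_i}(\xi))}{x_i}. \]

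Next, I would expand each piece near $\xi = 0$ using the Bernoulli series $(1-e^x)^{-1} = -1/x + 1/2 - x/12 + O(x^3)$. For the ray contributions, the transverse cone $\tcone(c,R_i)$ is the unimodular ray in the quotient lattice $\Z^2/\Z\bu_i$ generated by the image of $\bu_{3-i}$; pairing against $\pi_{R_i}(\xi) = \xi - (x_i/\langle \bu_i,\bu_i\rangle) \bu_i$ produces the scalar $\tilde\xi_i = x_{3-i} - \frac{\langle \bu_i, \bu_{3-i}\rangle}{\langle \bu_i, \bu_i\rangle} x_i$. Substituting the known 1D formula $\phi(\R_{\geq 0}\bw)(\zeta) = 1/2 - \langle \bw,\zeta\rangle/12 + O(\zeta^3)$ and collecting, the singular terms $1/(x_1 x_2)$, $\pm 1/(2x_i)$, and $\pm x_i/(12 x_{3-i})$ cancel among the three summands, leaving the constant $1/4$ together with exactly the terms $\frac{\langle \bu_1,\bu_2\rangle}{12 \langle \bu_i,\bu_i\rangle}$ that arise from the correction in $\tilde\xi_i$.

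The main obstacle is the careful bookkeeping of the orthogonal projection data through the recursion. The dot-product ratios $\langle \bu_1,\bu_2\rangle/\langle \bu_i,\bu_i\rangle$ do not arise from any combinatorial feature of $c$: they appear precisely because Berline-Vergne's construction uses the \emph{Euclidean} orthogonal projection $\pi_{R_i}$ when forming transverse cones, so that $\pi_{R_i}(\xi)$, re-expressed in the original coordinates $x_1, x_2$, picks up a coefficient involving $\langle \bu_i, \bu_{3-i}\rangle/\langle \bu_i, \bu_i\rangle$. Once the constant term in $\xi$ is extracted from the Laurent expansion, the stated formula follows.
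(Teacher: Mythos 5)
Your derivation is correct. Note, however, that the paper itself gives no proof of this lemma: it is quoted verbatim from Barvinok's book (Example 19.3 of \cite{barvinok2008integer}), so there is no internal argument to compare against; what you have written is essentially the computation behind that cited example. Concretely, your unwinding of the local Euler--Maclaurin recursion checks out: with $x_i=\langle \bu_i,\xi\rangle$ one has $S(c)(\xi)=\prod_i(1-e^{x_i})^{-1}$, the ray terms contribute $\tfrac{1}{x_i}\bigl(\tfrac12-\tfrac{1}{12}\tilde\xi_i\bigr)$ with $\tilde\xi_i=x_{3-i}-\tfrac{\langle\bu_1,\bu_2\rangle}{\langle\bu_i,\bu_i\rangle}x_i$, and after the singular pieces $\tfrac{1}{x_1x_2}$, $\tfrac{1}{2x_i}$, $\tfrac{x_{3-i}}{12x_i}$ cancel, the constant term is $\tfrac14+\tfrac{1}{12}\bigl(\tfrac{\langle\bu_1,\bu_2\rangle}{\langle\bu_1,\bu_1\rangle}+\tfrac{\langle\bu_1,\bu_2\rangle}{\langle\bu_2,\bu_2\rangle}\bigr)$ as claimed. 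Your closing observation --- that the inner-product ratios enter only through the Euclidean orthogonal projections used to realize the transverse cones --- is exactly the point that motivates the paper's later machinery (Lemma \ref{innerprod} and Corollary \ref{CMinv}) for extracting these dot products from facet normals.
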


\begin{lem} \cite[Lemma 3.10]{castillo2018berline} \label{cd3} 
  Let $F$ be a codimension $3$ face of an integral polytope $P \subset \R^m$. Suppose $\fcone^{p}(F,P)$ is a unimodular cone with respect to the lattice $\Z^{m}/\lin(F)$, and $\bu_1$, $\bu_2$ and $\bu_3$ are its primitive generators. Then
    \[\bvalpha(F,P)=\frac{1}{8}+\frac{1}{24}\biggl{(}\frac{\left \langle \bu_1,\bu_2\right \rangle}{\left \langle \bu_1,\bu_1\right \rangle}+\frac{\left \langle \bu_1,\bu_2\right \rangle}{\left \langle \bu_2,\bu_2\right \rangle}+\frac{\left \langle \bu_1,\bu_3\right \rangle}{\left \langle \bu_1,\bu_1\right \rangle}+\frac{\left \langle \bu_1,\bu_3\right \rangle}{\left \langle \bu_3,\bu_3\right \rangle}+\frac{\left \langle \bu_2,\bu_3\right \rangle}{\left \langle \bu_2,\bu_2\right \rangle}+\frac{\left \langle \bu_2,\bu_3\right \rangle}{\left \langle \bu_3,\bu_3\right \rangle}\biggl{)}.\]
\end{lem}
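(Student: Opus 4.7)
The plan is to prove Lemma \ref{cd3} by unfolding Berline-Vergne's recursive construction directly on a $3$-dimensional unimodular simplicial cone $c = \fcone^p(F,P)$ with primitive generators $\bu_1,\bu_2,\bu_3$, mirroring the calculation that yields the codimension-$2$ formula of Lemma \ref{cd2}. Working in $\R^m/\lin(F) \cong \lin(F)^{\perp}$ with the inner product inherited from $\R^m$, the lattice $\Z^m/\lin(F)$ has $\{\bu_1,\bu_2,\bu_3\}$ as a basis, and by the definition of $\bvalpha$ recalled above, $\bvalpha(F,P)$ is the residue at $\xi = 0$ of Berline-Vergne's analytic function $\phi(c)(\xi)$.

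The main step is to use the recursion defining $\phi$: for each face $f$ of $c$, one subtracts a correction built from $\phi$ on the \emph{transverse cone} of $c$ along $f$ (a cone in $\lin(f)^{\perp}$ whose primitive generators are the orthogonal projections of the remaining $\bu_i$'s onto $\lin(f)^{\perp}$), so that after removing the non-analytic pieces coming from the faces, what remains is analytic at the origin. For our $3$-dimensional $c$, there are four layers of contributions, indexed by the codimension of $f$: the apex $\{0\}$ contributes a universal $\tfrac{1}{2^3} = \tfrac{1}{8}$ (the analogue of the $\tfrac{1}{4}$ appearing in Lemma \ref{cd2}); the three rays $\R_{\ge 0}\bu_i$ each contribute a term that, via Lemma \ref{cd2} applied to the transverse $2$-dimensional cone along that ray, is a rational expression in the $\langle \bu_j, \bu_k\rangle$'s; and the three $2$-dimensional faces contribute purely via the $\tfrac{1}{2}$ value for $1$-dimensional cones. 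I would collect the constant Laurent coefficients in each layer and sum them.

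The hard part will be the bookkeeping of the Gram-Schmidt corrections forced by passing to transverse cones: along $\R_{\ge 0}\bu_i$, the primitive generators of the transverse cone are
\[ \bu_j' = \bu_j - \tfrac{\langle \bu_i,\bu_j\rangle}{\langle \bu_i,\bu_i\rangle}\bu_i, \qquad \bu_k' = \bu_k - \tfrac{\langle \bu_i,\bu_k\rangle}{\langle \bu_i,\bu_i\rangle}\bu_i, \]
whose Gram entries involve terms like $\langle \bu_j,\bu_k\rangle - \tfrac{\langle \bu_i,\bu_j\rangle\langle \bu_i,\bu_k\rangle}{\langle \bu_i,\bu_i\rangle}$. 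One must verify that, after applying the Lemma \ref{cd2} formula to $(\bu_j',\bu_k')$ and summing over the three choices of $i$, all triple-product cross terms cancel, leaving exactly the symmetric pairwise sum
\[ \frac{1}{24}\Bigl(\tfrac{\langle \bu_1,\bu_2\rangle}{\langle \bu_1,\bu_1\rangle} + \tfrac{\langle \bu_1,\bu_2\rangle}{\langle \bu_2,\bu_2\rangle} + \tfrac{\langle \bu_1,\bu_3\rangle}{\langle \bu_1,\bu_1\rangle} + \tfrac{\langle \bu_1,\bu_3\rangle}{\langle \bu_3,\bu_3\rangle} + \tfrac{\langle \bu_2,\bu_3\rangle}{\langle \bu_2,\bu_2\rangle} + \tfrac{\langle \bu_2,\bu_3\rangle}{\langle \bu_3,\bu_3\rangle}\Bigr). \]
This precise cancellation is what makes the formula depend only on pairwise Gram ratios. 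As sanity checks I would verify the result on the standard orthant (all off-diagonal inner products vanish, giving $\bvalpha = \tfrac{1}{8}$, consistent with $e_0([0,1]^3) = 8 \cdot \tfrac{1}{8} = 1$), and cross-check by dualizing via $\fcone^p(F,P)^{\circ} = \ncone(F,P)$ from Lemma \ref{lem:nfcone} --- the polar viewpoint is exactly the one the paper develops further in \S \ref{subsec:simplecomp}.
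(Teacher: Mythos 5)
First, note that the paper does not prove Lemma \ref{cd3}: it is quoted directly from \cite[Lemma 3.10]{castillo2018berline}, so there is no in-paper argument to compare yours against, and the honest options are to cite that lemma (as the authors do) or to reproduce its computation in full. Judged on its own terms, your proposal is a plan rather than a proof. You correctly identify the shape of Berline--Vergne's recursion --- a sum over the faces $f$ of $c=\fcone^{p}(F,P)$ of terms built from the transverse cone along $f$, whose generators are the orthogonal projections you write down --- but the entire content of the lemma is the assertion that after summing these corrections ``all triple-product cross terms cancel, leaving exactly the symmetric pairwise sum.'' You explicitly defer that verification. That cancellation \emph{is} the theorem; without carrying out the Laurent-series bookkeeping, what remains is a restatement of the algorithm together with the answer one hopes to reach. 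The sanity checks (the orthogonal case giving $\tfrac{1}{8}$, consistency with $e_0([0,1]^3)=1$) are fine but carry no probative weight.

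Second, one structural step of the plan, as written, would fail. In the Berline--Vergne identity the exponential sum over $c$ is expressed as $\sum_{f}\mu(t(c,f))(\xi)\,I(f)(\xi)$, where $I(f)$ is the exponential integral over the face $f$; for a $k$-dimensional face $f$ this integral is homogeneous of degree $-k$ in $\xi$, i.e.\ it has a pole of order $k$ at the origin. Consequently, when one extracts the constant term of the analytic function $\mu(c)$ at $\xi=0$, the face $f$ contributes the degree-$k$ Taylor coefficient of $\mu(t(c,f))$, \emph{not} its value at the origin. So the ray $\R_{\ge 0}\bu_i$ does not contribute ``via Lemma \ref{cd2} applied to the transverse $2$-dimensional cone'' (that lemma only supplies the constant term of the two-dimensional $\mu$, which is exactly the coefficient that does not enter), and the two-dimensional faces do not contribute ``purely via the $\tfrac{1}{2}$ value for $1$-dimensional cones.'' Your layering by codimension therefore attaches the wrong data to each layer; the correct computation needs the first-order expansion of the two-dimensional $\mu$ along each ray and the second-order expansion of the one-dimensional $\mu$ along each $2$-face. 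Repairing this and then performing the cancellation is essentially the proof given in \cite{castillo2018berline}, and it cannot be skipped.
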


\begin{rmk} \label{alpharmk}
	When $\dim(\fcone^{p}(F,P)) \geq 4$, the formulas for $\bvalpha(F,P)$ that one can obtain by directly applying Berline-Vergne's algorithm become way more complicated. However, by the nature of the algorithm, these formulas are still in terms of $\langle \bu_{i}, \bu_{j} \rangle$, the dot products between primitive generators of the unimodular cone $\fcone^p(F,P).$ Therefore, it is important to know the values of these dot products. 
\end{rmk}

Despite the difficulty and complication of obtaining formulas for computing the BV-$\alpha$ values, there is one great benefit of studying the question of Ehrhart positivity via the BV-$\alpha$-positivity approach. In \cite{castillo2018berline}, Castillo and the second author obtained the following theorem - the Reduction Theorem - from the valuation property of $\phi$:

\begin{thm} [Reduction Theorem] \label{reduction}
Suppose $P$ and $Q$ are two integral polytopes in $\R^{m}$. Assume further that $Q$ is a deformation of $P$. Then for any fixed k, if
$\bvalpha(F, P) > 0$ for every $k$-dimensional face $F$ of P, then $\bvalpha(G, Q) > 0$ for every $k$-dimensional
face $G$ of $Q$.
Therefore, BV-$\alpha$-positivity of $P$ implies BV-$\alpha$-positivity of $Q$.
\end{thm}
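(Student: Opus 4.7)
The plan is to use the valuation property of Berline--Vergne's function $\phi$ together with the classical characterization of deformations in terms of normal fans mentioned just after Definition \ref{combisoeq}: up to translation, $Q$ is a deformation of $P$ if and only if the normal fan of $P$ refines the normal fan of $Q$. In particular, for every face $G$ of $Q$, the normal cone $\ncone(G, Q)$ decomposes as a union
\[ \ncone(G, Q) = \bigcup_{F} \ncone(F, P), \]
where $F$ ranges over all faces of $P$ with $\ncone(F, P) \subseteq \ncone(G, Q)$; among these, the faces of dimension exactly $k := \dim(G)$ are precisely the ones whose normal cones are full-dimensional inside $\ncone(G, Q)$.

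Next, I would upgrade this set-theoretic decomposition to an additive identity among BV-$\alpha$ values. Recall that $\bvalpha(F, P)$ is extracted from $\phi$ applied to the pointed feasible cone $\fcone^{p}(F, P) = \ncone(F, P)^{\circ}$; since $\phi$ is a valuation on indicator functions of rational cones and vanishes on every cone that contains a line, an inclusion--exclusion argument applied to the decomposition above should produce an identity of the form
\[ \bvalpha(G, Q) = \sum_{\substack{F \text{ face of } P \\ \dim F = k \\ \ncone(F, P) \subseteq \ncone(G, Q)}} \bvalpha(F, P). \]
Contributions from lower-dimensional faces $F$ of $P$ drop out because the corresponding polar cones contain the subspace $\lin(F)/\lin(G)$, and $\phi$ of any cone with a line is zero.

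Because the refinement of normal fans induces a surjection from the faces of $P$ onto those of $Q$ that preserves dimension on the top-dimensional pieces of each $\ncone(G,Q)$, at least one $k$-dimensional face $F$ of $P$ appears on the right-hand side. Under the hypothesis $\bvalpha(F, P) > 0$ for every $k$-dimensional face $F$ of $P$, each summand is strictly positive and the sum is nonempty, so $\bvalpha(G, Q) > 0$. The final ``therefore'' clause of the theorem then follows by applying this argument for every $k$ from $0$ to $\dim(Q)$.

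The main obstacle is justifying the displayed additive identity rigorously. Polarization does not commute with unions, so one cannot merely polarize the cone decomposition of $\ncone(G, Q)$ and read off the identity. The idea is instead to work inside the valuation algebra generated by indicator functions of rational cones, where $\phi$ is a well-defined valuation, and to exploit the line-vanishing property of $\phi$ to kill all lower-dimensional contributions, leaving only the desired sum over $k$-dimensional faces. This is essentially the route taken by Castillo and the second author in \cite{castillo2018berline}; any alternative approach would need to combine the same two ingredients---the valuation and the line-vanishing properties of $\phi$---in some form.
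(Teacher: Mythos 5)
First, a point of reference: the paper does not actually prove Theorem \ref{reduction} --- it imports it from \cite{castillo2018berline} --- so there is no in-paper argument to compare against, and your proposal must stand on its own. Its outline does follow the strategy of that reference: reduce everything to the additive identity
\[ \bvalpha(G,Q)=\sum_{F}\bvalpha(F,P), \]
the sum running over the faces $F$ of $P$ whose normal cones are the maximal cells of the subdivision of $\ncone(G,Q)$ induced by the normal fan of $P$, and then conclude positivity from non-emptiness of the sum. The surrounding combinatorics you describe is correct: refinement of normal fans characterizes deformations, the maximal cells of that subdivision correspond to faces of $P$ of the same dimension as $G$ when $\dim P=\dim Q$, and at least one maximal cell always occurs. (When $\dim Q<\dim P$ you must be more careful: the normal cones have to be taken in a common ambient space, where every $\ncone(G,Q)$ acquires the lineality space $\lin(Q)^{\perp}\cap\lin(P)$, and the matching is by dimension of normal cones rather than by $\dim F=\dim G$. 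The theorem as stated, and Corollary \ref{teslerdef-positiv}(2), do need this case.)

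The genuine gap is that the displayed identity --- which is essentially the entire content of the theorem --- is asserted rather than proven. You rightly observe that one cannot simply polarize the decomposition of $\ncone(G,Q)$, and you correctly name the two properties of $\phi$ that must be combined (it is a valuation, and it vanishes on cones containing lines), but the proposal stops exactly where the work begins. The missing third ingredient is the theorem of Lawrence and of Khovanskii--Pukhlikov that the polarization map $[C]\mapsto[C^{\circ}]$ on indicator functions of cones descends to a valuation modulo the linear span of indicators of cones containing lines. Granting that, one writes the inclusion--exclusion identity $[\ncone(G,Q)]=\sum_{\tau}c_{\tau}[\tau]$ over the cones $\tau$ of the normal fan of $P$ contained in $\ncone(G,Q)$, with $c_{\tau}=1$ for each maximal $\tau$; polarizing yields an identity among the indicators of the corresponding pointed feasible cones modulo cones with lines; applying $\phi$ then kills every non-maximal $\tau$, since its polar still contains a line after quotienting by $\lin(F)$, and leaves exactly your sum with all coefficients equal to $+1$. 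Without this step (or an equivalent substitute) the argument is a restatement of the theorem rather than a proof of it.
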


By the Reduction Theorem, if we show that all the $k$-dimensional faces of an integral polytope $P$ is BV-$\alpha$-positive (which together with (\ref{maccoeffi}) implies that the corresponding Ehrhart coefficient is positive), then the same is true for any integral deformation of $P$.

\subsection{Computing BV-$\alpha$ values of totally unimodular polytopes.} \label{subsec:simplecomp}

Recall that if a polytope $P$ in $\R^{m}$ is totally unimodular, then for any vertex $v$ of $P$, the feasible cone $\fcone(v,P)$ is unimodular. As a consequence, for any face $F$ of $P,$ the pointed feasible cone $\fcone^{p}(F,P)$ is unimodular with respect to the lattice $\Z^{m}/\lin(F)$. (See Remark \ref{rmk:innerprod} below.) 
Thus, we can apply any known BV-$\alpha$ formulas (such as those presented in Lemmas \ref{cd2} and \ref{cd3}) to calculate the BV-$\alpha$ values for $P$. In this part, we present a way to obtain the dot products $\langle \bu_{i}, \bu_{j} \rangle$ appeared in these formulas. As stated in Remark \ref{alpharmk}, these dot products are generally important for computing BV-$\alpha$ values, so we expect our method will be useful when formulas for computing BV-$\alpha$ values for faces of other codimensions become known.

Below is the key lemma of this section. (Recall that for an edge of $P$ connecting two vertices $\bv$ and $\bw$, the \emph{primitive edge direction} from $\bv$ to $\bw$ is the vector $\bd = a(\bw-\bv)$ where $a\in \R_{>0}$ is chosen so that $\bd$ is a primitive vector.)

\begin{lem}\label{innerprod} Let $P$ be a totally unimodular polytope in $\R^{m}$, and assume that $P$ is full-dimensional. Suppose $F$ is a codimension $k$ face of $P$, and its supporting facets are $F_{1},\dots,F_{k}$ (so $F$ is the intersection of $F_{1},\dots,F_{k}$). Further assume that $\bn_i$ is the primitive outer normal vector of $F_{i}$ for each $1 \le i \le k$. (Hence, $\{\bn_1, \dots, \bn_k\}$ is a set of rays that generates $\ncone(F,P).$) 
	
	Fix a vertex $\bv$ of $F$. For each $i,$ let $\bv_{i}$ be the vertex of $P$ that is adjacent to $\bv$ but not in $F_{i}$, and $\bd_{i}$ the primitive edge direction from $\bv$ to $\bv_{i}$. (Note that the uniqueness of the choice of $\bv_i$ follows from the fact that $P$ is simple.) Define $\bu_{i}:=\bd_{i}/\lin(F)$ to be the projection of $\bd_i$ onto the orthogonal complement of $\lin(F).$ Then the following statements are true:
\begin{enumerate}[label={\rm (\arabic*)}]
   
    \item $\langle \bn_{i}, \bu_{i} \rangle = -1$ for each $1 \le i \le k.$
    \item $\langle \bn_{j}, \bu_{i} \rangle = 0$ for each pair of distinct $i$ and $j$ in $\{1, 2, \dots, k\}$.
    \item\label{itm:genfcone} $\{\bu_{1},\dots,\bu_{k}\}$ generates $\fcone^{p}(F,P)$.
    \item\label{itm:basis} $\{\bu_{1},\dots,\bu_{k}\}$ is a basis for the lattice $\Z^m/\lin(F).$
\end{enumerate}
\end{lem}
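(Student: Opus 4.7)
The plan is to exploit the simple-vertex structure at $\bv$ (which follows from total unimodularity) and to apply unimodularity twice: once to identify the primitive edge directions at $\bv$ with a $\Z$-basis of $\Z^m$, and once (via polarity) to identify the primitive outer normals at $\bv$ with the negative of the dual basis. Setting up the local structure at $\bv$: since $P$ is totally unimodular it is simple, so $\bv$ lies in exactly $m=\dim P$ facets; these necessarily include $F_1,\dots,F_k$ (because $\bv\in F\subseteq F_i$), and I will denote the remaining $m-k$ facets through $\bv$ by $G_1,\dots,G_{m-k}$. There are likewise $m$ edges at $\bv$: the $k$ edges along $\bd_1,\dots,\bd_k$ that leave $F_1,\dots,F_k$, together with $m-k$ additional edges along primitive directions $\be_1,\dots,\be_{m-k}$ that leave $G_1,\dots,G_{m-k}$. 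Total unimodularity says that $\fcone(\bv,P)=\cone(\bd_1,\dots,\bd_k,\be_1,\dots,\be_{m-k})$ is unimodular, so these $m$ vectors form a $\Z$-basis of $\Z^m$. Since the edge from $\bv$ along $\be_j$ leaves only $G_j$ and thus remains inside every $F_i$, it stays inside $F=F_1\cap\cdots\cap F_k$; hence $\be_1,\dots,\be_{m-k}\in\lin(F)$, and being linearly independent and $m-k$ in number they form a basis of $\lin(F)$.

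Next, I would prove (3) and (4) by projecting this $\Z$-basis modulo $\lin(F)$. By Lemma \ref{lem:construct-cones}(\ref{itm:construct-fcone}), $\fcone^{p}(F,P)=\fcone(\bv,P)/\lin(F)$, and projecting its generators sends each $\be_j$ to $0$ and each $\bd_i$ to $\bu_i$, which gives (3). The same projection shows that $\{\bu_1,\dots,\bu_k\}$ generates the quotient lattice $\Z^m/\lin(F)$; since these vectors span the $k$-dimensional space $\R^m/\lin(F)$ by (3), they are $\R$-linearly independent and therefore form a $\Z$-basis, proving (4).

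Finally, I would deduce (1) and (2) via polarity. Let $\bd_1^{*},\dots,\bd_k^{*},\be_1^{*},\dots,\be_{m-k}^{*}$ be the basis of $\R^m$ dual to $\{\bd_i,\be_j\}$ under the standard inner product; because $\Z^m$ is self-dual, this dual basis is again a $\Z$-basis of $\Z^m$, and in particular each $\bd_i^{*}$ is a primitive integer vector. A direct computation shows that the polar of $\fcone(\bv,P)$ is $\cone(-\bd_1^{*},\dots,-\bd_k^{*},-\be_1^{*},\dots,-\be_{m-k}^{*})$; by Lemma \ref{lem:nfcone} this cone equals $\ncone(\bv,P)$, which by Lemma \ref{lem:construct-cones}(\ref{itm:construct-ncone}) is generated by the primitive outer normals of the $m$ facets of $P$ through $\bv$. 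Matching each facet to the unique edge direction that leaves it identifies the primitive outer normal of $F_i$ as $\bn_i=-\bd_i^{*}$, and hence $\langle\bn_i,\bd_j\rangle=-\delta_{ij}$ for all $1\le i,j\le k$. Because $\bn_i\perp\lin(F_i)\supseteq\lin(F)$, the inner product is unchanged when $\bd_j$ is replaced by its projection $\bu_j$, giving $\langle\bn_i,\bu_j\rangle=-\delta_{ij}$, which is (1) and (2). The step I expect to require the most care is this identification $\bn_i=-\bd_i^{*}$: it simultaneously uses the polarity of $\fcone(\bv,P)$ and $\ncone(\bv,P)$, the self-duality of $\Z^m$ (to force primitivity of $\bd_i^{*}$), and the facet-edge correspondence that matches $F_i$ with the outgoing direction $\bd_i$. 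Everything else is bookkeeping on a simplicial cone.
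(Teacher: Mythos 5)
Your proof is correct and follows essentially the same route as the paper's: fix the vertex $\bv$, use simplicity and total unimodularity to realize the $m$ primitive edge directions at $\bv$ as a $\Z$-basis of $\Z^m$, obtain (3) and (4) by projecting modulo $\lin(F)$, and obtain (1) and (2) from the polarity between $\fcone(\bv,P)$ and $\ncone(\bv,P)$ followed by the observation that $\bn_j \perp \lin(F)$ makes the dot products insensitive to the projection. The only cosmetic difference is in pinning down $\langle \bn_i,\bd_i\rangle=-1$: you invoke self-duality of $\Z^m$ to see that the dual basis is a lattice basis (so $\bn_i=-\bd_i^{*}$ is primitive), whereas the paper argues that primitivity of $\bn_i$ forces the positive integer $a_i=-\langle\bn_i,\bd_i\rangle$ to equal $1$; these are the same fact in different clothing.
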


\begin{rmk}\label{rmk:innerprod}
  Note that conclusions \ref{itm:genfcone} and \ref{itm:basis} of the lemma above imply that $\fcone^p(F,P)$ is a unimodular cone with respect to the lattice $\Z^m/\lin(F)$, and $\bu_1, \dots, \bu_k$ are its primitive generators. 
\end{rmk}

\begin{proof}[Proof of Lemma \ref{innerprod}]
  Clearly $F_1, \dots, F_k$ are supporting facets of $\bv.$ Since $P$ is simple and full-dimensional in $\R^m$, the vertex $\bv$ is supported by $m$ facets. Assume $F_{k+1}, \dots, F_m$ are the other $m-k$ supporting facets of $v.$ For each $k+1 \le i \le m,$ we similarly let $\bn_i$ be the primitive outer normal vector of $F_{i}$, let $\bv_{i}$ be the vertex of $P$ that is adjacent to $\bv$ but not in $F_{i}$, and $\bd_{i}$ the primitive edge direction from $\bv$ to $\bv_{i}$. By the choices of these vectors and the fact that $P$ is totally unimodular, we have the following:
  \begin{enumerate}[label=(\alph*)]
    \item $\{ \bd_1, \dots, \bd_m\}$ forms a basis for the lattice $\Z^m.$ 
    \item $\{\bd_1,\dots, \bd_m\}$ generates $\fcone(\bv,P) = \fcone^p(\bv,P)$.
  \item  $\{\bn_1, \dots, \bn_m\}$ generates $\ncone(\bv,P)$.
  \item For each $k+1 \le j \le m,$ the vertex $\bv_j \in F$ and thus $\bd_j \in \lin(F)$.
\end{enumerate}
By Lemma \ref{lem:construct-cones}/\eqref{itm:construct-fcone}, we have that $\fcone^p(F,P) = \fcone(\bv,P)/\lin(F)$. This together with (a), (b) and (d) imply conclusions \ref{itm:genfcone} and \ref{itm:basis} of the lemma.

Before we prove conclusions (1) and (2) of the lemma, we prove the following claim first: for each $1 \le i \le m,$
\[ (\rm{i}) \ \langle \bn_{i},\bd_{i} \rangle = -1, \quad \text{and} \quad (\rm{ii}) \ \langle \bn_{j},\bd_{i} \rangle = 0~~\text{if $j\neq i$}.\]
By Lemma \ref{lem:nfcone}, we have that $\fcone^p(\bv,P)$ and $\ncone(\bv,P)$ are polar to one another. Therefore, we immediately have (ii), and that $\langle \bn_{i},\bd_{i} \rangle = -a_i$ for some positive integer $a_i >0$. However, since $\bn_i$ is primitive, by elementary number theory, there exists an integer point/vector $\bx \in \Z^{m}$ such that $\langle \bn_{i}, \bx \rangle=-1.$ But by (a), the vector $\bx$ is a $\Z$-linear combination of $\bd_1, \dots, \bd_m.$ This together with (ii) implies that $a_i = 1$. So (i) holds. 

Now let $i = 1, 2, \dots k.$ Since $\bu_i = \bd_i/\lin(F),$ we have that $\bu_i - \bd_i \in \lin(F)$ which is contained in $\lin(F_j)$ for each $1 \le j \le k.$ Hence, $\langle \bn_j, \bu_{i}-\bd_{i} \rangle = 0$, or equivalently, 
\[ \langle \bn_{j}, \bu_{i} \rangle = \langle \bn_{j}, \bd_{i} \rangle, \quad \text{for any pair of $i$ and $j$ in $\{1, 2, \dots, k\}$}.\]
Therefore, conclusions (1) and (2) follow from (i) and (ii).
\end{proof}

\begin{cor} \label{CMinv}
  Suppose $F, P$, $\{\bn_1, \dots, \bn_k\}$ and $\{\bu_1, \dots, \bu_k\}$ are as given in Lemma \ref{innerprod}. Let $C=(c_{i,j})$ be the $k \times k$ matrix whose $(i,j)$-th entry is $c_{i,j}=\langle \bn_{i}, \bn_{j} \rangle$ and $M=(m_{i,j})$ be the $k \times k$ matrix whose $(i,j)$-th entry is $m_{i,j}=\langle \bu_{i}, \bu_{j} \rangle$. Then $C$ and $M$ are inverse to one another.
\end{cor}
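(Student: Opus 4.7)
The plan is to leverage Lemma \ref{innerprod} directly: its conclusions (1) and (2) say that $\langle \bn_i, \bu_j \rangle = -\delta_{ij}$, so the set $\{\bn_i\}$ behaves like a (negative) dual basis to $\{\bu_j\}$ with respect to the standard inner product. The whole corollary should then fall out of one basis expansion.

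First, I would observe that since $P$ is full-dimensional and $F$ has codimension $k$, the orthogonal complement $W := \lin(F)^\perp \subset \R^m$ is a $k$-dimensional vector space. By construction, each $\bu_i = \bd_i/\lin(F)$ lies in $W$. Each $\bn_i$ also lies in $W$: since $F_i$ is a supporting facet of $F$ we have $\lin(F) \subseteq \lin(F_i)$, and $\bn_i \perp \lin(F_i)$, hence $\bn_i \perp \lin(F)$. By Lemma \ref{innerprod}\ref{itm:basis}, $\{\bu_1,\dots,\bu_k\}$ is a $\Z$-basis of $\Z^m/\lin(F)$, and in particular an $\R$-basis of $W$.

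Next, using this basis I would expand
\[
\bn_i \;=\; \sum_{j=1}^{k} a_{ij}\, \bu_j, \qquad 1 \le i \le k,
\]
and let $A = (a_{ij})$ be the $k \times k$ coefficient matrix. Taking the inner product with $\bu_l$ and applying Lemma \ref{innerprod}(1)(2) gives
\[
-\delta_{il} \;=\; \langle \bn_i, \bu_l \rangle \;=\; \sum_j a_{ij}\,\langle \bu_j, \bu_l \rangle \;=\; (A M)_{il},
\]
so $AM = -I_k$. Then, taking the inner product of the same expansion with $\bn_l$ and again invoking Lemma \ref{innerprod}(1)(2),
\[
C_{il} \;=\; \langle \bn_i, \bn_l \rangle \;=\; \sum_j a_{ij}\,\langle \bu_j, \bn_l \rangle \;=\; \sum_j a_{ij}(-\delta_{jl}) \;=\; -a_{il},
\]
so $C = -A$. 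Combining these two identities yields $CM = (-A)M = -(AM) = I_k$, which is exactly the desired conclusion that $C$ and $M$ are mutual inverses.

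There is no real obstacle here; the only point that needs care is justifying that $\bn_i \in W$ (so that the basis expansion is legitimate) and that $\{\bu_j\}$ is an $\R$-basis of $W$ (not merely a $\Z$-basis of the lattice), both of which are immediate from Lemma \ref{innerprod} together with the full-dimensionality hypothesis. Everything else is a two-line linear algebra computation driven entirely by the duality relation $\langle \bn_i, \bu_j\rangle = -\delta_{ij}$.
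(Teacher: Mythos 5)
Your proof is correct and is essentially the same argument as the paper's: both derive $CM=I$ from the biorthogonality relation $\langle \bn_i,\bu_j\rangle=-\delta_{ij}$ of Lemma \ref{innerprod}(1)(2) via a basis expansion in $\lin(P)/\lin(F)$. The only (immaterial) difference is that you expand the $\bn_i$ in the basis $\{\bu_j\}$ (justified by Lemma \ref{innerprod}\ref{itm:basis}), whereas the paper expands the $\bu_j$ in the basis $\{\bn_i\}$ (justified by the full-dimensionality of the normal cone from Lemma \ref{lem:nfcone}).
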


\begin{proof}
	Let $X= (\bn_1, \dots, \bn_k)$ be the $m \times k$ matrix whose columns are $\bn_i$'s, and $Y = (\bu_1, \dots, \bu_k)$ be the $m \times k$ matrix whose columns are $\bu_i$'s.
	Then by parts (1) and (2) of Lemma \ref{innerprod}, we have \[ X^t Y = - I.\]
	By Lemma \ref{lem:nfcone}, the normal cone $\ncone(F,P)$ is full-dimensional in $\R^m/\lin(F).$ Hence, its generating set $\{\bn_1, \dots, \bn_k\}$ is a basis for $\R^m/\lin(F)$. Therefore, for each $1 \le j \le k,$ we have 
	\[ \bu_j = a_{1,j} \bn_1 + \cdots + a_{k,j} \bn_k\] for some real numbers $a_{1,j}, \dots, a_{k,j}.$ 
	Now applying Lemma \ref{innerprod}/(1)(2) again, we get
	\begin{equation} \label{method2}
		\langle \bu_{i}, \bu_{j} \rangle=\langle \bu_i, a_{1,j}\bn_{1}+\cdots+ a_{i,j}\bn_i + \cdots+a_{k,j}\bn_{k}\rangle = -a_{i,j}.
\end{equation}
Hence,	
	\[ Y=  (\bu_1, \dots, \bu_k) =(\bn_1, \dots, \bn_k) (a_{i,j})_{1 \le i, j\le k}  =  X \left(-\langle \bu_{i}, \bu_{j} \rangle\right)_{1 \le i, j\le k}=  X (-M) = - XM.\]
	Therefore,
\[ -I = X^t Y = X^t (-XM) = - (X^t X) M.\] 
Clearly, we have $X^t X = C$. Thus, $C M = I$.
\end{proof}

For convenience, we call the matrix $C$ and the matrix $M$ appearing in the above corollary the \emph{matrix of dot products (MDP}) of $\ncone(F,P)$ and $\fcone^p(F,P)$, respectively. Then We can restate Corollary \ref{CMinv} as the following:

\begin{cor}\label{CMinv-restate}
  Suppose $P$ is a totally unimodular polytope and is full-dimensional in $\R^m.$, and $F$ is a face of $P$. Then the MDP of $\ncone(F,P)$ and the MDP of $\fcone^p(F,P)$ are inverse to one another.
\end{cor}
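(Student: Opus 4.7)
The plan is to recognize that Corollary \ref{CMinv-restate} is simply a rewording of Corollary \ref{CMinv} in the MDP language, so the proof reduces to checking that the matrices $C$ and $M$ appearing in Corollary \ref{CMinv} genuinely are the MDPs of the two cones in the restatement, and then invoking Corollary \ref{CMinv} verbatim.

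First I would identify the generators used on each side. Since $P$ is simple (as a totally unimodular polytope) and full-dimensional, any codimension $k$ face $F$ has exactly $k$ supporting facets $F_1, \dots, F_k$, and by Lemma \ref{lem:construct-cones}\eqref{itm:construct-ncone} their primitive outer normals $\bn_1, \dots, \bn_k$ are the canonical primitive ray generators of $\ncone(F,P)$. Hence, by definition of MDP, the matrix $C = (\langle \bn_i, \bn_j \rangle)$ of Corollary \ref{CMinv} is the MDP of $\ncone(F,P)$. For the feasible side, Remark \ref{rmk:innerprod} confirms that the vectors $\bu_1, \dots, \bu_k$ constructed in Lemma \ref{innerprod} are precisely the primitive generators of the unimodular cone $\fcone^p(F,P)$, so $M = (\langle \bu_i, \bu_j \rangle)$ is the MDP of $\fcone^p(F,P)$.

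Second, I would invoke Corollary \ref{CMinv} to conclude $CM = I$. Combined with the two identifications above, this is exactly the statement that the MDP of $\ncone(F,P)$ and the MDP of $\fcone^p(F,P)$ are inverse to one another.

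The only point worth pausing on — and it is not really an obstacle — is the implicit consistency of indexings: the MDP is defined with reference to an ordered list of generators, so the claim ``$C$ and $M$ are inverses'' only makes sense once the rows and columns on the two sides are matched. This matching is supplied by the construction in Lemma \ref{innerprod}, where $\bu_i$ is attached to the same supporting facet $F_i$ whose primitive outer normal is $\bn_i$. Under this natural bijection $F_i \leftrightarrow \bn_i \leftrightarrow \bu_i$, the restated corollary follows immediately from Corollary \ref{CMinv} with no further computation required.
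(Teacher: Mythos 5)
Your proposal is correct and matches the paper exactly: the paper offers no separate argument for Corollary \ref{CMinv-restate}, treating it as an immediate restatement of Corollary \ref{CMinv} once the matrices $C$ and $M$ there are named the MDPs of $\ncone(F,P)$ and $\fcone^p(F,P)$. Your extra care in identifying the generators (via Lemma \ref{lem:construct-cones} and Remark \ref{rmk:innerprod}) and in matching the indexing $F_i \leftrightarrow \bn_i \leftrightarrow \bu_i$ is sound and only makes explicit what the paper leaves implicit.
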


\begin{rmk}\label{rmk:fulldim}
  We remark that results given in Lemma \ref{innerprod} and its corollaries are stated for totally unimodular polytopes that are full-dimensional in $\R^m$. If a totally unimodular polytope $P$ in $\R^m$ is not full-dimensional, one can consider a unimodularly equivalent copy of $P$ that is full-dimensional in its ambient space and apply our results. This is the approach we will take when we discuss Tesler polytopes in the next section.

  We want to note that it is possible to modify the choice of generators of $\ncone(F,P)$ in Lemma \ref{innerprod} to obtain results that work for totally unimodular polytopes that are not full-dimensional. But since the setup would be more complicated, we only include the current simpler version that we use in this article.  
\end{rmk}

\section{Positivity of the (projected) Tesler polytope of hook sums $(1,\dots,1)$} \label{sec:positivity}

In this section, we start by formally defining Tesler polytopes $\tes_n(\ba)$ and reviewing known results on them. We then define projected Tesler polytopes $\ptes_n(\ba)$ and discuss their facet normal vectors. Next, we apply the method developed in \S \ref{subsec:simplecomp} together with Lemmas \ref{cd2} and \ref{cd3} to show that the BV-$\alpha$ values of all the codimension $2$ and $3$ faces of $\ptes_{n}(\boldsymbol{1})$ are positive. Finally, as a consequence, we complete the proof for our main results - Theorem \ref{ptes-positive} and Corollaries \ref{teslerdef-positiv} and \ref{cor:tesler}. We always assume that $n$ is a positive integer.

\subsection{Background on Tesler polytopes}

In this part, we review definitions and results related to Tesler polytopes that are relevant to this paper. Majority of them are given in \cite{Meszaros2017}.  Let $\U(n)$ be the set of $n \times n$ upper triangular matrices. If we ignore the zeros below the diagonal, then $\U(n) \cong \R^{\tbinom{n+1}{2}}$ as a vector space. 

\begin{defn}\label{defn:tesler}
Let $\MM=(m_{i,j}) \in \U(n)$.
The \emph{$k$-th hook-sum of $\MM$} is the sum of the entries on the $k$-th row of $\MM$ minus the sum of the entries on the $k$-th column of $\MM$ above the diagonal element, that is,
\[\hs_k(\MM) :=(m_{k,k}+m_{k,k+1}+\cdots+m_{k,n}) - (m_{1,k}+m_{2,k}+\cdots+m_{k-1,k}).\]
The \emph{hook-sum vector of $\MM$} is defined to be 
$\Hs(\MM) :=(\hs_1(\MM), \hs_2(\MM),\dots, \hs_n(\MM)).$

For any $\ba=(a_1,\dots,a_n) \in \R_{\geq 0}^{n}$, let
\begin{equation}\label{eq:defn-Hn}
  \H_n(\ba):=\{\MM \in \U(n)~|~\Hs(\MM)=\ba  \},
\end{equation}
be the affine subspace of $\U(n)$ defined by the hook sum conditions determined by $\ba,$ and then define the \emph{Tesler polytope of hook sum $\ba$} to be
\begin{equation}
  \tes_{n}(\ba) := \{\MM=(m_{i,j}) \in \H_n(\ba)~~|~~m_{i,j} \geq 0 \text{ for $1 \le i \le j \le n$} \}. 
	\label{eq:defntesler}
\end{equation}
\end{defn}

\begin{lem}\label{lem:Tes-dim}
  Let $\ba=(a_1, a_2,\dots, a_n) \in \R_{\geq 0}^{n}$. Suppose that the first $p$ entries of $\ba$ are zero and $a_{p+1} > 0,$ and let $\bb = (a_{p+1}, a_{p+2}, \dots, a_n)$, then $\tes_{n}(\ba)$ is isomorphic to $\tes_{n-p}(\bb)$, which is a non-empty polytope of dimension $\binom{n-p}{2}.$
  
  Furthermore, if $a_1 > 0$, i.e., $\ba \in \R_{>0} \times \R_{\ge 0}^{n-1},$ then $\tes_n(\ba)$ is full-dimensional in $\H_n(\ba).$
  
\end{lem}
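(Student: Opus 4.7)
The plan is to prove the lemma in two halves, corresponding to the ``reduction'' statement and the ``full-dimensionality'' statement. The last sentence of the lemma is the real technical content; everything else is a bookkeeping reduction.

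\textbf{Reduction to the case $a_1>0$.} First I would establish by induction on $p$ that every $\MM \in \tes_n(\ba)$ has its top $p$ rows identically zero. The base case $p=1$ follows because $\hs_1(\MM) = m_{1,1}+m_{1,2}+\cdots+m_{1,n} = a_1 = 0$ with all $m_{1,j}\ge 0$ forces $m_{1,j}=0$ for every $j$. Once the first row vanishes, the contribution $m_{1,k}$ subtracted in $\hs_k$ is zero, so rows $2,\dots,n$ of $\MM$ form an element of $\U(n-1)$ with hook-sum vector $(a_2,\dots,a_n)$; the inductive hypothesis finishes the claim. The projection sending $\MM$ to its lower-right $(n-p)\times(n-p)$ block is then linear, injective, and its image is exactly $\tes_{n-p}(\bb)$, giving a (unimodular) isomorphism $\tes_n(\ba)\cong \tes_{n-p}(\bb)$. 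So it suffices to prove non-emptiness, the dimension formula, and full-dimensionality in $\H_n(\ba)$ under the extra assumption $a_1>0$.

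\textbf{Dimension of the ambient affine space.} I would next observe that $\Hs:\U(n)\to\R^n$ is linear, and surjective because, e.g., the diagonal matrix $\diag(a_1,\dots,a_n)$ maps to $\ba$. Hence
\[
\dim \H_n(\ba) \;=\; \dim \U(n) - n \;=\; \binom{n+1}{2}-n \;=\; \binom{n}{2}.
\]

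\textbf{Constructing a relative-interior point.} The core step is to build a matrix $\MM\in \tes_n(\ba)$ with $m_{i,j}>0$ for all $1\le i\le j\le n$; such an $\MM$ is an interior point of $\tes_n(\ba)$ inside $\H_n(\ba)$ (each facet defining inequality $m_{i,j}\ge 0$ is strict), so its existence both shows $\tes_n(\ba)\neq\emptyset$ and forces $\dim \tes_n(\ba)=\dim \H_n(\ba)=\binom{n}{2}$. I would construct $\MM$ row by row. Because $a_1>0$, choose any positive real numbers $m_{1,1},m_{1,2},\dots,m_{1,n}>0$ summing to $a_1$. For each subsequent row $k\ge 2$, the hook-sum constraint rearranges to
\[
m_{k,k}+m_{k,k+1}+\cdots+m_{k,n} \;=\; a_k + m_{1,k}+m_{2,k}+\cdots+m_{k-1,k},
\]
and the right-hand side is strictly positive since the entry $m_{1,k}>0$ was already chosen in row $1$ and the remaining terms are non-negative. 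Hence the $n-k+1$ entries on the left can be chosen to be strictly positive reals summing to this value, and the induction proceeds.

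\textbf{Conclusion.} Combining the two pieces: $\tes_n(\ba)\cong \tes_{n-p}(\bb)$ by the reduction, and since $\bb$ has positive first entry $a_{p+1}>0$, the second part applied to $\bb$ gives that $\tes_{n-p}(\bb)$ is a non-empty polytope of dimension $\binom{n-p}{2}$ that is full-dimensional in $\H_{n-p}(\bb)$. The ``furthermore'' claim is exactly the $p=0$ case of this argument. I do not anticipate any serious obstacle; the only point requiring care is making sure the inductive construction of the strictly positive matrix never collapses, which is precisely why the hypothesis $a_1>0$ (rather than $a_k>0$ for some other $k$) is used to feed positivity into every subsequent row through the first row.
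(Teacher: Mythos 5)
Your proposal is correct and follows essentially the same route as the paper: reduce to the case $a_1>0$, note $\dim \H_n(\ba)=\binom{n}{2}$, and build a matrix in $\tes_n(\ba)$ with all entries strictly positive row by row (the paper distributes each row sum equally, yours is the generic version of the same construction), whose existence immediately gives full-dimensionality. The only parts you spell out that the paper leaves implicit are the vanishing of the first $p$ rows and the surjectivity argument for $\dim\H_n(\ba)$, which the paper delegates to a separate unproved lemma.
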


It was shown in \cite[Corollary 2.6]{Meszaros2017} that the dimension of $\tes_{n}(\ba)$ is $\binom{n}{2}$ for $\ba \in \Z_{>0} \times \Z_{\ge 0}^{n-1}.$ Even though the proof works for $\ba \in \R_{>0} \times \R_{\ge 0}^{n-1}$ as well, their arguments are quite involved. Hence, we provide an alternative and more straightforward proof below. We need the following result on $\H_n(\ba)$, which follows from elementary linear algebra.

\begin{lem}\label{lem:Hn-dim}
  For any $\ba \in \R_{\ge 0}^n,$ the affine space $\H_n(\ba)$ has dimension $\binom{n}{2}.$
\end{lem}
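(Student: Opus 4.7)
The plan is to view $\Hs$ as a linear map from $\U(n)$ to $\R^n$ and apply rank--nullity. Since ignoring the entries below the diagonal identifies $\U(n)$ with $\R^{\binom{n+1}{2}}$ as a vector space, and $\Hs$ is visibly linear in the entries $m_{i,j}$ (each $\hs_k$ is a signed sum of entries), we get a linear map $\Hs\colon \R^{\binom{n+1}{2}} \to \R^n$. The set $\H_n(\ba)$ is by definition the preimage $\Hs^{-1}(\ba)$, so provided it is non-empty it is an affine coset of $\ker(\Hs)$ with $\dim \H_n(\ba) = \dim \ker(\Hs)$.

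The first step is to verify that $\H_n(\ba)$ is non-empty and that $\Hs$ is surjective, both of which I would handle simultaneously by exhibiting an explicit preimage. Namely, I would take the diagonal matrix $\MM$ with $m_{k,k} = a_k$ and $m_{i,j} = 0$ for $i < j$. A direct inspection of the definition of $\hs_k$ gives $\hs_k(\MM) = a_k - 0 = a_k$, so $\Hs(\MM) = \ba$. Since $\ba \in \R_{\geq 0}^n$ was arbitrary (and in fact this works for any $\ba \in \R^n$), this simultaneously shows $\H_n(\ba) \neq \emptyset$ and that $\Hs$ is surjective.

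Combining the two, rank--nullity yields
\[
\dim \ker(\Hs) \;=\; \dim \U(n) - \dim \R^n \;=\; \binom{n+1}{2} - n \;=\; \binom{n}{2},
\]
and therefore $\dim \H_n(\ba) = \binom{n}{2}$, as claimed. I do not anticipate any real obstacle here: the entire content is elementary linear algebra once one notices that each $\hs_k$ is a linear functional whose support in the coordinates of $\U(n)$ includes the diagonal entry $m_{k,k}$ (with coefficient $+1$) and no other diagonal entry, which is precisely the observation that makes the diagonal matrix above a convenient witness for surjectivity.
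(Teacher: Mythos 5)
Your proof is correct and is precisely the elementary linear algebra argument the paper alludes to (the paper states the lemma without proof, remarking only that it ``follows from elementary linear algebra''): the diagonal matrix $\operatorname{diag}(a_1,\dots,a_n)$ is indeed a witness that $\H_n(\ba)=\Hs^{-1}(\ba)$ is a non-empty coset of $\ker(\Hs)$, and rank--nullity gives $\binom{n+1}{2}-n=\binom{n}{2}$. Nothing is missing.
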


\begin{proof}[Proof of Lemma \ref{lem:Tes-dim}]
  First, $\tes_n(\ba)$ is clearly bounded, and hence is a polytope. Next, it is easy to check that $\tes_n(\ba)$ is isomrphic to $\tes_{n-p}(\bb).$ Given these and that $\tes_n(\ba)$ lives in $\H_n(\ba)$ (which has dimension $\binom{n}{2}$ by Lemma \ref{lem:Hn-dim}), one sees that it suffices to show that if $a_1 > 0,$ then $\tes_n(\ba)$ has dimension $\binom{n}{2}.$

  Assume $a_1 > 0$. We first show that $\tes_n(\ba)$ contains a matrix $\XX=(x_{i,j})$ with all positive entries by construction: 
  \begin{enumerate}
    \item Let $c_1 = a_1/n$ and let $x_{1,j} = c_1$ for all $1 \le j \le n.$
    \item Let $c_2 = (c_1 + a_2)/(n-1)$, and let $x_{2,j} = c_2$ for all $2 \le j \le n.$
    \item Let $c_3 = (c_1+c_2+a_3)/(n-2)$, and let $x_{3,j} = c_3$ for all $3 \le j \le n.$
    \item[(\dots)] \dots \dots \dots
    \item[(n)] Let $c_n = (c_1+c_2 + \cdots + c_{n-1} + a_n)/1$, and let $x_{n,n} = c_n.$
  \end{enumerate}
  It is easy to verify that $\XX=(x_{i,j}) \in \tes_n(\ba)$ with all positive entries, as desired. 

  Now observe that for any $\MM=(m_{i,j}) \in \H_n(0,0,\dots,0)$ with each $|m_{i,j}|$ sufficiently small, we have that $\XX + \MM \in \tes_n(\ba).$ By Lemma \ref{lem:Hn-dim}, the vector space $\H_n(0,0,\dots,0)$ has dimension $\binom{n}{2}.$ Hence, we conclude that $\tes_n(\ba)$ is of dimension $\binom{n}{2}.$
\end{proof}

For any $\ba \in \Z_{\geq 0}^n$, M\'{e}sz\'{a}ros, Morales and Rhoades \cite{Meszaros2017} gave the characterization for the face poset of $\tes_{n}(\ba)$ using the concept of support. 
Their characterization can be easily generalized to any $\ba \in \R_{\geq 0}^{n}$ by the same proof.

\begin{defn}
For $1 \leq i \leq j \leq n$, let 
$H_{i,j}^n$ be the hyperplane consisting of upper triangular matrices whose $(i,j)$-th entry is $0,$ that is,
\begin{equation} \label{hyperplane}
H_{i,j}^{n}:=\{\MM=(m_{l,k}) \in \U(n)~~|~~m_{i,j}=0\}. 
\end{equation}
We say the intersection $H_{i_{1},j_{1}}^{n} \cap \cdots \cap H_{i_{k},j_{k}}^{n}$ \emph{does not make any zero rows} if the intersection is not contained in $H_{i,i}^{n} \cap H_{i,i+1}^{n} \cap \cdots \cap H_{i,n}^{n}$ for every $1 \leq i \leq n$.
\end{defn}
\begin{thm} \cite[Theorem 2.5]{Meszaros2017}\label{Tesler} 
	Let $\ba \in \R_{>0}^n$. If F is a codimension $k$ face of $\tes_n(\ba)$, then $F$ is of the form 
	    \begin{equation}
    \tes_n(\ba) \cap H_{i_{1},j_{1}}^{n} \cap \cdots \cap H_{i_{k},j_{k}}^{n},
		    \label{eq:faceform}
	    \end{equation}
		where $H_{i_{1},j_{1}}^{n} \cap \cdots \cap H_{i_{k},j_{k}}^{n}$ does not make any zero rows. Conversely, any such form is a codimension $k$ face of $\tes_{n}(\ba)$.

\end{thm}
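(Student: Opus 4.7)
The plan is to recast the theorem as a two-way correspondence between codimension $k$ faces of $\tes_n(\ba)$ and size-$k$ subsets $S \subseteq \{(i,j) : 1 \le i \le j \le n\}$ whose associated intersection $\bigcap_{(i,j) \in S} H_{i,j}^n$ leaves at least one nonzero position in every row. Writing $F_S := \tes_n(\ba) \cap \bigcap_{(i,j) \in S} H_{i,j}^n$, the inequality description \eqref{eq:defntesler} together with standard polyhedral theory tells me that every face $F$ of $\tes_n(\ba)$ equals $F_{S(F)}$, where $S(F) = \{(i,j) : m_{i,j} \equiv 0 \text{ on } F\}$ is the maximal set of tight inequalities, and that every nonempty $F_S$ is itself a face of $\tes_n(\ba)$ (as a finite intersection of the faces $\tes_n(\ba) \cap H_{i,j}^n$). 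What remains is a non-emptiness criterion and a codimension computation.

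For non-emptiness: if $\bigcap_{(i,j) \in S} H_{i,j}^n$ forces some row $i$ to be zero, then any $\MM \in F_S$ would have $\hs_i(\MM) = -\sum_{l < i} m_{l,i} \le 0 < a_i$, contradicting $\Hs(\MM) = \ba \in \R_{>0}^n$. For the converse, I would construct a point in the relative interior of $F_S$ by a greedy row-by-row procedure mirroring the proof of Lemma \ref{lem:Tes-dim}: in row $i$ the required row sum is $s_i := a_i + \sum_{l < i} m_{l,i} > 0$ and, by the ``no zero rows'' hypothesis, the set of allowed positions $T_i := \{j : i \le j \le n,\ (i,j) \notin S\}$ is nonempty, so I distribute $s_i$ as strictly positive values across $T_i$. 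This yields $\MM \in F_S$ with $m_{i,j} > 0$ exactly when $(i,j) \notin S$, certifying both nonemptiness and that $S = S(F_S)$.

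For the codimension, I need to show the $|S|$ coordinate functions $\{m_{i,j}\}_{(i,j) \in S}$ are linearly independent when restricted to $\H_n(\ba)$, i.e.\ that no nontrivial relation $\sum_{(i,j) \in S} c_{i,j}\, m_{i,j} = \sum_p \lambda_p\, \hs_p$ holds on $\U(n)$. Using $\hs_p = \sum_{j \ge p} m_{p,j} - \sum_{l < p} m_{l,p}$ and comparing coefficients of each $m_{p,q}$ for $p \le q$, such a relation forces $\lambda_p = 0$ whenever $(p,p) \notin S$ and $\lambda_p = \lambda_q$ whenever $(p,q) \notin S$ with $p < q$. I would encode this in a graph $G$ on vertex set $\{0, 1, \dots, n\}$ with an edge $\{0,p\}$ for every non-selected diagonal $(p,p)$ and an edge $\{p,q\}$ for every non-selected off-diagonal pair $(p,q)$ with $p<q$; setting $\lambda_0 := 0$, the relations above read ``$\lambda_p = 0$ for every $p$ in the connected component of $0$.'' The ``no zero rows'' hypothesis guarantees each $p \in \{1, \dots, n\}$ has an edge to some element of $\{0\} \cup \{p+1, \dots, n\}$, so downward induction starting from $p = n$ places every vertex in the component of $0$. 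Thus all $\lambda_p$ vanish, hence all $c_{i,j}$ vanish, and the codimension of $F_S$ in $\tes_n(\ba)$ equals $|S|$.

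Combining the two claims yields both directions at once: a codimension $k$ face $F = F_{S(F)}$ is nonempty, so the non-emptiness criterion gives ``no zero rows'' for $S(F)$, and the codimension calculation forces $|S(F)| = k$; conversely, any size-$k$ subset with ``no zero rows'' produces a nonempty $F_S$ of codimension exactly $k$. The main obstacle I anticipate is the codimension computation: the asymmetric way diagonal entries (touching only one $\hs_p$) and off-diagonal entries (coupling two) sit inside the hook-sum functionals is what forces the graph-based bookkeeping, and the delicate point is verifying that ``no zero rows'' is exactly strong enough to connect every vertex to $0$.
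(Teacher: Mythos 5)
Your proof is correct, but note that the paper does not actually prove Theorem \ref{Tesler}: it is quoted from \cite[Theorem 2.5]{Meszaros2017} with the remark that the argument there extends verbatim from $\Z_{\ge 0}^n$ to real hook sums. The route in that reference goes through the identification of $\tes_n(\ba)$ with a flow polytope of the complete graph and the general description of faces of flow polytopes via supports of feasible flows; the ``no zero rows'' condition is the translation of the requirement that every vertex with positive netflow retain an available outgoing edge. Your argument is instead a self-contained computation directly from the inequality description \eqref{eq:defntesler}: the non-emptiness criterion and the greedy relative-interior point (echoing the construction in the proof of Lemma \ref{lem:Tes-dim}) are exactly right, and the codimension count via the graph on $\{0,1,\dots,n\}$ correctly identifies the annihilator of $\H_n(\0)$ as the span of the hook-sum functionals and shows that ``no zero rows'' connects every vertex to $0$ by downward induction (the base case $p=n$ using that $(n,n)$ cannot lie in $S$). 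The one point worth making explicit when you write this up is the passage from linear independence of the functionals $\{m_{i,j}\}_{(i,j)\in S}$ on $\lin(\H_n(\ba))$ to the codimension of the \emph{face}: this needs that $F_S$ is full-dimensional in the affine slice $\H_n(\ba)\cap\bigcap_{(i,j)\in S}H_{i,j}^n$, which your interior point supplies since a small perturbation within that slice stays in $\tes_n(\ba)$. What your approach buys is independence from the flow-polytope machinery and a proof that visibly works for all $\ba\in\R_{>0}^n$ with no integrality hypothesis; what the reference's approach buys is that the same support formalism simultaneously yields the full face poset and the vertex description used elsewhere in \cite{Meszaros2017}.
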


We remark that when we apply Theorem \ref{Tesler}, the hyperplane $H_{n,n}^n$ should never appear in the expression \eqref{eq:faceform} as it automatically makes a zero row. Hence, $\tes_n(\ba) \cap H_{n,n}^n$ is not a facet of $\tes_n(\ba).$ On the other hand, it is easy to see that if $(i,j) \neq (n,n)$, the intersection $\tes_n(\ba) \cap H_{i,j}^n$ is a facet. Hence, we have the following result as a consequence to Theorem \ref{Tesler}.
\begin{cor}\label{cor:Tesler-ineq}
  Let $\ba \in \R_{>0}^n.$ Then the Tesler polytope $\tes_n(\ba)$ has the following inequality description:
\begin{equation*}
\tes_{n}(\ba) = \biggl\{\MM=(m_{i,j}) \in \H_n(\ba)~~\biggl{|}~~m_{i,j} \geq 0\text{  for } {1 \le i < j \le n \text{ or } 1 \leq i = j \leq n-1} \biggl\},
\end{equation*}
in which each inequality defines a facet.
\end{cor}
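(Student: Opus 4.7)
The plan is to derive the corollary as a quick consequence of Theorem \ref{Tesler}, preceded by a short redundancy argument that eliminates the constraint $m_{n,n} \ge 0$.

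First, I would argue that $m_{n,n} \ge 0$ is redundant given $\MM \in \H_n(\ba)$ and the remaining nonnegativity constraints. The $n$-th hook-sum condition reads
\[ a_n = \hs_n(\MM) = m_{n,n} - \sum_{i=1}^{n-1} m_{i,n}, \]
so $m_{n,n} = a_n + \sum_{i=1}^{n-1} m_{i,n}$. Since $\ba \in \R_{>0}^n$ forces $a_n > 0$, and since $m_{i,n} \ge 0$ for $1 \le i \le n-1$ are already among the retained inequalities, we conclude $m_{n,n} \ge a_n > 0$ automatically. Thus the inequality $m_{n,n} \ge 0$ in Definition \ref{defn:tesler} can be dropped without changing the set, which yields the inequality description displayed in the corollary.

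Second, I would invoke Theorem \ref{Tesler} at codimension $k = 1$ to verify that each retained inequality is facet-defining, i.e., that $\tes_n(\ba) \cap H_{i,j}^n$ is a facet for every pair $(i,j) \ne (n,n)$ with $1 \le i \le j \le n$. By Theorem \ref{Tesler}, this reduces to checking that a single hyperplane $H_{i,j}^n$ with $(i,j)\neq (n,n)$ ``does not make any zero rows,'' i.e., is not contained in $H_{l,l}^n \cap H_{l,l+1}^n \cap \cdots \cap H_{l,n}^n$ for any $1 \le l \le n$. Since $H_{i,j}^n$ has codimension $1$ in $\U(n)$, such a containment can occur only when the right-hand intersection also has codimension $1$, which forces $l = n$ and reduces that intersection to $H_{n,n}^n$; then $H_{i,j}^n \subseteq H_{n,n}^n$ would force $(i,j) = (n,n)$, contradicting our choice. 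Hence for every admissible $(i,j)$, the hyperplane $H_{i,j}^n$ does not make a zero row, and $\tes_n(\ba) \cap H_{i,j}^n$ is a facet of $\tes_n(\ba)$.

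There is no substantive obstacle here: the corollary is essentially a bookkeeping consequence of the facet classification in Theorem \ref{Tesler}, together with the elementary observation that the $n$-th diagonal entry is forced to be strictly positive by the hook-sum equation and the remaining inequalities. The only place a bit of care is needed is tracking \emph{why} the entry $(n,n)$ plays a distinguished role, which the ``no zero row'' criterion explains cleanly.
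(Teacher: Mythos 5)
Your proof is correct and follows essentially the same route as the paper: both deduce the facet claims from Theorem \ref{Tesler} by observing that $H_{n,n}^n$ is the unique hyperplane that automatically ``makes a zero row,'' while every other $H_{i,j}^n$ does not. Your explicit verification that $m_{n,n}\ge 0$ is forced by the $n$-th hook-sum equation together with $m_{i,n}\ge 0$ is a nice touch that the paper leaves implicit, but it does not change the substance of the argument.
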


The following lemma states well-known results about the Tesler polytopes. However, we are not able to find a direct reference for it. So we provide a sketch of its proof without introducing all the terminologies and results involved. 
\begin{lem}\label{lem:tes-totaluni}
Let $\ba \in \R_{\ge 0}^n$. Then the Tesler polytope $\tes_n(\ba)$ is integral if and only if $\ba \in \Z_{\ge 0}^n$.
Furthermore, if $\ba \in \Z_{>0}^n$, the Tesler polytope $\tes_n(\ba)$ is a totally unimodular polytope.
\end{lem}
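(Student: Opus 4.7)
The plan is to realize $\tes_n(\ba)$ as the flow polytope of an acyclic directed graph and then invoke the classical fact that such flow polytopes with integer netflows are integral and have unimodular feasible cones at every vertex. Concretely, I would first rewrite the hook-sum equations $\Hs(\MM)=\ba$ as a linear system $A\bx=\ba$, where $\bx$ records the entries $m_{i,j}$ with $i\le j$, and the column of $A$ indexed by $(i,j)$ is $\be_i-\be_j$ when $i<j$ and $\be_i$ when $i=j$. After adjoining a sink vertex $\star$, the matrix $A$ becomes (the truncation of) the vertex--edge incidence matrix of the acyclic digraph on $\{1,\dots,n,\star\}$ whose edges are $i\to j$ for $i<j$ together with $i\to \star$ for each $i$, and incidence matrices of directed graphs are totally unimodular (every square submatrix has determinant $0$ or $\pm 1$).

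For the iff statement, the backward direction follows from the classical LP theorem: if $A$ is totally unimodular and $\ba\in\Z^n$, then every vertex of $\{\bx:A\bx=\ba,\ \bx\ge 0\}$ is an integer point, hence $\tes_n(\ba)$ is integral. The forward direction is immediate: by Lemma~\ref{lem:Tes-dim}, $\tes_n(\ba)$ is nonempty and therefore has a vertex $\MM$; if $\MM$ is an integer matrix, then $\ba=\Hs(\MM)$ has integer entries, and combined with $\ba\in\R_{\ge 0}^n$ this forces $\ba\in\Z_{\ge 0}^n$.

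For the total-unimodularity statement when $\ba\in\Z_{>0}^n$, I would argue vertex by vertex. At a vertex $\bv$, fix a column basis $B$ of $A$ corresponding to the positive coordinates of $\bv$. Since $A$ is totally unimodular, $\det(A_B)=\pm 1$ and $A_B^{-1}$ has integer entries, so for each nonbasic coordinate $j$ the edge direction $\by^{(j)}$ is an integer vector satisfying $\by^{(j)}_j=1$, $\by^{(j)}_{j'}=0$ at other nonbasic coordinates $j'$, and $\by^{(j)}_B=-A_B^{-1}A_{\cdot,j}$; in particular every such $\by^{(j)}$ is primitive. After permuting so that the nonbasic coordinates come first, the square matrix whose columns are the $\by^{(j)}$ followed by the standard unit vectors $\be^{(B_i)}$ is block lower-triangular with identity diagonal blocks, hence has determinant $1$. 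Thus the primitive edge directions at $\bv$ extend to a $\Z$-basis of $\Z^{\binom{n+1}{2}}$, and $\fcone(\bv,\tes_n(\ba))$ is unimodular in the sense of Definition~\ref{defn:unimodular}.

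The main obstacle is the implicit \emph{simplicity} claim that must accompany the argument above: to recover unimodularity of $\fcone(\bv,\tes_n(\ba))$ in the sense of the paper, I need each vertex to be contained in exactly $\binom{n}{2}$ facet hyperplanes (equivalently, no basic coordinate vanishes at $\bv$). I expect this non-degeneracy to follow from the strict positivity $\ba\in\Z_{>0}^n$ combined with Theorem~\ref{Tesler}: the no-zero-row requirement on any collection of tight hyperplanes at a vertex, together with $a_i>0$ for every $i$, should force the basic coordinates at $\bv$ to be strictly positive. Verifying this simplicity carefully, and checking that the inequality description of $\tes_n(\ba)$ given by Corollary~\ref{cor:Tesler-ineq} matches the basic-feasible-solution description above, is the point that will require the most attention in a full write-up.
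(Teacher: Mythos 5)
Your proposal follows essentially the same route as the paper's sketch: both rest on the total unimodularity of the hook-sum constraint matrix (the incidence matrix of a complete acyclic digraph with an identity block appended, cf.\ \cite[Chapter 19]{Schrijver86}), which yields the integrality equivalence, combined with simplicity of $\tes_n(\ba)$ for $\ba\in\Z_{>0}^n$ to obtain unimodular feasible cones at the vertices. The only real difference is that the paper cites simplicity from \cite[Theorem 1.7]{Meszaros2017} while you propose to derive it from Theorem~\ref{Tesler} and the strict positivity of the $a_i$ --- which does go through, since $a_i>0$ forbids a zero row at any vertex, so the set of vanishing coordinates makes no zero rows and must have size exactly $\binom{n}{2}$ --- and you spell out the basic-feasible-solution lattice-basis argument that the paper's sketch leaves implicit.
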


\begin{proof}[Sketch of Proof]
Note that if we treat $\MM$ as a $\binom{n+1}{2}$-dimensional vector ${\bf M}$, the hook sum condition $\Hs(\MM) =\ba$ can be written as $L {\bf M} = \ba,$ where $L=(l_{k,(i,j)})$ is an $n \times \binom{n+1}{2}$ matrix obtained from the incidence matrix of a directed complete graph on $n$ vertices by appending an identity matrix of size $n$. It is a classical result \cite[Chapter 19]{Schrijver86} that such a matrix is totally unimodular. Hence, our first conclusion follows.

Next, for any $\ba \in \Z_{>0}^{n}$, it follows from \cite[Theorem 1.7]{Meszaros2017} that the Tesler polytope $\tes_{n}(\ba)$ is simple. This together with the fact that $L$ is totally unimodular leads to the second conclusion.
\end{proof}

\subsection{Projected Tesler polytopes and facet normal vectors} \label{subsec:ptes}

We start by defining the projection map we will use in this part. 
For any upper triangular $\XX =(x_{i,j}) \in \mathbb{U}(n)$, we define $\psi_{\diag}(\XX)$ to be the upper triangular matrix $\YY \in \mathbb{U}(n-1)$ obtained by ``erasing'' the diagonal line of $\XX$ shown as below:
	\[\XX = \begin{bmatrix}
	    x_{1,1} & x_{1,2} & x_{1,3} & \cdots & x_{1,n-1} & x_{1,n} \\
	    &  x_{2,2} & x_{2,3} & \cdots &x_{2,n-1} & x_{2,n}\\
        &   &\ddots& \ddots & \vdots & \vdots \\
	&   &  & \ddots & \vdots & \vdots \\
	&   &  & & x_{n-1,n-1}& x_{n-1,n}\\
	&    &    &  &      & x_{n,n}
      \end{bmatrix}\xmapsto{\ \psi_{\diag} \ } 
\begin{bmatrix}
x_{1,2} & x_{1,3} & \cdots & x_{1,n} \\
      & x_{2,3} & \cdots & x_{2,n}\\
      &       & \ddots& \vdots \\
      &       &       &x_{n-1,n}\\
\end{bmatrix}=\YY.\]
More formally, the entries of $\psi_{\diag}(\XX) = \YY=(y_{i,j})$ are defined as
	\begin{equation}
	  y_{i,j} = x_{i,j+1} \quad \text{ for } 1 \le i \le j \le n-1. 
\label{eq:psidiag}
	\end{equation}

Now we are ready to define \emph{projected Tesler polytopes}:
\begin{defn}\label{defn:psidiag}
  For any $\ba \in \R_{\ge 0}^n,$ we call $\psi_{\diag}(\tes_n(\ba))$ the \emph{projected Tesler polytope of hook sum $\ba$} and denote it by $\ptes_n(\ba)$.
\end{defn}

\begin{lem} \label{projmap}
  Let $\ba=(a_{1},\dots,a_{n}) \in \R_{\geq 0}^n$.
  Then the restriction map $\psi_{\diag}|_{\H_n(\ba)}$ is a unimodular transformation from $\H_n(\ba)$ to $\mathbb{U}(n-1)$. (Recall $\H_n(\ba)$ is defined in \eqref{eq:defn-Hn}.)

Therefore, $\ptes_n(\ba)$ is unimodularly equivalent to $\tes_n(\ba).$
\end{lem}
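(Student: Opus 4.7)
The plan is to produce an explicit inverse for $\psi_{\diag}|_{\H_n(\ba)}$, check that it is an affine map with $\pm 1$ integer coefficients, and then conclude that both $\psi_{\diag}|_{\H_n(\ba)}$ and its inverse send integer points to integer points.

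First, I would unpack the definition of $\psi_{\diag}$ given in \eqref{eq:psidiag}: on $\U(n)$ it simply forgets the diagonal entries and relabels each off-diagonal entry $x_{i,j}$ (with $i<j$) as $y_{i,j-1}$. In particular, any $\XX \in \H_n(\ba)$ with $\psi_{\diag}(\XX) = \YY = (y_{i,j})$ must satisfy $x_{i,j}=y_{i,j-1}$ for all $1 \le i < j \le n$, so only the diagonal entries of $\XX$ remain free.

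Next, I would use the hook sum conditions $\Hs(\XX) = \ba$ to determine the diagonal entries. Solving $\hs_k(\XX)=a_k$ for $x_{k,k}$ and substituting $x_{i,j}=y_{i,j-1}$ for $i<j$ gives the closed form
\[
x_{k,k} \;=\; a_k \;-\; \sum_{j=k}^{n-1} y_{k,j} \;+\; \sum_{i=1}^{k-1} y_{i,k-1}, \qquad 1 \le k \le n.
\]
This formula, together with $x_{i,j}=y_{i,j-1}$ for $i<j$, defines an affine map $\psi_{\diag}^{-1} \colon \U(n-1) \to \H_n(\ba)$; it lands in $\H_n(\ba)$ by construction, and it is a two-sided inverse to $\psi_{\diag}|_{\H_n(\ba)}$. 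Since every coefficient in $\psi_{\diag}^{-1}$ is $\pm 1$, whenever $\ba \in \Z^n$ and $\YY$ is an integer matrix the output $\XX$ is also an integer matrix. Conversely, $\psi_{\diag}$ itself obviously sends integer matrices to integer matrices. Therefore $\psi_{\diag}|_{\H_n(\ba)}$ is a bijection that induces a bijection between integer points, i.e., a unimodular transformation.

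The second assertion is then immediate: by Definition \ref{defn:psidiag} we have $\ptes_n(\ba)=\psi_{\diag}(\tes_n(\ba))$, and $\tes_n(\ba) \subseteq \H_n(\ba)$ by \eqref{eq:defntesler}, so the restriction of $\psi_{\diag}|_{\H_n(\ba)}$ to $\tes_n(\ba)$ realizes the desired unimodular equivalence. I do not anticipate any serious obstacle; the only step requiring care is the derivation of the closed-form expression for $x_{k,k}$, which must be written out carefully to confirm that all coefficients are indeed $\pm 1$ so that integrality is preserved in both directions.
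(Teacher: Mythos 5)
Your proposal is correct, and in fact the paper does not contain a proof to compare against: the authors explicitly omit the argument as ``elementary'' and defer to the second author's thesis. Your explicit-inverse computation is precisely the verification they are deferring: $\psi_{\diag}$ forgets only the diagonal, the hook-sum equations $\hs_k(\XX)=a_k$ recover each $x_{k,k}$ as an affine expression in the off-diagonal entries with coefficients $\pm 1$ and constant term $a_k$ (your closed form checks out, including the boundary case $k=n$ where the first sum is empty), and both directions visibly preserve integer points. The one caveat worth flagging is that the integrality of the inverse uses $\ba\in\Z^n$: for $\ba\in\R_{\ge 0}^n\setminus\Z^n$ the affine space $\H_n(\ba)$ contains no integer points at all, so the ``bijection on integer points'' required by the paper's definition of unimodular transformation fails (the induced map on integer points is empty while $\U(n-1)$ has integer points). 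This is a defect of the lemma as stated rather than of your argument --- every application in the paper takes $\ba\in\Z_{\ge 0}^n$ --- but if you want your write-up to be airtight you should either add that hypothesis or note that the statement is only meaningful in the integral case.
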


The proof of the above lemma is elementary, and thus is omitted from this paper. One can find a proof of it in \cite[Theorem 3.1.1]{thesis}.

Recall that when $\ba \in \R_{>0}^n,$ for any $(i,j) \neq (n,n)$, the intersection of $\tes_{n}(\ba)$ and the hyperplane $H_{i,j}^{n}$ (which is defined in \eqref{hyperplane}) is a facet of $\tes_n(\ba)$. From now on, we fix a notation for the facet of $\ptes_{n}(\ba)$ that corresponds to the facet $\tes_n(\ba) \cap H_{i,j}(n)$ of $\tes_n(\ba)$ under the transformation $\psi_{\diag}$:
\begin{equation} \label{facetdef}
  F_{i,j}(\ba):=\psi_{\diag}(\tes_{n}(\ba) \cap H_{i,j}^{n}),
\end{equation}
Using Lemma \ref{projmap}, we can easily translate Lemmas \ref{lem:Tes-dim} and \ref{lem:tes-totaluni}, and Theorem \ref{Tesler} to a version for projected Tesler polytopes as below:

\begin{prop}\label{prop:ptes}
  Let $\ba =(a_1, \dots, a_n) \in \R_{\ge 0}^n$. Then the following statements are true.
\begin{enumerate}
  \item \label{itm:pdim} 
    If $a_1 > 0$, i.e., $\ba \in \R_{>0} \times \R_{\ge 0}^{n-1},$ then $\ptes_n(\ba)$ is a full-dimensional polytope in $\U(n-1).$
  \item\label{itm:pint} The projected Tesler polytope $\ptes_n(\ba)$ is integral if and only if $\ba \in \Z_{\ge 0}^n.$
  \item\label{itm:ptot} If $\ba \in \Z_{> 0}^n$, then the projected Tesler polytope $\ptes_n(\ba)$ is totally unimodular. 

  \item\label{itm:pface} If $\ba \in \R_{>0}^n,$ then for any $k$ facets $F_{i_{1},j_{1}}(\ba), \cdots, F_{i_{k},j_{k}}(\ba)$ of $\ptes_n(\ba)$, they are precisely the $k$ supporting facets of a codimension $k$ face of $\ptes_n(\ba)$ if and only if $H_{i_{1},j_{1}}^{n} \cap \cdots \cap H_{i_{k},j_{k}}^{n}$ does not make any zero rows. 
\end{enumerate}
\end{prop}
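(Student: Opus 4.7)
My plan is to derive each part by transporting the corresponding known statement about $\tes_n(\ba)$ across the unimodular transformation $\psi_{\diag}|_{\H_n(\ba)}\colon \H_n(\ba)\to \U(n-1)$ established in Lemma~\ref{projmap}. For part~(1), I would combine Lemma~\ref{lem:Tes-dim} (which gives that $\tes_n(\ba)$ is full-dimensional in $\H_n(\ba)$ whenever $a_1>0$) with the fact that $\psi_{\diag}|_{\H_n(\ba)}$ is an affine bijection onto $\U(n-1)\cong \R^{\binom{n}{2}}$; this immediately forces $\ptes_n(\ba)$ to be full-dimensional in $\U(n-1)$. For part~(2), the definition of unimodular transformation yields a bijection between integer points of $\H_n(\ba)$ and integer points of $\U(n-1)$, so $\tes_n(\ba)$ has only integer vertices if and only if $\ptes_n(\ba)$ does; combining with Lemma~\ref{lem:tes-totaluni} then gives the equivalence with $\ba \in \Z_{\ge 0}^n$. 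For part~(4), the affine bijection $\psi_{\diag}$ induces a face-lattice isomorphism between $\tes_n(\ba)$ and $\ptes_n(\ba)$ that, by definition of $F_{i,j}(\ba)$, sends $\tes_n(\ba)\cap H_{i,j}^n$ to $F_{i,j}(\ba)$; Theorem~\ref{Tesler} then translates directly to the desired characterization of supporting facets of any codimension $k$ face of $\ptes_n(\ba)$.

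Part~(3) is the main obstacle, because total unimodularity refers to the \emph{ambient} lattice and the ambient dimensions of $\tes_n(\ba)\subset \R^{\binom{n+1}{2}}$ and $\ptes_n(\ba)\subset \R^{\binom{n}{2}}$ differ, so the transfer is not purely formal. My plan is as follows. Fix a vertex $\MM$ of $\tes_n(\ba)$ and set $\by = \psi_{\diag}(\MM)$, the corresponding vertex of $\ptes_n(\ba)$. By Lemma~\ref{lem:tes-totaluni}, the generators $\bd_1,\dots,\bd_d$ of $\fcone(\MM,\tes_n(\ba))$ (with $d=\binom{n}{2}$) can be extended to a $\Z$-basis of $\Z^{\binom{n+1}{2}}$, and they lie in $\lin(\H_n(\ba)) = \H_n(\0)$. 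I would first show that $\bd_1,\dots,\bd_d$ in fact form a $\Z$-basis for the sublattice $\Z^{\binom{n+1}{2}}\cap \H_n(\0)$: given any $\bv$ in this sublattice, expand it in the extended basis and project modulo $\H_n(\0)$; the images of $\bd_1,\dots,\bd_d$ vanish while the remaining extended basis vectors project to a linearly independent set, so the corresponding integer coefficients must be zero. Next, since the linear part of the unimodular map $\psi_{\diag}$ restricts to a lattice isomorphism from $\Z^{\binom{n+1}{2}}\cap \H_n(\0)$ onto $\Z^{\binom{n}{2}}$, the images $\psi_{\diag}(\bd_1),\dots,\psi_{\diag}(\bd_d)$ form a $\Z$-basis of $\Z^{\binom{n}{2}}$, and they generate $\fcone(\by,\ptes_n(\ba))$. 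Combined with integrality from part~(2), this yields the total unimodularity asserted in~(3).
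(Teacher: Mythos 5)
Your proposal is correct and follows essentially the same route as the paper, which simply asserts that Lemmas \ref{lem:Tes-dim} and \ref{lem:tes-totaluni} and Theorem \ref{Tesler} ``translate'' across the unimodular transformation $\psi_{\diag}|_{\H_n(\ba)}$ of Lemma \ref{projmap} and gives no further argument. The only difference is that you explicitly supply the lattice-theoretic detail the paper leaves implicit for part (3) --- that the primitive edge directions at a vertex form a $\Z$-basis of $\Z^{\binom{n+1}{2}}\cap\H_n(\0)$ and hence map to a $\Z$-basis of $\Z^{\binom{n}{2}}$ --- and that argument is sound.
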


Similarly, we can convert Corollary \ref{cor:Tesler-ineq} - the linear inequality description for $\tes_n(\ba)$ with all inequalities being facet-defining - to one for $\ptes_n(\ba),$ which provides us a facet normal vector for each facet $F_{i,j}(\ba)$ of $\ptes_{n}(\ba)$.

\begin{defn} \label{shiftedhs}
	    Let $\be_{i,j}^n \in \U(n)$ be the matrix in which the $(i,j)$-th entry is $1$ and all other entries are $0$.
	    For $1 \leq k \leq n$, define the \emph{$k$-th shifted hook sum matrix} to be
    \[\sm_k^{n}:=\sum_{j=k}^{n} \be_{k,j}^{n} - \sum_{i=1}^{k-1} \be_{i,k-1}^{n} \in \U(n).\]
\end{defn}
\begin{prop} \label{prop:projnormal}
	Let $\ba=(a_{1},\dots,a_{n}) \in \R_{> 0}^n$. 
    Then 
    \[ \ptes_n(\ba) = \biggl\{\YY \in \U(n-1)~~\biggl{|}~~
	    \begin{array}{lcl}
		    & \langle \sm_i^{n-1}, \YY \rangle \le a_i \text{ for all }1 \le i \le n-1\\[2mm]
		    & \langle -\be_{i,j-1}^{n-1}, \YY \rangle \le 0 \text{  for all }1 \leq i < j \leq n\\
 \end{array}
 \biggl\},\]
 where the inequality $\langle \sm_i^{n-1}, \YY \rangle \le a_i$ defines the facet $F_{i,i}(\ba)$ and the inequality $\langle -\be_{i,j-1}^{n-1}, \YY \rangle \le 0$ defines the facet $F_{i,j}(\ba).$

Hence, 
\begin{equation}\label{eq:defnnormal}
  \bn_{i,j}:=\begin{cases}
      \sm_i^{n-1}  & \text{if } {1 \le i=j \le n-1} \\
      -\be_{i,j-1}^{n-1} & \text{if } {1\le i<j \le n} 
    \end{cases} 
  \end{equation}
    is the primitive outer normal vector of the facet $F_{i,j}(\ba)$. 
\end{prop}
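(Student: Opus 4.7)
The plan is to transport the facet description of $\tes_n(\ba)$ given in Corollary \ref{cor:Tesler-ineq} through the unimodular transformation $\psi_{\diag}$. By Lemma \ref{projmap}, the restriction of $\psi_{\diag}$ to $\H_n(\ba)$ is an affine bijection onto $\U(n-1)$, so each inequality cutting out a facet of $\tes_n(\ba)$ will translate into exactly one facet-defining inequality for $\ptes_n(\ba)$, and by \eqref{facetdef} this facet is precisely $F_{i,j}(\ba)$.

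The off-diagonal constraints are essentially a relabeling. For $1\le i < j \le n$, the formula \eqref{eq:psidiag} gives $m_{i,j}=y_{i,j-1}$, so $m_{i,j}\ge 0$ immediately becomes $\langle -\be_{i,j-1}^{n-1},\YY\rangle \le 0$, which matches the claimed normal $\bn_{i,j}$.

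The substantive step is the diagonal case $1\le i\le n-1$. I would solve the $i$-th hook-sum equation $\hs_i(\MM)=a_i$ for $m_{i,i}$, obtaining
\[ m_{i,i} = a_i - \sum_{j=i+1}^{n} m_{i,j} + \sum_{k=1}^{i-1} m_{k,i} = a_i - \sum_{j=i}^{n-1} y_{i,j} + \sum_{k=1}^{i-1} y_{k,i-1},\]
where the second equality uses \eqref{eq:psidiag}. Thus $m_{i,i}\ge 0$ rearranges to
\[ \sum_{j=i}^{n-1} y_{i,j} - \sum_{k=1}^{i-1} y_{k,i-1} \le a_i,\]
and by Definition \ref{shiftedhs} the left-hand side is exactly $\langle \sm_i^{n-1},\YY\rangle$. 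Hence $\bn_{i,i}=\sm_i^{n-1}$ is an outer normal for $F_{i,i}(\ba)$. Note that the missing case $i=n$ is consistent: the constraint $m_{n,n}\ge 0$ does not appear in Corollary \ref{cor:Tesler-ineq} because it is redundant (the $n$-th hook sum equation expresses $m_{n,n}$ as $a_n$ plus non-negative quantities), which matches the index range $1\le i\le n-1$ in the statement.

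Each of the inequalities produced this way is facet-defining for $\ptes_n(\ba)$ because the corresponding inequality is facet-defining for $\tes_n(\ba)$ by Corollary \ref{cor:Tesler-ineq} and $\psi_{\diag}$ is an affine isomorphism. Finally, primitivity of the normals is immediate: $-\be_{i,j-1}^{n-1}$ has a single nonzero entry equal to $-1$, and $\sm_i^{n-1}$ has entries in $\{-1,0,+1\}$ with at least one nonzero entry, so in both cases the gcd of the entries equals $1$. The only step requiring care is the index bookkeeping when converting the diagonal inequality $m_{i,i}\ge 0$ into a linear form in $\YY$; everything else follows directly from the definitions, from Corollary \ref{cor:Tesler-ineq}, and from Lemma \ref{projmap}.
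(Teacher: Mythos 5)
Your proposal is correct and follows essentially the same route as the paper: both reduce to Corollary \ref{cor:Tesler-ineq} plus Lemma \ref{projmap} and then convert each facet inequality of $\tes_n(\ba)$ through $\psi_{\diag}$, with the only substantive computation being the substitution of the $i$-th hook-sum equation to rewrite $m_{i,i}\ge 0$ as $\langle \sm_i^{n-1},\YY\rangle\le a_i$. Your index bookkeeping in that step checks out, and the added remarks on the redundancy of $m_{n,n}\ge 0$ and on primitivity are correct (the paper leaves the latter implicit).
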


\begin{proof}
  By Corollary \ref{cor:Tesler-ineq} and Lemma \ref{projmap}, it is enough to show the following two statements are true:
  \begin{enumerate}[label=(\roman*)]
    \item For any $1 \le i \le n-1,$ the image of the halfspace $\{ \XX=(x_{l,k}) \in \H_n(\ba) \ | \ x_{i,i} \ge 0 \}$ of $\H_n(\ba)$ under $\psi_{\diag}$ is precisely the halfspace $\{ \YY \in \U(n-1) \ | \  \langle \sm_{i,}^{n-1}, \YY \rangle \le a_i\}$ of $\U(n-1)$.
    \item For any $1 \le i < j \le n,$ the image of the halfspace $\{ \XX=(x_{l,k}) \in \H_n(\ba) \ | \ x_{i,j} \ge 0 \}$ of $\H_n(\ba)$ under $\psi_{\diag}$ is precisely the halfspace $\{ \YY \in \U(n-1) \ | \  \langle -\be_{i,j-1}^{n-1}, \YY \rangle \le 0\}$ of $\U(n-1)$.
  \end{enumerate}
  We only proves (i) here, as the proof for (ii) is similar (and simpler). One sees that (i) is equivalent to that for any $1 \le i \le n-1$ and for any $\XX = (x_{l,k}) \in \H_n(\ba)$,
  \[ x_{i,i} \ge 0 \quad \Longleftrightarrow \quad \langle \sm_{i,}^{n-1}, \psi_{\diag}(\XX) \rangle \le a_i.\] 
  However, since $\XX \in \H_n(\ba)$, we have $\hs_i(\XX) = a_i,$ i.e.,
\[ (x_{i,i}+x_{i,i+1}+\cdots+x_{i,n}) - (x_{1,i}+x_{2,i}+\cdots+x_{i-1,i}) = a_i.\]
Hence,
\[ x_{i,i} \ge 0 \quad \Longleftrightarrow \quad  (x_{i,i+1}+\cdots+x_{i,n}) - (x_{1,i}+x_{2,i}+\cdots+x_{i-1,i}) \le a_i,\]
where one can verify that the latter is exactly $\langle \sm_{i,}^{n-1}, \psi_{\diag}(\XX) \rangle \le a_i.$
\end{proof}

\begin{cor}\label{cor:ptesnormalfan}
  For any $\ba, \bb \in \R_{>0}^n,$ the projected Tesler polytopes $\ptes_n(\ba)$ and $\ptes_n(\bb)$ have the same normal fan.
  \end{cor}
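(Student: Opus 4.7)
The key observation is that Proposition \ref{prop:projnormal} gives a facet description of $\ptes_n(\ba)$ in which the primitive outer facet normal vectors $\bn_{i,j}$, as defined in \eqref{eq:defnnormal}, depend only on the indices $i,j$ and on $n$, but not on $\ba$ itself. In other words, the right-hand sides of the defining inequalities are the only part of the description that varies with $\ba$. Hence, as $\ba$ varies over $\R_{>0}^n$, the family $\{\ptes_n(\ba)\}$ is obtained by translating facets in parallel.

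The plan is to combine this observation with the combinatorial face description. By Proposition \ref{prop:ptes}\eqref{itm:pface}, a collection of facets $F_{i_1,j_1}(\ba), \ldots, F_{i_k,j_k}(\ba)$ constitutes the supporting facets of a codimension $k$ face of $\ptes_n(\ba)$ if and only if $H_{i_1,j_1}^n \cap \cdots \cap H_{i_k,j_k}^n$ does not make any zero rows, a condition that is manifestly independent of $\ba$. Therefore the ``incidence data'' (which tuples of facet indices correspond to a face) is identical for $\ptes_n(\ba)$ and $\ptes_n(\bb)$.

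Now I would invoke Lemma \ref{lem:construct-cones}\eqref{itm:construct-ncone}: for every face $F$ of $\ptes_n(\ba)$ with supporting facets indexed by some set $S \subseteq \{(i,j)\}$, the normal cone $\ncone(F, \ptes_n(\ba))$ is generated by the vectors $\{\bn_{i,j} : (i,j) \in S\}$. Since this generating set depends only on $S$ and $n$, and since the set of all such admissible $S$ is the same for $\ba$ and $\bb$, the collection of normal cones --- that is, the normal fan --- of $\ptes_n(\ba)$ coincides with that of $\ptes_n(\bb)$.

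There is no real obstacle here; the argument is essentially a bookkeeping consequence of Propositions \ref{prop:projnormal} and \ref{prop:ptes} together with Lemma \ref{lem:construct-cones}. The only mild subtlety is ensuring the two polytopes are compared in the same ambient space, which is automatic since both sit in $\U(n-1)$ and, by Proposition \ref{prop:ptes}\eqref{itm:pdim}, are full-dimensional there whenever $\ba,\bb \in \R_{>0}^n$, so the ambient space for the normal fan is unambiguously $\U(n-1)$ in both cases.
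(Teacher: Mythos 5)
Your argument is correct and is essentially identical to the paper's own proof: both deduce from Proposition \ref{prop:projnormal} that the facet normals (hence the rays of the normal fan) are independent of $\ba$, and then combine Proposition \ref{prop:ptes}\eqref{itm:pface} with Lemma \ref{lem:construct-cones}\eqref{itm:construct-ncone} to conclude that all higher-dimensional normal cones agree. The extra remark about the common ambient space $\U(n-1)$ is a harmless addition.
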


  \begin{proof}
    By the second part of Proposition \ref{prop:projnormal}, one sees that the normal fans of $\ptes_n(\ba)$ and $\ptes_n(\bb)$ have exactly the same one dimensional cones.
    It then follows from Lemma \ref{lem:construct-cones}/\eqref{itm:construct-ncone} and Proposition \ref{prop:ptes}/\eqref{itm:pface}, all other dimensional cones in these two normal fans are the same.
  \end{proof}

\subsection{BV-$\alpha$ values of projected Tesler polytopes} \label{subsec:bvptes}
By Part \eqref{itm:pint} of Proposition \ref{prop:ptes}, we know that for any $\ba \in \Z_{> 0}^n,$ the projected Tesler polytope $\ptes_n(\ba)$ is integral, and hence we can discuss the BV-$\alpha$ values arising from them. Moreover, it follows from Corollary \ref{cor:ptesnormalfan} that $\ptes_n(\ba)$ and $\ptes_n(\1)$ have the same normal fan, and thus by Lemma \ref{lem:nfcone} they share exactly the same BV-$\alpha$ values.  
For simplicity and convenience, below we will present our positivity results on BV-$\alpha$ values on $\ptes_n(\1)$ only, knowing that these results are true for any $\ptes_n(\ba)$ where $\ba \in \Z_{> 0}^n.$

First, we have seen in Proposition \ref{prop:projnormal} that any facet $F_{i,j}(\1)$ of $\ptes_n(\1)$ with $i<j$ has its outer normal as a member of the standard basis up to sign.
Then using \cite[Example 3.15]{Castillo2015v1}, we get the BV-$\alpha$ values for every face whose supporting facets are all of this form. 

\begin{lem} \label{cube}
	Suppose $F$ is a codimension $k$ face of $\ptes_{n}(\1)$, and its supporting facets are $F_{i_{1},j_{1}}(\1),\dots,F_{i_{k},j_{k}}(\1)$ where $i_{l}<j_{l}$ for all $1 \le l \le k.$ Then \[\bvalpha(F, \ptes_n(\1))=\dfrac{1}{2^k}.\]
\end{lem}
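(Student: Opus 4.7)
The plan is to show that $\fcone^p(F,\ptes_n(\1))$ is a unimodular cone whose primitive generators form an orthonormal set; once this is established, the pointed feasible cone together with its ambient lattice is isomorphic to the standard positive orthant $(\R_{\geq 0}^k, \Z^k)$, and the value $1/2^k$ follows immediately from \cite[Example 3.15]{Castillo2015v1}.

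First I would read off the outer normals of the supporting facets of $F$ from Proposition \ref{prop:projnormal}. Since $i_l < j_l$ for every $l$, the primitive outer normal of $F_{i_l,j_l}(\1)$ is $\bn_{i_l,j_l} = -\be_{i_l,j_l-1}^{n-1}$, a negated standard basis vector of $\U(n-1) \cong \R^{\binom{n}{2}}$. Distinctness of the $k$ supporting facets of $F$ forces the pairs $(i_l, j_l-1)$ to be pairwise distinct, so $\bn_{i_1,j_1},\dots,\bn_{i_k,j_k}$ are $k$ pairwise orthogonal unit vectors. By Lemma \ref{lem:construct-cones}\eqref{itm:construct-ncone}, these vectors generate $\ncone(F,\ptes_n(\1))$, so the MDP of $\ncone(F,\ptes_n(\1))$ is the $k\times k$ identity matrix $I_k$.

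Next I would invoke the machinery of \S\ref{subsec:simplecomp}. By Proposition \ref{prop:ptes}\eqref{itm:pdim} and \eqref{itm:ptot}, $\ptes_n(\1)$ is a full-dimensional totally unimodular polytope in $\U(n-1)$, so Corollary \ref{CMinv-restate} applies and gives that the MDP of $\fcone^p(F,\ptes_n(\1))$ equals $I_k^{-1}=I_k$. Hence the primitive generators $\bu_1,\dots,\bu_k$ of $\fcone^p(F,\ptes_n(\1))$ satisfy $\langle \bu_i,\bu_j\rangle = \delta_{ij}$ and form an orthonormal $\Z$-basis of the lattice $\Z^{\binom{n}{2}}/\lin(F)$. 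Consequently $\fcone^p(F,\ptes_n(\1))$ together with its ambient lattice is lattice-orthogonally isomorphic to the standard pair $(\R_{\geq 0}^k, \Z^k)$, and \cite[Example 3.15]{Castillo2015v1} then yields $\bvalpha(F,\ptes_n(\1)) = 1/2^k$.

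I do not expect a substantive obstacle: once the facet normals are identified as negated standard basis vectors via Proposition \ref{prop:projnormal}, the dot-product inversion provided by Corollary \ref{CMinv-restate} immediately reduces the computation of $\bvalpha$ to the orthant case handled by the cited example. An alternative route would be to exhibit a vertex of $F$ and compute its primitive edge directions modulo $\lin(F)$ directly via Lemma \ref{innerprod}, but routing through normal cones (where the inequality description from Proposition \ref{prop:projnormal} makes the generators transparent) keeps the argument cleanest.
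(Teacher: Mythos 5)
Your proof is correct and follows essentially the same route as the paper, which likewise observes that the outer normals $\bn_{i_l,j_l}=-\be_{i_l,j_l-1}^{n-1}$ are signed standard basis vectors and then invokes \cite[Example 3.15]{Castillo2015v1}. Your extra step of passing through Corollary \ref{CMinv-restate} to confirm that the pointed feasible cone is a unimodular orthogonal cone is a harmless (and correct) elaboration of the same idea.
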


By Proposition \ref{prop:ptes}, the projected Tesler polytope $\ptes_{n}(\1)$ is totally unimodular, and is full-dimensional in $\U(n-1)$. Therefore, we can apply the procedure described in \S \ref{subsec:simplecomp} to calculate the BV-$\alpha$ values of codimension $2$ and codimension $3$ faces of $\ptes_{n}(\boldsymbol{1})$. 
Given Lemma \ref{cube}, we only consider faces of $\ptes(\1)$ where at least one of its supporting facets is $F_{l,l}(\ba)$ for some $1 \le l \le n-1$. We will calculate the BV-$\alpha$ values of faces we consider case by case, and use the following terminologies in our description of cases. 

Suppose $1 \le i < j \le n$ and $1 \le l \le n-1.$ We say that the position $(i,j)$ (of an upper triangular matrix) is \emph{on the $l$-th hook} if either $i=l$ or $j=l$. More specifically, we say $(i,j)$ is \emph{on the row of the $l$-th hook} if $i=l$ and $(i,j)$ is \emph{on the column of the $l$-th hook} if $j=l$. It is easy to verify that 
\[ \langle \bn_{l,l}, \bn_{i,j} \rangle =\left\langle \sm^{n-1}_{l}, -\be^{n-1}_{i,j-1} \right\rangle
  =\begin{cases}
    -1 & \quad \text{if $(i,j)$ is on the row of the $l$-th hook}; \\
    1 & \quad \text{if $(i,j)$ is on the column of the $l$-th hook}; \\
    0 & \quad \text{if $(i,j)$ is not on the $l$-th hook}.
  \end{cases}
\]
This result will be used repeatedly in calculations involved in the proof of Lemmas \ref{alphacd2} and \ref{alphacd3} below.

\begin{lem} \label{alphacd2} Assume $n \ge 3.$
Suppose $F$ is a codimension $2$ face of $\ptes_{n}(\boldsymbol{1})$, and its two supporting facets are $F_{l,l}(\1)$ and $F_{i,j}(\1)$ (with $i \le j$). We compute $\bvalpha(F,\ptes_{n}(\boldsymbol{1}))$ for all possible cases below.
\begin{enumerate}[leftmargin=*, label={\rm (\arabic*)}]
	\item Suppose $i <j$. There are three subcases.   
        \begin{enumerate}[label={\rm (\roman*)}]
            \item If $(i,j)$ is on the row of the $l$-th hook, then $\bvalpha(F,\ptes_{n}(\boldsymbol{1}))=\dfrac{1}{4}+\dfrac{1}{12}\biggl{(}\dfrac{n}{n-1}\biggl{)}$.
            \item If $(i,j)$ is on the column of the $l$-th hook, then $\bvalpha(F,\ptes_{n}(\boldsymbol{1}))=\dfrac{1}{4}-\dfrac{1}{12}\biggl{(}\dfrac{n}{n-1}\biggl{)}$.
            \item If $(i,j)$ is not on the $l$-th hook, then $\bvalpha(F,\ptes_{n}(\boldsymbol{1}))=\dfrac{1}{4}$.
        \end{enumerate}
    \item Suppose $i=j.$ Then $\bvalpha(F,\ptes_{n}(\boldsymbol{1}))=\dfrac{1}{4}+\dfrac{1}{6(n-1)}$.
\end{enumerate}
Hence, $\bvalpha(F,\ptes_{n}(\boldsymbol{1}))$ is positive for codimension $2$ faces of $\ptes_n(\1)$.
\end{lem}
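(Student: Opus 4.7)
The plan is to apply Lemma \ref{cd2} in combination with Corollary \ref{CMinv-restate}. Since $\ptes_n(\1)$ is totally unimodular (Proposition \ref{prop:ptes}/\eqref{itm:ptot}) and full-dimensional in $\U(n-1)$ (Proposition \ref{prop:ptes}/\eqref{itm:pdim}), Corollary \ref{CMinv-restate} tells us that the MDP $M=(\langle \bu_i,\bu_j\rangle)$ of $\fcone^p(F,\ptes_n(\1))$ is the inverse of the MDP $C=(\langle \bn_{i_s,j_s},\bn_{i_t,j_t}\rangle)$ of $\ncone(F,\ptes_n(\1))$, where the entries of $C$ come from the explicit primitive outer normals $\bn_{i,j}$ listed in Proposition \ref{prop:projnormal}. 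This reduces the entire computation to inverting a $2\times 2$ matrix of integers.

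First I would record the dot products of the relevant normals. A direct count from Definition \ref{shiftedhs} gives $\langle \sm_l^{n-1},\sm_l^{n-1}\rangle=(n-l)+(l-1)=n-1$, while $\langle -\be_{i,j-1}^{n-1},-\be_{i,j-1}^{n-1}\rangle=1$. The cross products $\langle \bn_{l,l},\bn_{i,j}\rangle$ in case (1) are exactly the three values $-1,+1,0$ listed in the trichotomy right before the lemma. For case (2), assuming without loss of generality $l<i$, the supports of $\sm_l^{n-1}$ and $\sm_i^{n-1}$ share only one position, namely $(l,i-1)$, on which the values are $+1$ and $-1$; hence $\langle \sm_l^{n-1},\sm_i^{n-1}\rangle=-1$.

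Next I would assemble $C$ and invert. In case (1) with $c:=\langle \bn_{l,l},\bn_{i,j}\rangle\in\{-1,0,+1\}$,
\[ C=\begin{pmatrix} n-1 & c \\ c & 1\end{pmatrix},\qquad M=C^{-1}=\frac{1}{(n-1)-c^2}\begin{pmatrix} 1 & -c \\ -c & n-1\end{pmatrix},\]
so that $\langle \bu_1,\bu_2\rangle/\langle \bu_1,\bu_1\rangle=-c$ and $\langle \bu_1,\bu_2\rangle/\langle \bu_2,\bu_2\rangle=-c/(n-1)$. Plugging into Lemma \ref{cd2} gives
\[\bvalpha(F,\ptes_n(\1))=\frac{1}{4}-\frac{c}{12}\cdot\frac{n}{n-1},\]
which matches the three subcases (i), (ii), (iii) for $c=-1,+1,0$ respectively. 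In case (2),
\[ C=\begin{pmatrix} n-1 & -1 \\ -1 & n-1\end{pmatrix},\qquad M=\frac{1}{n(n-2)}\begin{pmatrix} n-1 & 1 \\ 1 & n-1\end{pmatrix},\]
both ratios equal $1/(n-1)$, and Lemma \ref{cd2} yields the claimed $\frac{1}{4}+\frac{1}{6(n-1)}$.

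Positivity is immediate in subcases (1)(i), (1)(iii), and (2). The only case requiring any verification is (1)(ii), where positivity amounts to $\frac{n}{12(n-1)}<\frac{1}{4}$, equivalently $3(n-1)>n$, i.e., $n>3/2$, which holds under the hypothesis $n\ge 3$. The main (mild) obstacle is simply ensuring the inner products between the shifted hook-sum matrices $\sm_l^{n-1}$ and $\sm_i^{n-1}$ are computed correctly; everything else is linear algebra packaged cleanly by Corollary \ref{CMinv-restate}, which is exactly the payoff of the framework developed in \S \ref{subsec:simplecomp}.
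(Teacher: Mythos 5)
Your proposal is correct and follows essentially the same route as the paper: compute the MDP of $\ncone(F,\ptes_n(\1))$ from the explicit normals in Proposition \ref{prop:projnormal}, invert it via Corollary \ref{CMinv-restate} to get the dot products of the primitive generators of $\fcone^p(F,\ptes_n(\1))$, and plug into Lemma \ref{cd2}; your matrices and resulting $\alpha$-values agree with those in the paper's Figure \ref{fig:data2}. The only (cosmetic) difference is that you unify the three subcases of (1) with the parameter $c\in\{-1,0,1\}$, whereas the paper works out (1)/(i) and tabulates the rest.
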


\begin{proof}
  We only provide a proof for case (1)/(i), but one can obtain the BV-$\alpha$ values for the other cases by following the same procedure. For case (1)/(i), we have $i=l<j$. By Proposition \ref{prop:projnormal}, the primitive outer normal vector of $F_{l,l}(\1)$ is $\bn_{l,l}=\sm^{n-1}_{l}$ and that of $F_{i,j}(\1)$ is $\bn_{l,j} = -\be^{n-1}_{l,j-1}$. Thus, the MDP of $\ncone(F, \ptes_n(\boldsymbol{1}))$ is
	\[\begin{bmatrix}[1.2]
	    \left\langle \bn_{l,l}, \bn_{l,l} \right\rangle \ & \ \left\langle \bn_{l,l}, \bn_{l,j} \right\rangle \\
	    \left\langle \bn_{l,j}, \bn_{l,l} \right\rangle &  \left\langle \bn_{l,j}, \bn_{l,j} \right\rangle 
\end{bmatrix}
=
	  \begin{bmatrix}[1.2]
\left\langle \sm^{n-1}_{l}, \sm^{n-1}_{l} \right\rangle \ & \ \left\langle \sm^{n-1}_{l}, -\be^{n-1}_{l,j-1} \right\rangle \\
\left\langle -\be^{n-1}_{l,j-1}, \sm^{n-1}_{l} \right\rangle &  \left\langle -\be^{n-1}_{l,j-1}, -\be^{n-1}_{l,j-1} \right\rangle 
\end{bmatrix} = \begin{bmatrix}[1.2]
 n-1 & -1 \\
 -1 & 1 
\end{bmatrix}.\]
Then we compute its inverse, which by Corollary \ref{CMinv-restate} gives the MDP of $\fcone^p(F, \ptes_n(\boldsymbol{1}))$:
\[ \begin{bmatrix}[1.2]
 n-1 & -1 \\
 -1 & 1 
\end{bmatrix}^{-1} = \begin{bmatrix}[1.3] 
    \frac{1}{n-2} & \frac{1}{n-2} \\[1mm]
    \frac{1}{n-2} & \frac{n-1}{n-2} \\[1mm]
\end{bmatrix}.\]
Hence, if we let $\bu_1$ and $\bu_2$ be the primitive generators for the unimodular cone $\fcone^{p}(F,P)$ (with respect to the lattice $\Z^{\binom{n}{2}}/\lin(F)$) and assume $|\bu_1| \le |\bu_2|$, then  
\[\langle \bu_{1}, \bu_{1} \rangle = \dfrac{1}{n-2}, \quad  \langle \bu_{1}, \bu_{2} \rangle = \dfrac{1}{n-2},  \quad \text{ and } \quad \langle \bu_{2}, \bu_{2} \rangle = \dfrac{n-1}{n-2}.\]
Therefore, applying Lemma \ref{cd2}, we obtain the value of $\bvalpha(F,\ptes_{n}(\boldsymbol{1}))$ as shown in the lemma. 

As we mentioned above, we won't provide detailed calculation for the other cases. However, for easy reference, we summarize involved matrices and the $\alpha$-value of each case of this lemma in the table in Figure \ref{fig:data2}. 
\end{proof}

\begin{figure}[t]
 
\begin{tabular}{ | c | c | c | c |}
	\hline 
	\ Cases \	
	& \begin{tabular}{@{}c@{}}MDP of \\ $\ncone(F,\ptes_n(\1))$\end{tabular}
	& \begin{tabular}{@{}c@{}}MDP of \\ $\fcone^p(F,\ptes_n(\1))$\end{tabular}
	& $\alpha$-value 
	\rule{0pt}{1.5\normalbaselineskip} \\[0.7\normalbaselineskip]
 \hline
 (1)/(i)
 & $\begin{bmatrix}[1.2]
	 n-1 & -1 \\
 -1  & 1  
\end{bmatrix}$ & 

$\begin{bmatrix}[1.3]
 \frac{1}{n-2} & \frac{1}{n-2} \\[1mm]
 \frac{1}{n-2}  & \frac{n-1}{n-2}  \\[1mm]
\end{bmatrix}$ 
&
\ \  $\dfrac{1}{4}+\dfrac{1}{12} \biggl{(}\dfrac{n}{n-1}\biggl{)}$ 
\rule{0pt}{2\normalbaselineskip} 
 \\[1.3\normalbaselineskip]
\hline 

(1)/(ii) 
& $\begin{bmatrix}[1.2]
	n-1 & 1 \\
 1  & 1  
\end{bmatrix}$ &
$\begin{bmatrix}[1.3]
 \frac{1}{n-2} & \frac{-1}{n-2} \\[1mm]
 \frac{-1}{n-2}  & \frac{n-1}{n-2}  \\[1mm]
\end{bmatrix}$
&
\ \ $\dfrac{1}{4}-\dfrac{1}{12}\biggl{(}\dfrac{n}{n-1}\biggl{)}$
\rule{0pt}{2\normalbaselineskip} \\[1.3\normalbaselineskip]

\hline
(1)/(iii)
& $\begin{bmatrix}[1.2]
	n-1 &  0  \\
 0  & 1  \\
\end{bmatrix}$ &
$\displaystyle \begin{bmatrix}[1.3]
	\frac{1}{n-1} &  0  \\[1mm]
	0  & 1  
\end{bmatrix}$
&

\ \ $\dfrac{1}{4}$ 
\rule{0pt}{1.8\normalbaselineskip} \\[1.1\normalbaselineskip]
\hline

(2) 
& $\begin{bmatrix}[1.2]
	n-1 & -1 \\
 -1  & n-1  \\
\end{bmatrix}$ &

$\begin{bmatrix}[1.3]
 \frac{n-1}{n(n-2)} & \frac{1}{n(n-2)} \\[1mm]
 \frac{1}{n(n-2)}  & \frac{n-1}{n(n-2)}  \\[1mm]
\end{bmatrix}$
&

\ \ $\dfrac{1}{4}+\dfrac{1}{6(n-1)}$
\rule{0pt}{2\normalbaselineskip} \\[1.3\normalbaselineskip]
\hline

\end{tabular}

\caption{Data for Lemma \ref{alphacd2}}
\label{fig:data2}
\end{figure}

\begin{lem} \label{alphacd3}
  Assume $n \ge 3.$ Suppose $F$ is a codimension $3$ face of $\ptes_{n}(\boldsymbol{1})$, and its two supporting facets are $F_{l,l}(\1), F_{i_{1},j_{1}}(\1)$ and $F_{i_{2},j_{2}}(\1)$ (with $i_1 \le j_1$ and $i_2 \le j_2$). We compute $\bvalpha(F,\ptes_{n}(\boldsymbol{1}))$ for all possible cases below.
\begin{enumerate}[leftmargin=*, label={\rm (\arabic*)}]
    \item Suppose $i_{1} < j_{1}$ and $i_{2} < j_{2}$. There are six subcases.
    \begin{enumerate}[label={\rm (\roman*)}]
        \item If both $(i_{1},j_{1})$ and $(i_{2},j_{2})$ are not on the $l$-th hook, then $\bvalpha(F,\ptes_{n}(\boldsymbol{1}))=\dfrac{1}{8}$.
        \item If one of $(i_{1},j_{1})$ and $(i_{2},j_{2})$ is on the row of the $l$-th hook and the other is not on the $l$-th hook, then $\bvalpha(F,\ptes_{n}(\boldsymbol{1}))=\dfrac{1}{8} + \dfrac{n}{24(n-1)}$. 
	\item If one of $(i_{1},j_{1})$ and $(i_{2},j_{2})$ is on the column of the $l$-th hook and the other is not on the $l$-th hook, then $\bvalpha(F,\ptes_{n}(\boldsymbol{1}))=\dfrac{1}{8} - \dfrac{n}{24(n-1)}$.  
        \item If both of $(i_{1},j_{1})$ and $(i_{2},j_{2})$ are on the row of the $l$-th hook, then $\bvalpha(F,\ptes_{n}(\boldsymbol{1}))=\dfrac{1}{8} + \dfrac{n}{12(n-2)}$.
        \item If one of $(i_{1},j_{1})$ and $(i_{2},j_{2})$ is on the column of the $l$-th hook and the other is on the row of the $l$-th hook, then $\bvalpha(F,\ptes_{n}(\boldsymbol{1}))=\dfrac{1}{8} - \dfrac{1}{12(n-2)}$.
        \item If both of $(i_{1},j_{1})$ and $(i_{2},j_{2})$ are on the column of the $l$-th hook, then $\bvalpha(F,\ptes_{n}(\boldsymbol{1}))=\dfrac{1}{24}$.
    \end{enumerate}
    \item Suppose exactly one of $i_1=j_1$ and $i_2=j_2$ is true. Without loss of generality, we assume $i_1=j_1$ and let $m:=i_1.$ There are four subcases.
        \begin{enumerate}[label={\rm (\roman*)}]
		\item If $(i_{2},j_{2})$ is not on the $l$-th or the $m$-th hook, then $\bvalpha(F,\ptes_{n}(\boldsymbol{1}))=\dfrac{1}{8}+\dfrac{1}{12(n-1)}$.
            \item 
		   If $(i_{2},j_{2})$ is on the row of either the $l$-th hook or the $m$-th hook but is not on the hook of the other, 
		   then $\bvalpha(F,\ptes_{n}(\boldsymbol{1}))=\dfrac{1}{8}+\dfrac{n^{2}+n-3}{24(n^{2}-3n+2)}.$

            \item If $(i_{2},j_{2})$ is on the column of either the $l$-th hook or the $m$-th hook but is not on the hook of the other, 
	      then $\bvalpha(F,\ptes_{n}(\boldsymbol{1}))=\dfrac{1}{8} - \dfrac{n^2 - 3n + 3}{24(n^2-3n+2)}$. 

             \item If $(i_{2},j_{2})$ is on the column of either the $l$-th hook or the $m$-th hook and is on the row of the other hook, then $\bvalpha(F,\ptes_{n}(\boldsymbol{1}))=\dfrac{1}{8}$.
        \end{enumerate}
    \item Suppose $i_1=j_1$ and $i_2=j_2.$ Then $\bvalpha(F,\ptes_{n}(\boldsymbol{1}))=\dfrac{1}{8}+\dfrac{1}{4(n-2)}$.
    \end{enumerate}
Hence, $\bvalpha(F,\ptes_{n}(\boldsymbol{1}))$ is positive for codimension $3$ faces of $\ptes_n(\1)$.
\end{lem}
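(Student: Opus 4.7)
The plan is to follow the same strategy as in the proof of Lemma \ref{alphacd2}. For each of the thirteen subcases, I would identify the primitive outer normal vectors of the three supporting facets $F_{l,l}(\1)$, $F_{i_1,j_1}(\1)$, $F_{i_2,j_2}(\1)$ via Proposition \ref{prop:projnormal}, write down the $3 \times 3$ MDP $C$ of $\ncone(F, \ptes_n(\1))$ using the explicit dot product formulas recorded just above Lemma \ref{alphacd2}, invert $C$ to obtain the MDP $M$ of $\fcone^p(F, \ptes_n(\1))$ by Corollary \ref{CMinv-restate}, and substitute the entries $m_{ij}$ into the formula of Lemma \ref{cd3} to read off $\bvalpha(F, \ptes_n(\1))$.

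The preliminary step is to tabulate every dot product that can occur among the vectors $\bn_{l,l} = \sm_l^{n-1}$ and $\bn_{i,j} = -\be_{i,j-1}^{n-1}$. We already have $\langle \bn_{l,l}, \bn_{l,l} \rangle = n-1$, $\langle \bn_{i,j}, \bn_{i,j} \rangle = 1$ for $i<j$, the three-way classification of $\langle \bn_{l,l}, \bn_{i,j} \rangle \in \{-1, 0, 1\}$, and the obvious $\langle \bn_{i_1,j_1}, \bn_{i_2,j_2} \rangle = \delta_{(i_1,j_1),(i_2,j_2)}$ when both are off-diagonal. For case (2) and case (3) we also need $\langle \bn_{l,l}, \bn_{m,m} \rangle$ for distinct $l < m$; a direct inspection shows that the supports of $\sm_l^{n-1}$ and $\sm_m^{n-1}$ meet only at the position $(l, m-1)$, with opposite signs, giving $\langle \bn_{l,l}, \bn_{m,m} \rangle = -1$.

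With this data, every MDP appearing in the lemma is one of a small list of $3 \times 3$ matrices whose entries are drawn from $\{n-1, \pm 1, 0\}$. I would inverte each one and evaluate Lemma \ref{cd3}, then present the outcome in a table modelled on Figure \ref{fig:data2} (listing $C$, $M=C^{-1}$, and the resulting $\alpha$-value for each of cases (1)(i)--(vi), (2)(i)--(iv) and (3)); only a single representative subcase would be worked through in the text, as was done for Lemma \ref{alphacd2}. Positivity is then checked subcase by subcase: the potentially troublesome ones are (1)(iii), (1)(v), (1)(vi) and (2)(iii), where one estimates $\tfrac{n}{24(n-1)} < \tfrac{1}{16} \le \tfrac{1}{8}$, $\tfrac{1}{12(n-2)} \le \tfrac{1}{12} < \tfrac{1}{8}$, and $\tfrac{n^2-3n+3}{24(n^2-3n+2)} < \tfrac{1}{12} < \tfrac{1}{8}$ for all $n \ge 3$ (case (1)(vi) gives the constant $1/24 > 0$), while the remaining subcases have all nonnegative summands and are manifestly positive.

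The main obstacle is purely bookkeeping rather than conceptual: the case tree branches on whether each of $F_{i_1,j_1}(\1)$ and $F_{i_2,j_2}(\1)$ is diagonal or not, and, when not diagonal, on whether $(i_k,j_k)$ lies on the row, the column, or outside the $l$-th (and, in case (2), the $m$-th) hook. The arithmetic is most delicate in subcases (2)(ii)--(iii), where $C$ carries two independent off-diagonal $\pm 1$ entries, so inversion produces denominators involving $(n-1)(n-2) = n^2-3n+2$; I would double-check these two cases against a direct symbolic computation before stating the final clean forms.
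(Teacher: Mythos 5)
Your proposal follows exactly the paper's method: compute the MDP of the normal cone from the facet normal vectors, invert it via Corollary \ref{CMinv-restate}, and substitute the resulting dot products into Lemma \ref{cd3}, tabulating all thirteen subcases with one worked representative; your auxiliary computation $\langle \sm_l^{n-1}, \sm_m^{n-1}\rangle = -1$ for $l \neq m$ and your positivity estimates for the subcases with negative terms are correct. The only point you omit, which the paper explicitly flags, is that for $n=3$ the subcases (1)(iv)--(vi) and (3) do not occur (the corresponding hyperplane intersections create a zero row, so by Proposition \ref{prop:ptes} they do not define faces), which matters because in those subcases the matrix $C$ is singular at $n=3$ and the factor $n-3$ appears in the denominators of $C^{-1}$.
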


\begin{proof}
  We follow the same procedure as in the proof for case (1)/(i) of Lemma \ref{alphacd2} to obtain formulas for $\bvalpha(F,\ptes_{n}(\boldsymbol{1}))$ for all cases. Please refer to tables in Figure \ref{fig:data3} for all the matrices involved in the calculation. (We did not include $\alpha$-values in these tables so that they would fit into one single page.) We also remark that when $n=3$, cases (1)/(iv)(v)(vi) and case (3) won't occur, so $(n-3)$ occurring in the denominators of the MDP of $\fcone^p(F, \ptes_n(\1))$ in these cases is not a concern.
\end{proof}

\afterpage{
  \clearpage
\begin{landscape}
  \begin{figure} 

\begin{tabular}{ | c | c | c | }
	\hline 
	\ Cases \	
	& \begin{tabular}{@{}c@{}}MDP of \\ $\ncone(F,\ptes_n(\1))$\end{tabular}
	& \begin{tabular}{@{}c@{}}MDP of \\ $\fcone^p(F,\ptes_n(\1))$\end{tabular}

	\rule{0pt}{1.5\normalbaselineskip} \\[0.7\normalbaselineskip]
\hline
(1)/(i) & 
$\begin{bmatrix}[1.2]
n-1 & 0 & 0 \\
0  & 1  & 0  \\
0  & 0  & 1 
\end{bmatrix}$ & 
$\begin{bmatrix}[1.2]
  \frac{1}{n-1} & 0 & 0 \\[1mm]
0  & 1  & 0  \\
0  & 0  & 1  
\end{bmatrix} $ 

\rule{0pt}{2.2\normalbaselineskip} \\[1.5\normalbaselineskip]
\hline
(1)/(ii) & 
$\begin{bmatrix}[1.2]
n-1 & -1 & 0 \\
-1  & 1  & 0  \\
0  & 0  & 1  
\end{bmatrix}$ & 

$\begin{bmatrix}[1.2]
  \frac{1}{n-2} & \frac{1}{n-2} & 0 \\[1mm]
\frac{1}{n-2}  & \frac{n-1}{n-2}  & 0  \\[1mm]
0  & 0  & 1   
\end{bmatrix} $ 

\rule{0pt}{2.5\normalbaselineskip} \\[1.8\normalbaselineskip]
\hline
(1)/(iii) &
$\begin{bmatrix}[1.2]
n-1 & 1 & 0 \\
1  & 1  & 0  \\
0  & 0  & 1  
\end{bmatrix}$ & 
$\begin{bmatrix}[1.2]
  \frac{1}{n-2} & \frac{-1}{n-2} & 0 \\[1mm]
\frac{-1}{n-2}  & \frac{n-1}{n-2}  & 0  \\[1mm]
0  & 0  & 1  
\end{bmatrix} $ 

\rule{0pt}{2.5\normalbaselineskip} \\[1.8\normalbaselineskip]
\hline
(1)/(iv)&
$\begin{bmatrix}[1.2]
n-1 & -1 & -1 \\
-1  & 1  & 0  \\
-1  & 0  & 1  
\end{bmatrix}$ & 
$\begin{bmatrix}[1.2]
  \frac{1}{n-3} & \frac{1}{n-3} & \frac{1}{n-3} \\[1mm]
  \frac{1}{n-3}  & \frac{n-2}{n-3}  & \frac{1}{n-3}  \\[1mm]
  \frac{1}{n-3}  & \frac{1}{n-3}  & \frac{n-2}{n-3}  \\[1mm] 
\end{bmatrix} $ 

\rule{0pt}{2.5\normalbaselineskip} \\[1.8\normalbaselineskip]
\hline
(1)/(v)& 
$\begin{bmatrix}[1.2]
n-1 & -1 & 1 \\
-1  & 1  & 0  \\
1  & 0  & 1  
\end{bmatrix}$ & 
$\begin{bmatrix}[1.2]
  \frac{1}{n-3} & \frac{1}{n-3} & \frac{-1}{n-3} \\[1mm]
  \frac{1}{n-3}  & \frac{n-2}{n-3}  & \frac{-1}{n-3}  \\[1mm]
  \frac{-1}{n-3}  & \frac{-1}{n-3}  & \frac{n-2}{n-3}  \\[1mm] 
\end{bmatrix} $ 

\rule{0pt}{2.5\normalbaselineskip} \\[1.8\normalbaselineskip]
\hline
(1)/(vi) &
$\begin{bmatrix}[1.2]
n-1 & 1 & 1 \\
1  & 1  & 0  \\
1  & 0  & 1  
\end{bmatrix}$ & 
$\begin{bmatrix}[1.2]
  \frac{1}{n-3} & \frac{-1}{n-3} & \frac{-1}{n-3} \\[1mm]
  \frac{-1}{n-3}  & \frac{n-2}{n-3}  & \frac{1}{n-3}  \\[1mm]
  \frac{-1}{n-3}  & \frac{1}{n-3}  & \frac{n-2}{n-3} \\[1mm] 
\end{bmatrix} $ 

\rule{0pt}{2.5\normalbaselineskip} \\[1.8\normalbaselineskip]
\hline
\end{tabular}
\quad
\begin{tabular}{ | c | c | c |}
	\hline 
	\ Cases \	
	& \begin{tabular}{@{}c@{}}MDP of \\ $\ncone(F,\ptes_n(\1))$\end{tabular}
	& \begin{tabular}{@{}c@{}}MDP of \\ $\fcone^p(F,\ptes_n(\1))$\end{tabular}
	\rule{0pt}{1.5\normalbaselineskip} \\[0.7\normalbaselineskip]
\hline
(2)/(i) & 
$\begin{bmatrix}[1.2]
   n-1 & -1 & 0 \\
   -1  & n-1& 0\\
   0   & 0 & 1 
  \end{bmatrix}$& 
  $\begin{bmatrix}[1.2]
    \frac{n-1}{n(n-2)} & \frac{1}{n(n-2)} & 0\\[1mm]
    \frac{1}{n(n-2)} & \frac{n-1}{n(n-2)} & 0\\[1mm]
  0 & 0 & 1
  \end{bmatrix}$ 
  \rule{0pt}{2.5\normalbaselineskip} \\[1.8\normalbaselineskip]
\hline
(2)/(ii) &
$\begin{bmatrix}[1.2]
   n-1 & -1 & 0 \\
   -1  & n-1& -1\\
   0   & -1 & 1 
  \end{bmatrix}$& 
  $\begin{bmatrix}[1.2]
    \frac{n-2}{n^{2} -3n +1} & \frac{1}{n^{2} -3n +1} & \frac{1}{n^{2} -3n +1}\\[1mm]
    \frac{1}{n^{2} -3n +1}   & \frac{n-1}{n^{2} -3n +1} & \frac{n-1}{n^{2} -3n +1}\\[1mm]
    \frac{1}{n^{2} -3n +1}& \frac{n-1}{n^{2} -3n +1} & \frac{n^{2}-2n}{n^{2} -3n +1}\\[1mm]
  \end{bmatrix}$
  \rule{0pt}{2.5\normalbaselineskip} \\[1.8\normalbaselineskip]
\hline
(2)/(iii) &
$\begin{bmatrix}[1.2]
   n-1 & -1 & 0 \\
   -1  & n-1& 1\\
   0   & 1 & 1 
  \end{bmatrix}$& 
  $\begin{bmatrix}[1.2]
    \frac{n-2}{n^{2} -3n +1} & \frac{1}{n^{2} -3n +1} & \frac{-1}{n^{2} -3n +1}\\[1mm]
    \frac{1}{n^{2} -3n +1}   & \frac{n-1}{n^{2} -3n +1} & \frac{-n+1}{n^{2} -3n +1}\\[1mm]
    \frac{-1}{n^{2} -3n +1}& \frac{-n+1}{n^{2} -3n +1} & \frac{n^{2}-2n}{n^{2} -3n +1}\\[1mm]
  \end{bmatrix}$
  \rule{0pt}{2.5\normalbaselineskip} \\[1.8\normalbaselineskip]
\hline

(2)/(iv) &
$\begin{bmatrix}[1.2]
   n-1 & -1 & -1 \\
   -1  & n-1& 1\\
   -1   & 1 & 1 
  \end{bmatrix}$& 
  $\begin{bmatrix}[1.2]
    \frac{1}{n-2} & 0 & \frac{1}{n-2}\\[1mm]
    0  & \frac{1}{n-2} & \frac{-1}{n-2}\\[1mm]
    \frac{1}{n-2} & \frac{-1}{n-2} & \frac{n}{n-2} \\[1mm]
  \end{bmatrix}$ 
  \rule{0pt}{2.5\normalbaselineskip} \\[1.8\normalbaselineskip]
\hline
(3) & 
$\begin{bmatrix}[1.2]
  n-1 & -1 & -1\\
  -1  & n-1& -1\\
  -1  & -1 &n-1
 \end{bmatrix}$ &
 $\begin{bmatrix}[1.2]
  \frac{n-2}{n(n-3)} & \frac{1}{n(n-3)} & \frac{1}{n(n-3)}\\[1mm]
  \frac{1}{n(n-3)} & \frac{n-2}{n(n-3)} & \frac{1}{n(n-3)}\\[1mm]
  \frac{1}{n(n-3)} & \frac{1}{n(n-3)} & \frac{n-2}{n(n-3)}\\[1mm]
 \end{bmatrix}$ 
 \rule{0pt}{2.5\normalbaselineskip} \\[1.8\normalbaselineskip]
\hline
\end{tabular}
\caption{Matrices for Lemma \ref{alphacd3}}
\label{fig:data3}
\end{figure}
\end{landscape}
}

We are now ready to prove Theorem \ref{ptes-positive}.

\begin{proof}[Proof of Theorem \ref{ptes-positive}]
The first conclusion follows from Lemmas \ref{alphacd2} and \ref{alphacd3}.
The second conclusion follows from the first conclusion and Equation (\ref{maccoeffi}). 
\end{proof}

\subsection{Corollaries to Theorem \ref{ptes-positive}}

Using the Reduction Theorem (Theorem \ref{reduction}), we can generalize our BV-$\alpha$-positivity results for codimension $2$ and $3$ faces of $\ptes_{n}(\boldsymbol{1})$ to any deformations of $\ptes_{n}(\boldsymbol{1})$:

\begin{cor} \label{maincor}
Let $P$ be a $d$-dimensional integral polytope that is a deformation of $\ptes_{n}(\boldsymbol{1})$. Then the following statements are true:
\begin{enumerate}[label={\rm (\arabic*)}]
  \item If $d=\dim(\ptes_{n}(\boldsymbol{1}))=\binom{n}{2}$, then all the codimension $2$ and $3$ faces of $P$ have positive BV-$\alpha$ values. Therefore, $e_{d-2}(P)>0$ and $e_{d-3}(P)>0$.
  \item If $d=\dim(\ptes_{n}(\boldsymbol{1}))-1=\binom{n}{2}-1$, then all the codimension $2$ faces of $P$ have positive BV-$\alpha$ values. Therefore, $e_{d-2}(P)>0$.
\end{enumerate}
\end{cor}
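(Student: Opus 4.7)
The plan is to derive Corollary \ref{maincor} as a direct assembly of Theorem \ref{ptes-positive}, the Reduction Theorem (Theorem \ref{reduction}), and the weighted-sum formula \eqref{maccoeffi}, with careful bookkeeping of face dimensions versus codimensions. To set up the applicability: by Proposition \ref{prop:ptes}\eqref{itm:pint}, $\ptes_n(\1)$ is integral; by hypothesis $P$ is integral; and by Definition \ref{combisoeq}, $P$ and $\ptes_n(\1)$ share the ambient space, so both live in the same $\R^m$ as required by Theorem \ref{reduction}.

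For part (1), since $d = \binom{n}{2} = \dim(\ptes_n(\1))$, the codimension-$2$ and codimension-$3$ faces of $P$ have dimensions $\binom{n}{2}-2$ and $\binom{n}{2}-3$, which are exactly the dimensions of the codimension-$2$ and codimension-$3$ faces of $\ptes_n(\1)$. Theorem \ref{ptes-positive} gives $\bvalpha(F, \ptes_n(\1)) > 0$ on all such faces of $\ptes_n(\1)$. Applying Theorem \ref{reduction} with $k = \binom{n}{2}-2$ and then with $k = \binom{n}{2}-3$ transfers this positivity to all $(d-2)$- and $(d-3)$-dimensional faces of $P$, i.e., to all codimension-$2$ and codimension-$3$ faces of $P$. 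Summing over the relevant face dimension in \eqref{maccoeffi} then yields $e_{d-2}(P) > 0$ and $e_{d-3}(P) > 0$.

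For part (2), the key (and only) subtlety is that $\dim P = \binom{n}{2}-1$ is one less than $\dim \ptes_n(\1)$, so a codimension-$2$ face of $P$ has dimension $d-2 = \binom{n}{2}-3$, which matches the dimension of a codimension-$3$ face of $\ptes_n(\1)$ rather than a codimension-$2$ face. Theorem \ref{ptes-positive} provides BV-$\alpha$-positivity on all $(\binom{n}{2}-3)$-dimensional faces of $\ptes_n(\1)$, so applying Theorem \ref{reduction} with $k = \binom{n}{2}-3$ transfers this to all $(d-2)$-dimensional faces of $P$, i.e., to all codimension-$2$ faces of $P$. Then \eqref{maccoeffi} gives $e_{d-2}(P) > 0$.

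There is no real obstacle to the argument—it is a pure assembly of already-proved results. The only point worth flagging explicitly is that the Reduction Theorem preserves \emph{face dimension}, not codimension; this is why part (2) invokes the codimension-$3$ portion of Theorem \ref{ptes-positive} rather than the codimension-$2$ portion. Once the dimension matching is performed, the two cases are disposed of by identical two-line arguments.
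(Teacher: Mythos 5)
Your proof is correct and follows essentially the same route as the paper's: both parts combine the codimension-$2$ and $3$ positivity results for $\ptes_n(\1)$ with the Reduction Theorem and Equation \eqref{maccoeffi}, and your explicit remark that the Reduction Theorem preserves face \emph{dimension} rather than codimension---so that part (2) must invoke the codimension-$3$ result for $\ptes_n(\1)$---is exactly the (implicit) content of the paper's argument. No discrepancies to report.
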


\begin{proof}If $d=\dim(\ptes_{n}(\boldsymbol{1}))$, then by Lemmas \ref{alphacd2} and \ref{alphacd3} and Theorem \ref{reduction}, we have that $\bvalpha(F,P)>0$ for every face $F$ of $P$ that has dimension $d-2$ or $d-3$. Therefore, the coefficients $e_{d-2}(P)$ and $e_{d-3}(P)$ are positive by Equation (\ref{maccoeffi}). 
	
	Similarly, if $d=\dim(\ptes_{n}(\boldsymbol{1}))-1$, Lemma \ref{alphacd3} and Theorem \ref{reduction} imply that $\bvalpha(F,P)>0$ for all $(d-2)$-dimensional faces $F$ of $P$. Therefore, we have that $e_{d-2}(P)>0$ by Equation (\ref{maccoeffi}).
\end{proof}

\begin{proof}[Proof of Corollary \ref{teslerdef-positiv}]
	Recall that $\ptes_{n}(\boldsymbol{1}) = \psi_{\diag}\left( \tes_n(\boldsymbol{1})  \right)$, where by Lemma \ref{projmap} the map $\psi_{\diag}$ induces a unimodular transformation from $\H_n(\boldsymbol{1})$ to $\U(n-1).$ Then the conclusions follow from Corollary \ref{maincor}, Lemma \ref{unimodulardef}, and the fact that unimodular transformations are invertible and preserve Ehrhart polynomials.
\end{proof}

\begin{proof}[Proof of Corollary \ref{cor:tesler}] Suppose the first $p$ entries of $\ba$ are zero and $a_{p+1} > 0,$ and let $\bb = (a_{p+1}, a_{p+2}, \dots, a_n)$. By Lemma \ref{lem:Tes-dim}, the Tesler polytope $\tes_n(\ba)$ is isomorphic to the Tesler polytope $\tes_{n-p}(\bb)$ which has the same dimension as $\tes_{n-p}(\1).$ It is a classical result that $\tes_{n-p}(\bb)$ is a deformation of $\tes_{n-p}(\1).$ (See \cite{de2010triangulations} or \cite[Section 4]{teslerv1}.)
  Therefore, the conclusion follows from Part (1) of Corollary \ref{teslerdef-positiv}.
\end{proof}

\bibliographystyle{abbrv}
\bibliography{ref}

\begin{thebibliography}{10}

\bibitem{armstrong2012combinatorics}
D.~Armstrong, A.~Garsia, J.~Haglund, B.~Rhoades, and B.~Sagan.
\newblock Combinatorics of tesler matrices in the theory of parking functions
  and diagonal harmonics.
\newblock {\em Journal of Combinatorics}, 3(3):451--494, 2012.

\bibitem{barvinok2008integer}
A.~Barvinok.
\newblock {\em Integer points in polyhedra}, volume 452.
\newblock European Mathematical Society, 2008.

\bibitem{berline2007local}
N.~Berline and M.~Vergne.
\newblock Local euler--maclaurin formula for polytopes.
\newblock {\em Moscow Mathematical Journal}, 7(3):355--386, 2007.

\bibitem{canfield2007asymptotic}
E.~R. Canfield and B.~D. McKay.
\newblock The asymptotic volume of the birkhoff polytope.
\newblock {\em arXiv preprint arXiv:0705.2422}, 2007.

\bibitem{Castillo2015v1}
F.~Castillo and F.~Liu.
\newblock Berline-vergne valuation and generalized permutohedra.
\newblock {\em arXiv preprint arXiv:1509.07884v1}, 2015.

\bibitem{castillo2018berline}
F.~Castillo and F.~Liu.
\newblock Berline--vergne valuation and generalized permutohedra.
\newblock {\em Discrete \& Computational Geometry}, 60(4):885--908, 2018.

\bibitem{chan2000volume}
C.~S. Chan, D.~P. Robbins, and D.~S. Yuen.
\newblock On the volume of a certain polytope.
\newblock {\em Experimental Mathematics}, 9(1):91--99, 2000.

\bibitem{corteel2017volumes}
S.~Corteel, J.~S. Kim, and K.~M{\'e}sz{\'a}ros.
\newblock Volumes of generalized chan--robbins--yuen polytopes.
\newblock {\em Discrete \& Computational Geometry}, pages 1--21, 2017.

\bibitem{danilov1978geometry}
V.~I. Danilov.
\newblock The geometry of toric varieties.
\newblock {\em Russian Mathematical Surveys}, 33(2):97--154, 1978.

\bibitem{de2009generating}
J.~A. De~Loera, F.~Liu, and R.~Yoshida.
\newblock A generating function for all semi-magic squares and the volume of
  the birkhoff polytope.
\newblock {\em Journal of Algebraic Combinatorics}, 30(1):113--139, 2009.

\bibitem{de2010triangulations}
J.~A. De~Loera, J.~Rambau, and F.~Santos.
\newblock {\em Triangulations Structures for algorithms and applications}.
\newblock Springer, 2010.

\bibitem{ehrhart1967probleme}
E.~Ehrhart.
\newblock Sur un probleme de g{\'e}om{\'e}trie diophantienne lin{\'e}aire ii.
\newblock {\em J. reine angew. Math}, 227(25):C49, 1967.

\bibitem{garsia2014constant}
A.~Garsia, J.~Haglund, and G.~Xin.
\newblock Constant term methods in the theory of tesler matrices and macdonald
  polynomial operators.
\newblock {\em Annals of Combinatorics}, 18(1):83--109, 2014.

\bibitem{gorsky2015refined}
E.~Gorsky and A.~Negu{\c{t}}.
\newblock Refined knot invariants and hilbert schemes.
\newblock {\em Journal de math{\'e}matiques pures et appliqu{\'e}es},
  104(3):403--435, 2015.

\bibitem{haglund2011polynomial}
J.~Haglund.
\newblock A polynomial expression for the hilbert series of the quotient ring
  of diagonal coinvariants.
\newblock {\em Advances in Mathematics}, 227(5):2092--2106, 2011.

\bibitem{haglund2018delta}
J.~Haglund, J.~Remmel, and A.~Wilson.
\newblock The delta conjecture.
\newblock {\em Transactions of the American Mathematical Society},
  370(6):4029--4057, 2018.

\bibitem{thesis}
Y.~Lee.
\newblock On ehrhart positivity and deformations of tesler polytope.
\newblock 2020.

\bibitem{teslerv1}
Y.~Lee and F.~Liu.
\newblock On ehrhart positivity of tesler polytopes and their deformations.
\newblock {\em arXiv preprint arXiv:1911.06291v1}, 2019.

\bibitem{liu2019positivity}
F.~Liu.
\newblock On positivity of ehrhart polynomials.
\newblock In {\em Recent Trends in Algebraic Combinatorics}, pages 189--237.
  Springer, 2019.

\bibitem{mcmullen1993valuations}
P.~McMullen.
\newblock Valuations and dissections. handbook of convex geometry, vol. a, b,
  933--988, 1993.

\bibitem{Meszaros2017}
K.~M{\'{e}}sz{\'{a}}ros, A.~H. Morales, and B.~Rhoades.
\newblock {The polytope of Tesler matrices}.
\newblock {\em Selecta Mathematica, New Series}, 23(1):425--454, 2017.

\bibitem{Moralesconjecture}
A.~H. Morales.
\newblock {E}hrhart polynomials of examples of flow polytopes.
\newblock \url{https://sites.google.com/site/flowpolytopes/ehrhart}.

\bibitem{pak2000four}
I.~Pak.
\newblock Four questions on birkhoff polytope.
\newblock {\em Annals of Combinatorics}, 4(1):83--90, 2000.

\bibitem{pommersheim2004cycles}
J.~Pommersheim and H.~Thomas.
\newblock Cycles representing the todd class of a toric variety.
\newblock {\em Journal of the American Mathematical Society}, 17(4):983--994,
  2004.

\bibitem{postnikov2008faces}
A.~Postnikov, V.~Reiner, and L.~Williams.
\newblock Faces of generalized permutohedra.
\newblock {\em Doc. Math}, 13(207-273):51, 2008.

\bibitem{ring2017local}
M.~H. Ring and A.~Sch{\"u}rmann.
\newblock Local formulas for ehrhart coefficients from lattice tiles.
\newblock {\em Beitr{\"a}ge zur Algebra und Geometrie/Contributions to Algebra
  and Geometry}, pages 1--29, 2017.

\bibitem{Schrijver86}
A.~Schrijver.
\newblock {\em Theory of Linear and Integer Programming}.
\newblock John Wiley \& Sons, Chichester, 1986.

\bibitem{wilson2017weighted}
A.~T. Wilson.
\newblock A weighted sum over generalized tesler matrices.
\newblock {\em Journal of Algebraic Combinatorics}, 45(3):825--855, 2017.

\bibitem{zeilberger1999proof}
D.~Zeilberger.
\newblock Proof of a conjecture of chan, robbins, and yuen.
\newblock {\em Electron. Trans. Numer. Anal}, 9(147-148):1--2, 1999.

\bibitem{ziegler2012lectures}
G.~M. Ziegler.
\newblock {\em Lectures on polytopes}, volume 152.
\newblock Springer Science \& Business Media, 2012.

\end{thebibliography}

\end{document}